\newcommand{\T}{\mathbb{T}}
\newcommand{\e}{\varepsilon}
\newcommand{\F}{\mathcal{F}}
\DeclareMathOperator\USC{USC}
\DeclareMathOperator\LSC{LSC}
\DeclareMathOperator\supp{supp}
\DeclareMathOperator\Lip{Lip}
\DeclareMathOperator\SC{SC}
\def\R{\mathbb{R}}
\def\Z{\mathbb{Z}}
\def\N{\mathbb{N}}
\def\cF{\mathcal{F}_K}
\def\bye{\end{document}}
\def\by{\end{proof}\bye}
\def\hello{\begin{document}}
\def\fr{\frac} 
\def\disp{\displaystyle}  
\def\ga{\alpha}     
\def\go{\omega}
\def\gep{\varepsilon}      
\def\ep{\gep}    
\def\mid{\,:\,}   
\def\gb{\beta} 
\def\gam{\gamma}
\def\gd{\delta}
\def\gz{\zeta} 
\def\gth{\theta}   
\def\gk{\kappa} 
\def\gl{\lambda}
\def\gL{\Lambda}
\def\gs{\sigma}   
\def\gf{\varphi}                  
\def\tim{\times}                        
\def\aln{&\,}
\def\ol{\overline}
\def\ul{\underline}           
\def\pl{\partial}
\def\hb{\text}                
\def\Int{\mathop{\text{int}}}
\def\gG{\varGamma}
\def\lan{\langle}
\def\ran{\rangle}
\def\cD{\mathcal{D}}
\def\cB{\mathcal{B}}
\def\bcases{\begin{cases}}
\def\ecases{\end{cases}}
\def\balns{\begin{align*}}
\def\ealns{\end{align*}}
\def\balnd{\begin{aligned}}
\def\ealnd{\end{aligned}}
\def\1{\mathbf{1}}
\def\bproof{\begin{proof}}
\def\eproof{\end{proof}}
\def\red#1{\textcolor{red}{#1}}
\def\blu#1{\textcolor{blue}{#1}}
\def\beq{\begin{equation}}
\def\eeq{\end{equation}}
\def\bthm{\begin{theorem}}
\def\ethm{\end{theorem}}
\def\bproof{\begin{proof}}
\def\eproof{\end{proof}}
\newcommand{\lbar}[1]{\mkern 1.9mu\overline{\mkern-1.9mu#1\mkern-0.1mu}
\mkern 0.1mu}
\newcommand\coolrightbrace[2]{%
\left.\vphantom{\begin{matrix} #1 \end{matrix}}\right\}#2}
\def\eqr#1{\eqref{#1}}
\def\mrm#1{\mathrm{#1}}
\def\bmat{\begin{pmatrix}}
\def\emat{\end{pmatrix}}
\newcommand{\parts}{\mathscr{P}}
\newcommand{\Pmo}{\mathcal{P}^-_{1}}
\newcommand{\Ppo}{\mathcal{P}^+_{1}}
\newcommand{\Pmk}{\mathcal{P}^-_{k}}
\newcommand{\Ppk}{\mathcal{P}^+_{k}}
\newcommand{\II}{\mathcal{I}}
\def\diag{\operatorname{diag}}
\def\rT{\mathrm{T}}
\newcommand{\Rd}{{\mathbb R}^d}
\newcommand{\leb}{\mathcal L}
\def\IN{\text{ in } }
\def\AND{\text{ and }}
\def\FOR{\text{ for }}
\def\FORALL{\text{ for all }}
\def\IF{\hb{ if }}\def\ON{\hb{ on }}
\def\I{\mathbb{I}}
\def\du#1{\lan#1\ran}
\def\bald{\begin{aligned}}
\def\eald{\end{aligned}}
\def\stm{\setminus}
\def\t{\tau}
\def\SC-{\operatorname{SC}^-}
\def\sc-{\operatorname{sc}^-}
\def\lip{\operatorname{lip}}
\def\B{\operatorname{\mathbb{B}}}
\def\PV{\mathrm{P.V.}}
\def\str{\rule{0pt}{7pt}}
\def\Sp{\operatorname{Sp}}
\def\P{\operatorname{\mathbb{P}}}
\def\erf{\eqref}
\def\cG{\mathcal{G}_K}
\def\cGprime{\mathcal{G}'_K}
\def\hx{\hat x} \def\hy{\hat y}
\def\pr{\,^\prime}
\def\gX{\Xi}
\def\cV{\mathcal{V}}
\def\AE{\text{ a.e. }}
\def\0{\mathbf{0}}
\DeclareMathOperator\Bd{B}
\DeclareMathOperator\C{C}
\renewcommand{\labelitemi}{$\circ$}
\newcommand{\Ham}{\mathscr{H}(\alpha_0,\alpha_1,\gamma)}
\newcommand{\Hamper}{\mathscr{H}_{per}(\alpha_0,\alpha_1,\gamma)}
\newcommand{\NLO }{(-\Delta)^{\alpha/2}}
\newcommand{\M}{{\T^d}}
\newcommand{\Mis}{\mathfrak{M}}
\newcommand{\Sol}{\mathcal{S}}
\newcommand{\sol}{\mathfrak{S}}
\newcommand{\TM}{{\T^d\times\R^d}}
\newcommand{\TR}{{\R^d\times\R^d}}
\newcommand{\cchi}{\mbox{\large $\chi$}}
\newcommand{\eps}{\varepsilon}
\newcommand{\Dr}[1]{\mbox{\rm #1}}
\theoremstyle{definition}
\newtheorem{definition}{Definition}
\newtheorem{remark}[definition]{Remark}
\newtheorem{notation}[definition]{Notation}
\theoremstyle{plain}
\newtheorem{theorem}[definition]{Theorem}
\newtheorem{corollary}[definition]{Corollary}
\newtheorem{lemma}[definition]{Lemma}
\newtheorem{proposition}[definition]{Proposition}
\newcommand{\bluee}{\color{blue}}
\newcommand{\redd}{\color{red}}
\title[The vanishing discount problem for nonlocal HJ equations]{The vanishing discount problem for \\ 
nonlocal Hamilton-Jacobi equations}
\author[A.\ Davini \and H. Ishii]{Andrea Davini \and Hitoshi Ishii}
\address[\textsc{Hitoshi Ishii}]{Institute for Mathematics and Computer Science\newline
\indent Tsuda University  \newline
\indent   2-1-1 Tsuda-machi, Kodaira-shi, Tokyo, 187-8577 Japan.}
\email{hitoshi.ishii@waseda.jp}
\address[\textsc{Andrea Davini}]{Dipartimento di Matematica\newline \indent {Sapienza} Universit\`a di
  Roma\newline \indent P.le Aldo Moro 2, 00185 Roma Italy}
\email{davini@mat.uniroma1.it}
\begin{document}
\maketitle
\begin{abstract}
We establish a convergence result for the vanishing discount problem in the context of nonlocal HJ equations. We consider a fairly general class of discounted first-order and convex HJ equations which incorporate an integro-differential operator posed on the $d$-dimensional torus, and we show that the solutions converge to a specific critical solution as the discount factor tends to zero. 
Our approach relies on duality techniques for nonlocal convex HJ equations, building upon Hahn-Banach separation theorems to develop a generalized notion of Mather measure. 
The results are applied to a specific class of convex and superlinear Hamiltonians.
\end{abstract}

\maketitle
\section*{Introduction}

In this paper we are concerned with asymptotic behavior, as the discount factor $\gl\to 0^+$, of the solutions of an equation of the form 
\begin{equation}\label{intro eq discount}
\gl u-\II u+H (x,Du)=c \ \ \ \IN \T^d \tag{E$_\lambda$},
\end{equation}
where the Hamiltonian $H\in\C(\TM)$ is a convex in the gradient variable, $\II$ is a nonlocal operator in the 
{\em L\'evy–Ito} form, and $c$ is a real constant, hereafter termed {\em critical}, which can be characterized as the unique constant for which the following equation 
\begin{equation}\label{intro eq critical}
-\II u+H (x,Du)=c \ \ \ \IN \T^d \tag{E$_0$}
\end{equation}
may admit solutions. Here and throughout the paper, (sub-, super-) solutions are always meant in the viscosity sense, see  
\cite{users,barles,bardi}, i.e., by making use of bounded test functions with bounded derivatives up to second-order, which touch $u$ from above (subsolution) or from below (supersolution) somewhere and that satisfy \eqref{intro eq discount}, for $\gl\geq 0$, with the proper inequality at the touching point, see Section \ref{sec.nonlocal HJ} for 
more details. The nonlocal operator $\II$ is defined on such a class of test functions 
$\varphi$ as follows 
\[ 
\II \varphi (x):=\int_{\R^d}\big(\varphi (x+j(x,z))-\varphi (x)-\1_B(z)\,\du{j(x,z),D\varphi (x)}\,\big)\,\nu(dz),\quad x\in\M,
\]
where $\1_B$ is the characteristic function of the open unit ball $B$ centered at 0, and $j$ and $\nu$ are a 
{\em jump function} and a {\em L\'evy type measure}, respectively, satisfying suitable assumptions, see Section \ref{sec.notation}. An example of nonlocal operator of this form is the classical fractional Laplacian of order $s\in (0,2)$, corresponding to the case when $j(x,z)=z$ and $\nu(dz)=|z|^{-(d+s)}dz$, up to a renormalization constant. 

Our main result is the following.

\begin{theorem}\label{intro thm main}
Let $H\in\C(\TM)$ be a convex (in the momentum variable) Hamiltonian such that solutions to the equation \eqref{intro eq discount} exist, for every $\gl\in [0,1)$, and they all are $\kappa$-Lipschitz, for some constant $\kappa$ independent of $\gl$. Then the equation \eqref{intro eq discount} admits a unique continuous solution $u_\gl$ for each $\gl\in (0,1)$, and the family $\{u_\lambda\mid\gl\in (0,1)\}$ uniformly converges on $\M$, as $\gl\to 0^+$, to a distinguished solution $u_0$ of \eqref{intro eq critical}. 
\end{theorem}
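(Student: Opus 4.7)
The uniqueness of $u_\lambda$ for $\lambda\in(0,1)$ I would obtain from the standard comparison principle for nonlocal discounted HJ equations: if $u$ and $v$ are respectively a bounded sub- and supersolution of \eqref{intro eq discount}, a doubling-of-variables argument, adapted to handle the L\'evy--It\^o nonlocal term, gives $\lambda\,\sup_\M(u-v)\le 0$, hence uniqueness. For the initial extraction of a limit, the equi-Lipschitz hypothesis together with the bound $\lambda\|u_\lambda\|_\infty\le|c|+\|H(\cdot,0)\|_\infty+C_\kappa$ (obtained by evaluating \eqref{intro eq discount} at a maximum and a minimum of $u_\lambda$) shows that the shifted family $\{u_\lambda-\min_\M u_\lambda\}$ is uniformly bounded and equi-Lipschitz. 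By Arzel\`a--Ascoli and stability of viscosity solutions under uniform convergence, every sequence $\lambda_n\to 0^+$ admits a subsequence along which $u_{\lambda_n}$ converges uniformly on $\M$ to a solution $u_0$ of \eqref{intro eq critical}.

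The heart of the proof is to show that this limit does not depend on the chosen subsequence. I would follow the variational selection strategy of Davini--Fathi--Iturriaga--Zavidovique, adapted to the nonlocal setting via the generalized Mather measures announced in the abstract. The plan is to introduce, by Hahn--Banach separation applied to the convex cone of admissible critical subsolutions inside a suitable Banach space of test functions, a nonempty set $\mathcal{M}_0$ of Radon probability measures on $\M$ characterized by
\[
\int_{\M}\big(-\II\varphi(x)+H(x,D\varphi(x))-c\big)\,d\mu(x)\ge 0
\]
for every admissible $\varphi$. Testing \eqref{intro eq discount} against $\mu\in\mathcal{M}_0$, using convexity of $H(x,\cdot)$ together with the above inequality, I expect to derive $\lambda\int u_\lambda\,d\mu\le\lambda\int w\,d\mu$ for every bounded critical subsolution $w$, and hence in the limit $\int u_0\,d\mu\le\int w\,d\mu$ for all such $w$. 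This identifies $u_0$ as the distinguished critical solution that is pointwise maximal among critical subsolutions $u$ satisfying $\int u\,d\mu\le\int w\,d\mu$ for every $\mu\in\mathcal{M}_0$ and every critical subsolution $w$; such a characterization is subsequence-independent, and a standard argument then upgrades subsequential convergence to convergence of the full family.

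The principal obstacle I anticipate is the construction and exploitation of the Mather measures themselves in the nonlocal framework. The delicate points are: (a) choosing the ambient space of test functions $\varphi$ in such a way that $\II\varphi$ is well defined and continuous on $\M$, which requires $\varphi\in\C^2$ with second derivatives controlled in a manner compatible with the singularity of $\nu$ near the origin, so that the functional $\varphi\mapsto\int(-\II\varphi+H(\cdot,D\varphi)-c)\,d\mu$ is linear and continuous and Hahn--Banach applies; (b) transferring the viscosity critical subsolution inequality into an integrated inequality against $\mu$, which in the nonlocal context typically requires a sup-convolution approximation together with a Jensen-type step that simultaneously respects the convexity of $H(x,\cdot)$ and the linearity of $\II$; (c) controlling the error term $\lambda\int(-\II u_\lambda)\,d\mu$ as $\lambda\to 0^+$ using the equi-Lipschitz bound. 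Once this duality framework is set up, the variational characterization of $u_0$ and the final convergence proceed by a robust adaptation of the local argument.
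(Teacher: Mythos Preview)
Your outline is correct at the level of strategy --- comparison for uniqueness, equi-Lipschitz bounds plus Arzel\`a--Ascoli for precompactness, and a DFIZ-type selection principle via generalized Mather measures built by Hahn--Banach --- and this is exactly the route the paper takes. But the duality step, as you have set it up, has a structural gap.

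You place the Mather measures on $\M$ and propose to characterize them by $\int_{\M}\big(-\II\varphi+H(x,D\varphi)-c\big)\,d\mu\ge 0$ for admissible $\varphi$. This functional is \emph{nonlinear} in $\varphi$ (through $H$), so it cannot be the output of a Hahn--Banach separation, which yields linear functionals; and since for smooth subsolutions the integrand is pointwise $\le 0$, the inequality is either trivial or forces equality on $\supp\mu$ for every such $\varphi$, which is too rigid. Your characterization of $u_0$ is also ill-posed as written (take $w=u-1$). The paper resolves this by \emph{lifting the measures to} $K:=\M\times Q$, where $Q$ is a compact set containing $\partial_pH(x,p)$ for $|p|\le\kappa$, and introducing the Lagrangian $L$ via Fenchel transform (after a harmless superlinearization of $H$ outside $\bar B_\kappa$). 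For $\phi\in\C(K)$ one sets $H_\phi(x,p):=\max_{\xi\in Q}[\langle p,\xi\rangle-\phi(x,\xi)]$; the set $\cF(\lambda)$ of pairs $(\phi,u)$ with $u$ a viscosity subsolution of $\lambda u-\II u+H_\phi(x,Du)=0$ is shown to be a convex cone (this is where the sup-convolution argument you mention is actually used), and Hahn--Banach applied to the cone $\cG(z,\lambda):=\{\phi-\lambda u(z)\}\subset\C(K)$ produces probability measures on $K$. Discounted and undiscounted Mather measures are then the minimizers of $\int_K L\,d\mu$ over the dual cone, with minimum values $\lambda u_\lambda(z)$ and $0$ respectively, and $u_0$ is the supremum of critical solutions $v$ with $\int_K v\,d\mu\le 0$ for all Mather measures $\mu$. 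The two key inequalities come simply from the memberships $(L-\lambda u_\lambda,u_\lambda)\in\cF(0)$ and $(L+\lambda v,v)\in\cF(\lambda)$, so the nonlocal term is absorbed into the subsolution property and never has to be integrated separately --- your concern (c) does not arise.
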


The limit solution $u_0$ is furthermore identified by using {\em Mather measures}, whose definition is adapted to the present context. This is proved is Section \ref{sec.asymptotic}, see Theorem \ref{thm main}, where we have implicitly assumed $c=0$, cf. condition {\bf (EX)}. We stress that we can always reduce to this case by possibly replacing $H$ with $H-c$.

The motivation to undertake this analysis is rooted in the seminal paper \cite{LPV}, where the authors studied the existence of solutions of the following Hamilton-Jacobi (HJ) equation
\begin{equation}\label{intro hjc}
H(x,D u)=c\qquad \hbox{in $\M$},
\end{equation}
for a Hamiltonian $H\in\C(\TM)$ which is coercive in the momentum $p$, uniformly with respect to $x\in \M$. The existence of a (unique) real number $c$ for which the equation \eqref{intro hjc} does admit viscosity solutions is established in \cite{LPV}  by looking at the asymptotics, as the discount factor $\gl\to 0^+$, of the unique solution $w_\gl$ of 
\[\lambda u(x)+H(x,D u)=0 \qquad\hbox{in $\M$}.
\]
According to \cite{LPV}, the functions $-\lambda w_{\lambda}$ uniformly converge on $\M$,  as $\lambda\to 0^+$, to a constant $c$. Furthermore, the solutions $(w_\lambda)_{\lambda>0}$ are equi-Lipschitz, yielding, by the Arzel\'a-Ascoli Theorem, that the functions 
$w_{\lambda}-\min_{x\in \M}w_{\lambda}$ uniformly converge, {\em along subsequences} as $\lambda\to 0^+$, to a 
solution of \eqref{intro hjc}. This technique to show existence of pairs $(u,c)\in\C(\M)\times\R$ that solve the equation \eqref{intro hjc} is referred to as {\em ergodic approximation} and it has since been employed, {\em mutatis mutandis}, in countless settings.   
 
At that time, it was not clear whether different converging sequences yield the same limit. In fact, this issue was raised as an interesting open question in \cite[Remark II.1]{AL98}. Some constraints on possible limits were subsequently identified in \cite{Gomes,IS11}, but the breakthrough came with the work 
\cite{DFIZ1}, where the authors proved that the unique solution $u_\lambda$ of
\begin{equation*}\label{lde}   
\lambda u(x)+H(x,D_x u)=c\qquad\hbox{in $\M$}
\end{equation*}
converges to a distinguished solution of the critical equation \eqref{intro hjc}, as $\lambda\rightarrow 0^+$, under the sole additional assumption that $H$ is convex in the gradient variable. The convergence of the discounted solutions $(u_\gl)_{\gl>0}$ is proved by making use of tools issued from the weak KAM Theory and depends crucially on the convexity of the Hamiltonian. It is in particular derived from the weak convergence of a family of associated probability measures, that we shall name hereafter {\em discounted Mather measures}. When the convexity condition on $H$ is dropped, these tools can no longer be applied, and, in fact, the functions $u_\lambda$ may not converge, as it was pointed out in \cite{Z19} through a counterexample posed on the 1-dimensional torus. 

Because of the popularity raised in literature by the ergodic approximation, the work \cite{DFIZ1} garnered significant attention and opened a new area of research regarding {\em vanishing discount problems}.  The application of discounted Mather measures for the asymptotic analysis and the phenomena highlighted in \cite{DFIZ1} have since been promptly extended and generalized to a variety of different problems, such as second-order HJ equations \cite{MiTra17, IMTa17, IMTb17, Zh24}, first-order discounted HJ equations depending nonlinearly on $u$ \cite{GMT18, CCIsZh19,Chen21,WYZ21, DNYZ24}, mean field games \cite{CaPo19}, weakly coupled HJ systems \cite{DZ21, Is21, IsLi20}, first-order HJ equations on non-compact manifolds \cite{IsSic20, CDD23}.

Discounted Mather measures were introduced in \cite{DFIZ1} as occupational measures on minimizing curves, relying on the availability of variational formulas to represent the discounted solutions. 
A totally different approach for constructing both discounted and non-discounted Mather measures was developed by the second author in \cite{IMTa17, IMTb17}. This alternative method leverages functional analysis tools and duality principles. In addition to its elegance, this approach has the significant advantage of being independent of the existence of representation formulas and minimizing curves, making it a considerably more flexible tool for establishing the validity of vanishing discount convergence in various settings, see \cite{Is21, IsLi20}. The duality method employed in all these works is rooted in the Sion minimax theorem. This duality approach was further refined and simplified in \cite{IsSic20} by using the more standard Hahn-Banach separation theorems for convex subsets in locally convex Hausdorff vector spaces.

The analysis conducted in this paper further confirms the effectiveness of this approach, which is here adapted to the case 
when, alongside a convex Hamiltonian, the equation includes an integro-differential operator which accounts for long-range interactions. 

Our motivation to study the vanishing discount problem in this nonlocal setting stems from the work \cite{BLT17}, where the authors establish, among other things, a comparison principle and the Lipschitz regularity of solutions for an equation of the form \eqref{intro eq discount} posed on $\Rd$, with the possible addition of a second-order term.  With the aid of these results, the authors extend the ergodic approximation approach to this setting, recovering results analogous to the ones obtained in \cite{LPV}. 

The duality techniques that underpin our asymptotic analysis are presented in Section \ref{sec.duality}. This framework allows us to provide a generalized notion of both discounted and non-discounted Mather measures, defined as the minimizers of the integral of the Lagrangian associated with the Hamiltonian via the Fenchel transform over a suitable class of probability measures. Theorem \ref{intro thm main} is ultimately derived, as  in \cite{DFIZ1}, from the weak convergence of these discounted Mather measures to non-discounted Mather measures.

The probability measures over which the  minimization is performed have the property of being {\em closed} in a proper sense, as detailed in Appendix \ref{app.smooth subsolutions}, which generalizes the standard definition (see, for example, \cite{DFIZ1}). The aforementioned Mather measures are expected to be minimizers within this broader class of closed measures as well. The proof of this assertion relies on the existence of smooth functions that serve as approximate subsolutions to \eqref{intro eq discount}, for $\gl\geq 0$, up to an arbitrarily small fixed error. Typically, such subsolutions are obtained by smoothing an almost everywhere subsolution of \eqref{intro eq discount} with a convolution kernel. Transitioning from a (Lipschitz) viscosity subsolution $v$ of \eqref{intro eq discount} to an almost everywhere subsolution is itself a challenging issue in the current setting, since the presence of the nonlocal term requires second-order differentiability of $v$ to give a pointwise meaning to the equation. We deal with this problem by approximating  $v$ via a sup-convolution, thus producing a semi-convex approximate subsolution of \eqref{intro eq discount}, hence almost everywhere twice-differentiable by Aleksandrov's Theorem, see Appendix \ref{app.sup convolution}. The subsequent step would be to regularize this semi-convex subsolution   with a convolution kernel. Yet, this approach appears problematic to implement in this context due to the presence of a nonlocal term of such a general form. We successfully carry out this regularization by slightly strengthening one of the assumptions related to the jump function, leaving the question of whether the result remains valid in the more general case addressed herein as an open question, see Appendix \ref{app.smooth subsolutions}.\smallskip

The paper is structured as follows: Section \ref{sec.preliminaries} introduces the necessary notation and preliminaries. 
Section \ref{sec.functional HJ equations} delves into the finer aspects of nonlocal HJ equations. Section \ref{sec.duality} presents the duality techniques for nonlocal convex HJ equations that underpin our asymptotic analysis. Section \ref{sec.asymptotic} is devoted to the asymptotic analysis itself. Finally, Section \ref{sec.examples} presents a class of convex and superlinear Hamiltonians to which the established results can be applied. The paper ends with three Appendices where we prove some technical results that support our analysis.\smallskip\\
{\small
\noindent{\textsc{Acknowledgements.\,$-$}} The first author is a member of the INdAM Research Group GNAMPA. He was partially supported for this research by KAUST project /Competitive Research Grant (CRG10) 2021 – 4674-CRG2021. The second author was partially supported by the KAKENHI \#20K03688, \#23K20224, \#23K20604, \#25K07072, JSPS.
}

\numberwithin{definition}{section}
\numberwithin{equation}{section}

\section{Preliminaries}\label{sec.preliminaries}
\subsection{Notation}\label{sec.notation}
We will denote by $B_R$ and by $B$ the open ball in $\R^d$ centered at $0$ of radius $R>0$ and $1$, respectively. 
For any subset $E$ of a topological space, we denote by $\bar E$, $\Dr{int}(E)$, $\partial E$ its closure, interior and boundary, respectively. 

Given a metric space $X$, we shall denote by $\Bd(X)$, $\LSC(X)$, $\USC(X)$, $\C_b(X)$, $\C(X)$, $\Lip(X)$ the space of real functions that are, respectively, bounded, lower semicontinuous, upper semicontinuous, bounded and continuous, continuous, Lipschitz continuous on $X$. When $X$ is compact, $\C(X)$ is regarded as the topological vector space endowed with the sup-norm $\|f\|_\infty:=\sup_{x\in X}|f(x)|$. We will denote by $\C^2(M)$ the space of $C^2$ functions on $M$ equipped with the topology inducing the local uniform convergence of functions and their derivatives up to second-order, where $M$ stands either for the Euclidean space $\R^d$ or the flat d-dimensional torus $\M:=\R^d/\Z^d$. We furthermore denote by $\C_{\mathrm{b}}^2 (M)$ the space of bounded $C^2$ functions on $M$, with bounded derivatives up to second-order. Real functions defined on $\M$ will be implicitly identified with their $\Z^d$--periodic representatives defined on $\R^d$, whenever this is needed. 

Given a topological vector space $Y$, we indicate by $Y^*$ its topological dual and by $\langle\cdot,\cdot\rangle$ the pairing between $Y$ and $Y^*$. When $Y=\Rd$, 
the topological dual $Y^*$ is identified with $\Rd$ and the notation $\langle\xi,p\rangle$ will denote the scalar 
product between $\xi\in Y^*=\Rd$ and $p\in Y=\Rd$. We will  denote by $|\cdot|$ the Euclidean norm on $\Rd$.

Let $\Omega$ be a convex open subset of $\R^d$. A function $u:\Omega\to\R$ is said to be {\em semiconvex} in $\Omega$ with semiconvexity constant $C\geq 0$ (briefly, {\em $C$-semiconvex}) if\ \  $u(x)+\frac{C}{2}|x|^2$\ \ is convex in $\Omega$. A semiconvex function $u$ is locally Lipschitz in $\Omega$. Furthermore, it is almost everywhere twice differentiable in $\Omega$, namely, the following holds,  see \cite[Theorem A.2]{users}. 

\begin{theorem}[Aleksandrov's Theorem]\label{thm Alexandroff} 
Let  $\Omega$ be a convex open subset of $\R^d$ and $u$ be a semiconvex function on $\Omega$. Then 
we can find a negligible set $\Sigma_u\subset\Omega$ such that, for every $y\in\Omega\setminus\Sigma_u$, there exists a vector $p_y\in\R^d$ and a symmetric $d\times d$ matrix $B_y$ such that 
\beq\label{eq Alexandroff}
u(x)=u(y)+\lan p_y,x-y\ran +\frac12 \lan B_y(x-y),x-y\ran+o(|y-x|^2)\quad\FOR\ x\in U.
\eeq 
\end{theorem}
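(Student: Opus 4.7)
The plan is to reduce the statement to the case of convex functions and then to extract the second-order Taylor expansion by combining measure-theoretic properties of the Hessian of a convex function with Rademacher's theorem and a pointwise convexity argument.

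First, I would set $v(x):=u(x)+\tfrac{C}{2}|x|^2$, which is convex on $\Omega$ by definition of $C$-semiconvexity. A second-order Taylor expansion for $v$ at $y$ of the form
\[
v(x)=v(y)+\lan q_y,x-y\ran+\tfrac12\lan A_y(x-y),x-y\ran+o(|x-y|^2)
\]
translates at once into one for $u$ with $p_y:=q_y-Cy$ and $B_y:=A_y-C\,\mathrm{Id}$. Hence it is enough to prove the theorem for convex $v$.

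Second, being convex on an open set, $v$ is locally Lipschitz on $\Omega$ and hence, by Rademacher's theorem, differentiable almost everywhere; let $\nabla v$ denote the a.e.-defined gradient. The distributional Hessian $D^2 v$ is a symmetric matrix-valued Radon measure on $\Omega$: for every direction $\xi$, the directional second derivative is a nonnegative distribution by convexity, and the Riesz representation theorem turns it into a positive Radon measure, the off-diagonal entries being recovered by polarization. Writing the Lebesgue decomposition $D^2 v=A(x)\,dx+\mu_s$, I would let $\Sigma'$ be the complement of the set of points $y$ at which $v$ is differentiable, $A(\cdot)$ admits a Lebesgue value $A_y:=A(y)$, and $\mu_s(B_r(y))/|B_r(y)|\to 0$ as $r\to 0^+$. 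Standard measure-theoretic results (Rademacher, Lebesgue differentiation, Besicovitch) then imply that $\Sigma'$ is negligible.

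Third, the actual work is in promoting this averaged information to the pointwise expansion
\[
v(x)=v(y)+\lan\nabla v(y),x-y\ran+\tfrac12\lan A_y(x-y),x-y\ran+o(|x-y|^2).
\]
The lower estimate $v(x)-v(y)-\lan\nabla v(y),x-y\ran\geq 0$ is immediate from convexity and the fact that $\nabla v(y)$ is the unique subgradient of $v$ at a differentiability point. The matching upper estimate is obtained by mollifying $v$ into a smooth convex $v_\varepsilon$, applying the second-order Taylor formula with integral remainder to $v_\varepsilon$, letting $\varepsilon\to 0^+$, and using the Lebesgue-point property at $y$ together with monotonicity of $\partial v$ (equivalently, $\lan p-\nabla v(y),z-y\ran\geq 0$ for every $p\in\partial v(z)$) to control both the absolutely continuous and singular parts of $D^2 v$ on small balls $B_r(y)$.

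I expect the main obstacle to be precisely this pointwise-from-measure step: converting the $L^1$-approximate differentiability of $\nabla v$ at a Lebesgue point of the density $A(\cdot)$ into a uniform quadratic remainder of order $o(|x-y|^2)$, while simultaneously showing that the singular part of $D^2 v$ does not contribute at such $y$. This is essentially Aleksandrov's original argument and hinges on a delicate interplay between monotonicity of $\partial v$ and Besicovitch-type differentiation of the matrix-valued Radon measure $D^2 v$; the remaining ingredients (Rademacher, Riesz representation, Lebesgue differentiation of vector-valued Radon measures) are classical.
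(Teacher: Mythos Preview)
The paper does not give its own proof of this statement: Aleksandrov's Theorem is merely quoted in the preliminaries, with a pointer to \cite[Theorem~A.2]{users}. So there is nothing in the paper to compare your argument against; any correct proof is acceptable here.

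Your outline follows one of the standard routes (essentially the Evans--Gariepy approach): reduce to the convex case, identify $D^2v$ as a matrix-valued Radon measure, take its Lebesgue decomposition, and show that at Lebesgue points of the absolutely continuous density (where also the singular part has zero density) the averaged second-order information upgrades to a genuine pointwise expansion. The reduction and the measure-theoretic setup are fine. Two remarks on the third step, which you rightly flag as the crux. First, the inequality $v(x)-v(y)-\lan\nabla v(y),x-y\ran\geq 0$ is only the first-order supporting-hyperplane bound; it does not by itself give the lower half of the quadratic expansion. What one actually proves is that $\nabla v$ is approximately differentiable at $y$ with derivative $A_y$ (this is where the Lebesgue-point hypothesis on $A$ and the vanishing density of $\mu_s$ enter), and then one integrates the approximate derivative of $\nabla v$ along segments, using monotonicity of $\partial v$ to turn approximate information into a uniform $o(r^2)$ remainder on $B_r(y)$. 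Second, your mollification description is a bit loose: the point is not just to pass to the limit in a Taylor formula for $v_\varepsilon$, but to control $\int_{B_r(y)}|\nabla v - \nabla v(y) - A_y(\cdot - y)|$ by $o(r)\,r^d$, which is exactly the approximate-differentiability statement for the monotone map $\nabla v$. Once that is in hand, convexity (via the monotonicity of subgradients) converts it into the two-sided pointwise bound you want. With these clarifications your plan is sound.
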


%
%

\subsection{Nonlocal HJ equations}\label{sec.nonlocal HJ}
We will be interested in a nonlocal Hamilton-Jacobi equation of the form 
\beq\label{eq general discounted}
\gl u-\II u+H(x,Du)=0 \ \ \IN M, \tag{HJ$^\lambda$}
\eeq
where $M$ is either the Euclidean space $\R^d$ or the flat $d$--dimensional flat torus $\M=\R^d/\Z^d$,  
$\lambda\geq 0$, $H\in\C(T^*M)$ and 
\beq\label{def I}\tag{I}
\II u(x):=\int_{\R^d}\big(u(x+j(x,z))-u(x)-\1_B(z)\,\du{j(x,z),Du(x)}\,\big)\,\nu(dz),
\quad
x\in M,
\eeq
where 
$\1_B$ is the characteristic function of the open unit ball $B$ centered at 0. Throughout the paper, we will assume $\nu$ to be a positive Borel measure 
on $\Rd$ and $j\mid M\tim \R^d \to \R^d$ to be a Borel 
measurable function. The function $j$ and the measure $\nu$ will satisfy the following standing assumptions:
\begin{itemize}
\item[\bf (N)] there exists a constant $C_\nu>0$ such that\quad $\displaystyle\int_{\R^d} 1\wedge |z|^2\,\nu(dz)\leq C_\nu$;\smallskip
\item[\bf (J1)] there exists a constant $C_j>0$ such that 
\begin{flalign*}
|j(x,z)|\leq C_j|z|,
\quad
|j(x,z)-j(y,z)|\leq C_j |x-y| |z|
\qquad  &\FORALL x, y\in M \AND z\in\R^d;
\end{flalign*}
\item[\bf (J2)] for every $R>0$, there exists a constant $C_R>0$ such that 
\begin{flalign*}
\int_{\R^d\stm B_R}|j(x,z)-j(y,z)|\nu(dz)\leq C_R|x-y|\qquad  &\FORALL x,y\in M.
\end{flalign*}
\end{itemize}

The hypotheses that we will assume on the Hamiltonian $H$ in the different sections of the paper will be chosen among the following list:
\begin{itemize}
\item[\bf (H1)]\quad $|H(x,p)-H(y,p)|\leq \omega_H\left(|x-y|(1+|p|)\right)
\qquad
\hbox{for all $x,y\in M$ and $p\in\R^d$}$\\
for some continuity modulus $\omega_H:[0,+\infty)\to [0,+\infty)$;\smallskip
\item[\bf (H2)] \quad $p\mapsto H(x,p)$\quad is convex for every fixed $x\in M$.\medskip
%
%
\end{itemize}

The nonlocal differential operator $\II$ is well defined when acting on $\C_{\mathrm{b}}^2 (M)$. In fact, the following holds. 

\begin{lemma}\label{lemma I well defined}  Assume {\bf (N),\,(J1),\,(J2)}. 
Let $\varphi\in \C_{\mathrm{b}}^2 (M)$. Then, for every $x\in M$, the function
\[
z\mapsto \varphi(x+j(x,z))-\varphi(x)- \1_B(z)\lan D\varphi(x),j(x,z)\ran\quad\hbox{belongs to $L^1(\R^d,\nu)$,}
\]
in particular the following quantity
\begin{equation*}
\II \varphi(x):=\int_{\R^d}\big(\varphi(x+j(x,z))-\varphi(x)-\1_B(z)\,\du{j(x,z),D\varphi(x)}\,\big)\,\nu(dz)
\end{equation*}
is a well defined real number. 
Furthermore, the function $x\mapsto \II\varphi(x)$ is bounded and continuous on $M$.   
\end{lemma}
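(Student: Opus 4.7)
The strategy is to split the integral defining $\II\varphi(x)$ into the near-field part (over the unit ball $B$) and the far-field part (over $\R^d\setminus B$), estimate each one by a $\nu$-integrable quantity that is uniform in $x$, and then deduce both well-definedness/boundedness and continuity from these estimates together with the dominated convergence theorem.

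For the near-field part, let $z\in B$ and apply a second-order Taylor expansion to $\varphi\in\C_{\mathrm b}^2(M)$ at $x$ along the segment from $x$ to $x+j(x,z)$: there exists $\xi$ on this segment such that
\[
\varphi(x+j(x,z))-\varphi(x)-\lan D\varphi(x),j(x,z)\ran=\tfrac12\lan D^2\varphi(\xi)j(x,z),j(x,z)\ran,
\]
whose absolute value is bounded by $\tfrac12\|D^2\varphi\|_\infty|j(x,z)|^2\le \tfrac12 C_j^2\|D^2\varphi\|_\infty|z|^2$ by \textbf{(J1)}. Hence the near-field integrand is dominated, independently of $x$, by a constant times $|z|^2\1_B(z)$, which is $\nu$-integrable by \textbf{(N)}. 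For the far-field part, the indicator $\1_B$ vanishes, so the integrand equals $\varphi(x+j(x,z))-\varphi(x)$ and is bounded by $2\|\varphi\|_\infty$; since \textbf{(N)} implies $\nu(\R^d\setminus B)\le C_\nu$, this is also $\nu$-integrable uniformly in $x$. Summing the two bounds gives the integrability claim and the explicit estimate
\[
|\II\varphi(x)|\le \tfrac12 C_j^2 C_\nu\|D^2\varphi\|_\infty+2C_\nu\|\varphi\|_\infty\qquad\FORALL x\in M,
\]
which proves that $\II\varphi$ is well-defined and bounded.

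For continuity at an arbitrary $x\in M$, take a sequence $x_n\to x$. By \textbf{(J1)} the map $x\mapsto j(x,z)$ is Lipschitz for every fixed $z$, so $j(x_n,z)\to j(x,z)$; combined with continuity of $\varphi$ and $D\varphi$, this yields pointwise convergence, for $\nu$-almost every $z\in\R^d$, of the integrand evaluated at $x_n$ to the integrand evaluated at $x$. The bounds established in the previous paragraph apply uniformly in $n$, hence serve as a $\nu$-integrable dominating function on each of the two regions. The dominated convergence theorem then gives $\II\varphi(x_n)\to\II\varphi(x)$, proving continuity.

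The only potential obstacle is making sure the dominating bounds are genuinely independent of $x$, which is exactly what \textbf{(J1)} provides through the estimate $|j(x,z)|\le C_j|z|$; assumption \textbf{(J2)} is not needed at this level and will play its role later, when Lipschitz estimates on $\II\varphi$ are sought. No further ingredients beyond \textbf{(N)}, \textbf{(J1)}, and second-order Taylor expansion are required.
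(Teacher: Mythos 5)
Your proof is correct, and for the integrability and boundedness part it coincides with the paper's argument (Taylor expansion with Lagrange remainder on $B$, crude bound on $\R^d\setminus B$, then \textbf{(N)} and \textbf{(J1)}). For continuity, however, you take a genuinely simpler route: you fix $z$, observe that the integrand is continuous in $x$ by \textbf{(J1)} and continuity of $\varphi$, $D\varphi$, and then apply dominated convergence globally on $\R^d$ with the same uniform-in-$x$ dominating function you already built. The paper instead splits $\R^d$ into three regions $B$, $B_R\setminus B$, $\R^d\setminus B_R$, bounds $|\II\varphi(x)-\II\varphi(y)|$ quantitatively, invokes \textbf{(J2)} on the annulus $B_R\setminus B$ to control $\int_{B_R\setminus B}|j(x,z)-j(y,z)|\,\nu(dz)$, uses dominated convergence only on the inner ball, and finishes by sending $R\to\infty$. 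Your observation that \textbf{(J2)} is not needed here is accurate: it only enters when one wants quantitative modulus-of-continuity or Lipschitz estimates on $\II\varphi$, which your soft argument does not produce but the statement does not ask for. What the paper's more elaborate estimate buys is precisely a step towards such quantitative bounds, and it foreshadows the structure used later (e.g.\ in the comparison principle), whereas your argument is cleaner and hypotheses-minimal for the stated conclusion.
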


\bproof Let us start with the case $M:=\R^d$. We begin by remarking that there exists a constant $C=C\left(\|\varphi\|_\infty,\,\|D^2\varphi\|_\infty\right)>0$ such that 
\begin{align*}
|\varphi(x+j(x,z))-\varphi(x)-\1_B(z)\,\langle D\varphi(x),j(x,z)\rangle|
&\leq C \left(\1_B(z)\, |j(x,z)|^2 + \1_{\R^d\setminus B}(z)\,  \right)
\end{align*}
for all $x,z\in\R^d$, in view of Taylor's Theorem and the fact that $\varphi$ is in $\C^2_b(\R^d)$.  This proves the first assertion, as well as the fact that the function $\II \varphi$ is bounded, in view of assumptions {\bf (N)} and {\bf (J1)}.

Let us proceed to prove that $\II \varphi$ is continuous on $\R^d$. For any fixed $x,y\in\R^d$ and for every $R>1$, we have 
\begin{flalign*}
&
\left|
\II \varphi(x)-\II \varphi(y)
\right|
\leq
4\|\varphi\|_\infty \nu\left(\R^d\setminus B_R\right)
+
2\|D\varphi\|_\infty\int_{B_R\setminus B} \left( |x-y|+|j(x,z)-j(y,z)| \right)\,\nu(dz)\\
&
+
\int_{B} 
\big|
\underbrace
{ 
\varphi(x+j(x,z))-\varphi(x)-\du{j(x,z),D\varphi(x)}
-
\varphi(y+j(y,z))-\varphi(y)-\du{j(y,z),D\varphi(y)}
}_{\textrm{$\Delta_{x,y}\varphi(z)$}}
\big|
\,\nu(dz),
\end{flalign*}
hence, by assumptions {\bf (N)} and {\bf (J2)}, we get 
\begin{flalign}\label{eq0 continuity I}
&\left|
\II \varphi(x)-\II \varphi(y)
\right|
\leq
4\|\varphi\|_\infty \nu\left(\R^d\setminus B_R\right)
+
2\|D\varphi\|_\infty (C_1+C_\nu)|x-y|
\!
+
\!
\int_B \! \big| \Delta_{x,y}\varphi(z)\big| 
\,\nu(dz).
\!\!\!
&&
\end{flalign}
Let us estimate the integral of $\Delta_{x,y}\varphi(z)$ over $B$. To ease notation, we will write $j_x:=j(x,z)$ and $j_y:=j(y,z)$. 
By using the integral form for the remainder in Taylor's Theorem, for every $z\in B$ we have 
\begin{flalign*}
\Delta_{x,y}\varphi(z)
&=
\int_0^1 
\Big(
\langle D^2\varphi(x+tj_x)j_x,j_x\rangle
-
\langle D^2\varphi(y+tj_y)j_y,j_y\rangle
\Big)
(1-t)
\, dt\,. 
\end{flalign*}
Setting for $(t,x,y,z)\in [0,1]\tim\R^d\tim\R^d\tim B$,
\[
g(t,x,y,z)=\left(\langle D^2\varphi(x+tj_x)j_x,j_x\rangle
-
\langle D^2\varphi(y+tj_y)j_y,j_y\rangle\right)(1-t),
\]
we see from {\bf (J1)} that for every $(t,x,y,z)\in[0,1]\tim\R^d\tim\R^d\tim B$,
\[
|g(t,x,y,z)|\leq \|D^2\varphi\|_\infty(|j(x,z)|^2+|j(y,z)|^2)\leq 2C_j\|D^2\varphi\|_\infty|z|^2, 
\]
and for each fixed $(t,x,z)\in [0,1]\tim\R^d\tim B$,
\[
\lim_{y\to x} g(t,x,y,z)=g(t,x,x,z)=0.
\]
Hence, by assumption {\bf (N)} and the Dominated Convergence Theorem, we deduce that 
\beq\label{eq2 continuity I}
\lim_{y\to x} 
\int_B \Delta_{x,y}\varphi(z)\,\nu(dz) 
=\lim_{y\to x} \int_B\int_0^1 g(t,x,y,z)\,dt\,\nu(dz)=0.
\eeq
By passing to the limit in 
\eqref{eq0 continuity I} for $y\to x$, we infer from  
\eqref{eq2 continuity I} above that 
\[
\limsup_{y\to x} 
\left|
\II \varphi(x)-\II \varphi(y)
\right|
\leq
4\|\varphi\|_\infty \nu\left(\R^d\setminus B_R\right)
\quad
\FORALL R>1.
\]
Now we send $R\to+\infty$ and derive from {\bf (N)} that $\II \varphi$ is continuous at $x$, for every $x\in\R^d$.

In the periodic case $M:=\T^d$, it is easily seen that the function $\II \varphi$ is $\Z^d$--periodic on $ M$, namely it belongs to $\C(\M)$. 
\eproof

Given a function $u\in\Bd( M)$, we will denote by $u^*,u_*$ its upper and lower semicontinuous envelope, i.e. 
\beq\label{def semicontinuous envelope}
u^*(x):=\inf_{\rho>0} \sup_{|y-x|<\rho} u(y),
\qquad
u_*(x):=\sup_{\rho>0} \inf_{|y-x|<\rho} u(y)
\qquad
\FORALL x\in M.
\eeq
We give the definition of discontinuous sub and supersolution to \eqref{eq general discounted}.

\begin{definition} \label{def1}Let $u\in\Bd( M)$. 
\begin{itemize}
\item[\em (i)] We will say that $u$ is a {\em subsolution}  
of \eqref{eq general discounted} if, whenever $\varphi\in \C^2_{\mathrm{b}}( M)$ satisfies 
$(u^*-\varphi)(\hat x)=\max(u^*-\varphi)$ at some point $\hat x\in  M$, we have
\[
\gl u^*(\hx)-\II\varphi(\hat x)+H(\hat x,D\varphi(\hat x))\leq 0. 
\] 
\item[\em (ii)] We will say that $u$ is a {\em supersolution}  
of \eqref{eq general discounted} if, whenever $\varphi\in \C^2_{\mathrm{b}}( M)$ satisfies 
$(u_*-\varphi)(\hat x)=\min(u_*-\varphi)$ at some point $\hat x\in  M$, we have
\[
\gl u_*(\hx)-\II\varphi(\hat x)+H(\hat x,D\varphi(\hat x))\geq 0. 
\]
\end{itemize} 

Finally, we will say that $u\in\Bd( M)$ is a {\em solution} of \eqref{eq general discounted} if it is both a subsolution and a supersolution.
\end{definition}

Next, we state the following result, which can be proved along the same lines of \cite[Proof of Proposition 2.11]{bardi}.

\begin{proposition}\label{prop sup inf} Let $U$ be an open subset of $M$.
\begin{itemize}
\item[\em (a)] Let $\Sol^-$ be a set of functions such that $v^*$ is a subsolution of \eqref{eq general discounted} in $U$ for all $v\in\Sol^-$, and define
\[
u(x):= \sup_{v\in\Sol^-} v(x),\qquad\hbox{$x\in U$}
\]
If $u$ is locally bounded, then $u$ is a subsolution of \eqref{eq general discounted} in $U$.
\item[\em (b)] Let $\Sol^+$ be a set of functions such that $w_*$ is supersolution of \eqref{eq general discounted} in $U$ for all
$w\in\Sol^+$, and define
\[
u(x):= \inf_{w\in\Sol^+} w(x),\qquad\hbox{$x\in U$}.
\]
If $u$ is locally bounded, then $u_*$ is a supersolution of \eqref{eq general discounted} in $U$. 
\end{itemize}
\end{proposition}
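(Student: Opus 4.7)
The plan is to adapt the classical upper/lower envelope argument for viscosity solutions (as in \cite[Proposition~2.11]{bardi}) to the present nonlocal framework. I will describe (a) in detail; (b) follows by the dual reasoning, interchanging maxima with minima, $u^*$ with $u_*$, and $+\alpha\psi$ with $-\alpha\psi$ below. For clarity I focus on the case $M=\T^d$, where bounded USC functions attain their suprema by compactness; the case $M=\R^d$ requires an additional penalization at infinity but is otherwise identical.

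Let $\varphi\in\C^2_b(M)$ and $\hat x\in U$ with $(u^*-\varphi)(\hat x)=\max_M(u^*-\varphi)$. After subtracting a constant from $\varphi$ I assume this maximum is $0$. The specific obstacle in the nonlocal setting is that the usual local strict-max perturbations (such as $|x-\hat x|^4$) are not in $\C^2_b(M)$ and, more importantly, the operator $\II$ reads test functions globally. To cope with this I fix once and for all a function $\psi\in\C^2_b(M)$ with $\psi\ge 0$, $\psi(\hat x)=0$, $D\psi(\hat x)=0$, and $\psi(x)>0$ for $x\ne\hat x$ (e.g., $\psi(x)=\sum_{k=1}^d\sin^2(\pi(x_k-\hat x_k))$ on $\T^d$), and for each $\alpha>0$ set $\varphi_\alpha:=\varphi+\alpha\psi$; then $\hat x$ is the unique strict global maximizer of $u^*-\varphi_\alpha$ on $M$.

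By definition of $u^*$, I pick $x_n\to\hat x$ with $u(x_n)\to u^*(\hat x)$ and choose $v_n\in\Sol^-$ with $v_n(x_n)\ge u(x_n)-1/n$, so that $v_n^*(x_n)\to u^*(\hat x)$. For each $\alpha>0$ the bounded USC function $v_n^*-\varphi_\alpha$ attains its supremum on $\T^d$ at some point $\hat x_n^\alpha$. The inequality $(v_n^*-\varphi_\alpha)(\hat x_n^\alpha)\ge(v_n^*-\varphi_\alpha)(x_n)\to 0$, together with $v_n^*\le u^*$ and the strict maximality of $\hat x$ for $u^*-\varphi_\alpha$, forces $\hat x_n^\alpha\to\hat x$ and $v_n^*(\hat x_n^\alpha)\to u^*(\hat x)$. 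Since $U$ is open, $\hat x_n^\alpha\in U$ for $n$ large; the subsolution property of $v_n^*$ with test function $\varphi_\alpha$ at $\hat x_n^\alpha$ then yields
\[
\lambda v_n^*(\hat x_n^\alpha)-\II\varphi_\alpha(\hat x_n^\alpha)+H(\hat x_n^\alpha,D\varphi_\alpha(\hat x_n^\alpha))\le 0.
\]
Letting $n\to\infty$, and using the continuity of $\II\varphi_\alpha$ (Lemma \ref{lemma I well defined}) and of $H$ together with $D\varphi_\alpha(\hat x)=D\varphi(\hat x)$ (since $D\psi(\hat x)=0$), I obtain
\[
\lambda u^*(\hat x)-\II\varphi(\hat x)-\alpha\II\psi(\hat x)+H(\hat x,D\varphi(\hat x))\le 0.
\]
Finally, $\II\psi(\hat x)=\int_{\R^d}\psi(\hat x+j(\hat x,z))\,\nu(dz)$ is a fixed nonnegative finite number (using $\psi(\hat x)=0$, $D\psi(\hat x)=0$, $\psi\ge 0$), so sending $\alpha\to 0^+$ delivers the desired subsolution inequality at $\hat x$.

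The step I expect to demand the most care is the convergence $\hat x_n^\alpha\to\hat x$, which is precisely why the auxiliary parameter $\alpha$ is introduced: unlike in the local case, no strict-max perturbation in $\C^2_b(M)$ can be chosen whose contribution to $\II$ at $\hat x$ vanishes exactly, so the error $\alpha\II\psi(\hat x)$ must be absorbed by sending $\alpha\to 0^+$ \emph{after} the limit in $n$ has been carried out.
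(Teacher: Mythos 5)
Your proof is correct and takes essentially the approach the paper has in mind: the paper itself supplies no proof but points to the classical envelope argument in \cite[Proposition~2.11]{bardi}, and your argument is exactly that argument, adapted to cope with the fact that $\II$ reads test functions globally. The adaptation you isolate is the right one: replacing the usual strict-maximum perturbation by $\alpha\psi$ with $\psi\ge 0$, $\psi(\hat x)=0$, $D\psi(\hat x)=0$, $\psi\in\C^2_b(M)$ produces the error term $\alpha\II\psi(\hat x)=\alpha\int_{\R^d}\psi(\hat x+j(\hat x,z))\,\nu(dz)\ge 0$ (finite by {\bf (N)} and {\bf (J1)} via the Taylor bound at $\hat x$), which is genuinely nonzero in the nonlocal case and must be removed by sending $\alpha\to0^+$ after the limit in $n$. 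All the intermediate steps check out: $v_n^*\le u^*$, the squeeze $(v_n^*-\varphi_\alpha)(\hat x_n^\alpha)\le -\alpha\psi(\hat x_n^\alpha)$ together with $(v_n^*-\varphi_\alpha)(\hat x_n^\alpha)\ge (v_n^*-\varphi_\alpha)(x_n)\to 0$ forces $\psi(\hat x_n^\alpha)\to 0$ and hence $\hat x_n^\alpha\to\hat x$ by compactness and the uniqueness of the zero of $\psi$, and then $v_n^*(\hat x_n^\alpha)\to u^*(\hat x)$ by the USC bound from above and the lower bound $v_n^*(\hat x_n^\alpha)\ge \varphi_\alpha(\hat x_n^\alpha)+(v_n^*-\varphi_\alpha)(x_n)$. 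The continuity of $\II\varphi_\alpha$ from Lemma \ref{lemma I well defined} then allows you to pass to the limit in $n$, and the nonnegativity and finiteness of $\II\psi(\hat x)$ finish the argument as $\alpha\to0^+$.

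One small stylistic remark: in the classical local proof no auxiliary parameter $\alpha$ is needed because the penalization is chosen so that its derivative vanishes at $\hat x$ and only the derivative enters the equation; you explain clearly why this shortcut is unavailable here, and that explanation is the genuine content your write-up adds over the paper's bare citation.
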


\section{More on nonlocal HJ  equations}\label{sec.functional HJ equations}

\subsection{Generalities}
Let us consider the following nonlocal Hamilton-Jacobi equation 
\beq\label{eq discounted}
\gl u-\II u+H(x,Du)=0 \ \ \IN \R^d,
\tag{HJ$^\lambda_{\R^d}$}
\eeq
with $\lambda\geq 0$ and $H\in\C(\TR)$. Throughout this section, we will assume conditions {\bf (N),\,(J1),\,(J2)} with $M:=\R^d$.

We start by providing some equivalent ways to check the subsolution test for a bounded upper semicontinuous function. 
\begin{theorem}\label{thm sub equivalent}
Let $\lambda\geq 0$. Assume $H\in\C(\TR)$ and {\bf (N),\,(J1),\,(J2)}. 
Let $u\in\Bd(\R^d)\cap\USC(\R^d)$ and let  $\varphi\in \C^2_{{b}}(\R^d)$ be such that 
$(u-\varphi)(\hat x)=\max_{\R^d}(u-\varphi)$ at some point $\hat x\in \R^d$. The following are equivalent facts:
\begin{itemize}
\item[\em (i)] \quad $\gl u(\hx)-\II\varphi(\hat x)+H(\hat x,D\varphi(\hat x))\leq 0$;
\item[\em (ii)] \quad for every $\delta\in (0,1)$, 
\[
\gl u(\hx)-\II_\gd^1\varphi(\hat x)
-\II_\gd^2 (u,D\varphi(\hat x),\hat x)
+H(\hat x,D\varphi(\hat x))\leq 0,
\] 
where 
\begin{align}
& \II_\gd^1\varphi(x):=\int_{B_\gd} \big(\varphi(x+j(x,z))-\varphi(x)-\du{j(x,z),D\varphi(x)}\big)\,\nu(dz), \label{def11}
\\&\II_\gd^2( u, p,x):=\int_{\R^d\stm B_\gd} \big(u(x+j(x,z))-u(x)-\1_B(z)\du{j(x,z),p}\big)\,\nu(dz). \label{def12}
\end{align}
\item[\em(iii)] \quad $
\gl u(\hx)-\II(u,D\varphi(\hx),\hat x)+H(\hat x,D\varphi(\hat x))\leq 0,
$
where 
\[
{\II (u,p,x)}:=\int_{\R^d}\big(u(x+j(x,z))-u(x)-\1_B(z)\lan p,j(x,z)\ran\big)\,\nu(dz);
\]
\end{itemize}
Furthermore, if either one of the above facts holds, then 
\beq\label{eq inequality I}
\II(u,D\varphi(\hx),\hx)\leq \II_\gd^1\varphi(\hx)+\II_\gd^2(u,D\varphi(\hx),\hx)\leq 
\II\varphi(\hx)
\eeq
and each term in this inequality is finite. 
\end{theorem}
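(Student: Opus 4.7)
The plan is to derive the inequality chain \eqref{eq inequality I} from two algebraic splittings of the integrals combined with the pointwise sign information supplied by the global maximality of $(u-\varphi)$ at $\hat x$; the equivalences and the finiteness assertion then follow as corollaries. Writing $p:=D\varphi(\hat x)$ and using that $B_\delta\subset B$ for $\delta\in(0,1)$, a direct computation---subtracting, separating at the sphere of radius $\delta$, and cancelling the compensator $\mathbf{1}_B(z)\langle p,j(\hat x,z)\rangle$---gives
\[
\II\varphi(\hat x) - \bigl(\II_\delta^1\varphi(\hat x) + \II_\delta^2(u,p,\hat x)\bigr) = \int_{\R^d\setminus B_\delta}\bigl[(\varphi-u)(\hat x+j(\hat x,z)) - (\varphi-u)(\hat x)\bigr]\,\nu(dz),
\]
\[
\bigl(\II_\delta^1\varphi(\hat x) + \II_\delta^2(u,p,\hat x)\bigr) - \II(u,p,\hat x) = \int_{B_\delta}\bigl[(\varphi-u)(\hat x+j(\hat x,z)) - (\varphi-u)(\hat x)\bigr]\,\nu(dz),
\]
the second identity being interpreted in $[0,+\infty]$ if $\II(u,p,\hat x)=-\infty$. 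Both integrands are non-negative because $(u-\varphi)$ is globally maximal at $\hat x$, yielding the chain \eqref{eq inequality I}.

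Finiteness of the outer and middle entries is then easy: $\II\varphi(\hat x)$ is finite by Lemma~\ref{lemma I well defined}; $\II_\delta^1\varphi(\hat x)$ via the Taylor bound $|\varphi(\hat x+j)-\varphi(\hat x)-\langle j,D\varphi(\hat x)\rangle|\leq \tfrac12\|D^2\varphi\|_\infty C_j^2|z|^2$ together with \textbf{(N)}; and $\II_\delta^2(u,p,\hat x)$ because its integrand is bounded (using boundedness of $u$ and \textbf{(J1)}) on the $\nu$-finite set $\R^d\setminus B_\delta$. The implications (iii)$\Rightarrow$(ii)$\Rightarrow$(i) are immediate from the chain, since replacing the subtracted quantity by a smaller one only strengthens the subsolution inequality.

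For the reverse implications I would pass to the limit $\delta\to 0^+$. The Taylor estimate and \textbf{(N)} force $\II_\delta^1\varphi(\hat x)\to 0$, while a dominated-convergence argument (dominating function: the Taylor remainder of $\varphi$ near the origin thanks to the max property, and the bounded integrand on the $\nu$-finite set $\R^d\setminus B$) gives $\II_\delta^2(u,p,\hat x)\to \II(u,p,\hat x)$ in $[-\infty,+\infty)$. If (ii) holds for every $\delta\in(0,1)$, passing to the limit yields (iii) and compels $\II(u,p,\hat x)>-\infty$, since otherwise the left-hand side of the inequality would blow up while $\lambda u(\hat x)+H(\hat x,p)$ is a real number; this also completes the finiteness claim. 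The remaining implication (i)$\Rightarrow$(ii) is obtained by a test-function modification: for each $\delta$ one constructs $\tilde\varphi\in\C^2_{\mathrm b}(\R^d)$ coinciding with $\varphi$ on a smaller ball around $\hat x$ (so $D\tilde\varphi(\hat x)=p$), dominating $u$ globally (so the max property at $\hat x$ persists), and approximating $u$ on $\R^d\setminus B_\delta(\hat x)$ well enough that $\II\tilde\varphi(\hat x)$ comes arbitrarily close to $\II_\delta^1\varphi(\hat x)+\II_\delta^2(u,p,\hat x)$. Applying (i) to $\tilde\varphi$ and taking the limit produces (ii). The hardest step, as I foresee it, is precisely the construction of these smooth majorants of the merely USC function $u$ glued with $\varphi$ near $\hat x$: the standard route is to write $u$ as a pointwise decreasing infimum of smooth bounded functions (e.g.\ via inf-convolution with a mollifier) and then combine them with $\varphi$ through a smooth cutoff supported in a small neighborhood of $\hat x$.
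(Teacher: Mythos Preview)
Your overall architecture matches the paper's: the chain \eqref{eq inequality I} follows from $u\le\varphi$, the easy implications (iii)$\Rightarrow$(ii)$\Rightarrow$(i) drop out, and the hard direction is handled by replacing $\varphi$ away from $\hat x$ with smooth majorants of $u$ glued via a cutoff. Your additional step (ii)$\Rightarrow$(iii) by sending $\delta\to 0^+$ is correct and is a pleasant variation; the paper instead proves (i)$\Rightarrow$(iii) in one shot and recovers (ii) from the chain.

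There is, however, one imprecision in your (i)$\Rightarrow$(ii). The partition defining $\II_\delta^1$ and $\II_\delta^2$ is in the \emph{$z$-variable} (the ball $B_\delta\subset\R^d_z$), whereas your modified test function $\tilde\varphi$ is built by an \emph{$x$-space} cutoff: $\tilde\varphi=\varphi$ on a ball $B_r(\hat x)$ and $\tilde\varphi\approx u$ outside $B_\delta(\hat x)$. Since assumption \textbf{(J1)} gives only an upper bound $|j(\hat x,z)|\le C_j|z|$ and no lower bound, the sets $\{z\in B_\delta\}$ and $\{\hat x+j(\hat x,z)\in B_\delta(\hat x)\}$ need not be comparable, so the claim that $\II\tilde\varphi(\hat x)$ approaches the \emph{middle} term $\II_\delta^1\varphi(\hat x)+\II_\delta^2(u,p,\hat x)$ is not justified. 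What the construction actually delivers---once you let the majorants $\psi_k\downarrow u$ and then shrink the cutoff support to $\{\hat x\}$---is $\II\tilde\varphi(\hat x)\to\II(u,p,\hat x)$ by two applications of monotone convergence; this is exactly how the paper argues, and it gives (iii) directly (hence (ii) via the chain). So your outline is right, but the target of the approximation should be $\II(u,p,\hat x)$, not the intermediate quantity.
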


\bproof
As usual, we may assume that $(u-\varphi)(\hx)=0$.  
Since 
\[
u(x)\leq \varphi(x) \ \ \FOR x\in\R^d\quad \AND\quad u(\hx)=\varphi(\hx),
\]
we see that claim \eqref{eq inequality I} holds. 
From this, it follows that {\em (iii)}\,$\Rightarrow$\,{\em (ii)}\,$\Rightarrow$\,{\em (i)}.

Now, let us assume that {\em (i)} holds. Thanks to Lemma \ref{approx}, there is a sequence $\{\psi_k\mid k\in\N\}\subset \C_{\mrm{b}}^2(\R^d)$ 
such that 
\[
\psi_k(x)\geq \psi_{k+1}(x) \ \ \FOR (x,k)\in\R^d\tim\N \ \ 
\AND \ \ \lim_k \psi_k(x)=u(x) \ \ \FOR x\in\R^d.
\]
Notice that $\psi_k(x)\geq u(x)$ for $(x,k)\in\R^d\tim\N$.
Fix a function $\chi\in \C^\infty_c(\R^d)$ such that 
$0\leq\chi\leq 1$ in $\R^d$ and $\chi(x)=1$ in an open neighborhood $U$ of $\hx$.  We set
\[
\varphi_k(x)=(1-\chi(x))\psi_k(x)+\chi(x)\varphi(x) \ \ \FOR (x,k)\in\R^d\tim\N.
\]
Note that $\varphi_k(x)=\varphi(x)$ for $(x,k)\in U\tim\N$, which implies that 
$\varphi_k(\hx)=u(\hx)$, $D\varphi_k(\hx)=D\varphi(\hx)$
and $\varphi_k(x)\geq u(x)$.  By {\em (i)}, we have
\[
\gl u(\hx)+H(\hx,D\varphi(\hx))\leq \II\varphi_k(\hx) \ \ \FORALL k\in\N,
\]
which implies that 
\[
\inf_k \II\varphi_k(\hx)\geq \gl u(\hx)+H(\hx,D\varphi(\hx))>-\infty.
\]
Since 
\[
\II\varphi_k({\hx})=\int_{\R^d} \big(\varphi_k(\hx+j(\hx,z))-\varphi(\hx)-\1_B(z)\lan D\varphi(\hx),j(\hx,z)\ran\big)\,\nu(dz),
\]
\[
\varphi_k(x)\geq \varphi_{k+1}(x) \ \ \FORALL (x,k)\in\R^d\tim\N,\quad \lim_k \varphi_k(x)=((1-\chi)u)(x)+(\chi \varphi)(x) \ \ \FORALL x\in\R^d,
\]
and, by Lemma \ref{lemma I well defined}, 
\[
z\mapsto \varphi_k(\hx+j(\hx,z))-\varphi(\hx)-\1_B(z)\lan D\varphi(\hx),j(\hx,z)\ran \in L^1(\R^d,\nu),
\]
we see by the Monotone Convergence Theorem that
\begin{flalign} \label{eq1.3}
&\gl u(\hx)+H(\hx,D\varphi(\hx))\\
&
\leq 
\int_{\R^d}\Big( \big( (1-\chi) u\big)(\hx+j(\hx,z))+(\chi\varphi)(\hx+j(\hx,z))
-\varphi(\hx)-\1_B(z)\lan D\varphi(\hx),j(\hx,z)\ran\Big)\,\nu(dz).\nonumber
\end{flalign}
We choose a sequence $\{\chi_j\mid j\in\N\} \subset C^\infty(\R^d)$ such that 
\[\left\{\bald
&0\leq \chi_j\leq 1 \ \ \ON \R^d, \ \ \chi_j=1 \text{ in a neighborhood of }\hx, \ \ 
\\& \chi_j\geq \chi_{j+1} 
\ \ON \R^d \ \FOR j\in\N, \ \ 
\\&\lim_{j}\chi_j(x)=\bcases 0 &  \FOR x\not=\hx, \\
1 & \FOR x=\hx.
\ecases
\eald\right.
\]
Noting that, as $k\to +\infty$,  
\begin{flalign*}
\big((1-\chi_k)u\big)(x)+(\chi_k\varphi)(x) 
&
=
\big((1-\chi_{k+1})u\big)(x)+(\chi_{k+1}\varphi)(x)
+(\chi_k-\chi_{k+1})(\varphi-u)(x)
\\
&\geq ((1-\chi_{k+1})u)(x) +(\chi_{k+1}\varphi)(x) 
\\
&=
u(x)+(\chi_{k+1}(\varphi-u))(x)
\to u(x) \ \ \ \FOR x\in \R^d,
\end{flalign*}
by the Monotone Convergence Theorem we deduce from \eqr{eq1.3} that 
\[
z\mapsto u(\hx+j(\hx,z))-u(\hx)-\1_B(z)\lan D\varphi(\hx),j(\hx,z)\ran \in L^1(\R^d,\nu),
\]
and 
\[
\gl u(\hx)+H(\hx,D\varphi(\hx))\leq I(u,D\varphi(\hx),\hx),
\]
that is, {\em (iii)} holds. This shows that  {\em (i)}\,$\Rightarrow$\,{\em (iii)}.

Last, $\II \varphi(\hx)\in\R$ by Proposition \ref{lemma I well defined} and 
$\II(u,D\varphi(\hx),\hat x)>-\infty$ because it satisfies the inequality in item {\em (iii)}. 
This concludes the proof. 
\eproof

An analogous statement holds for the supersolution test. The result is the following.

\begin{theorem}\label{thm super equivalent}
Assume {\bf (N),\,(J1),\,(J2)}.  
Let $w\in\Bd(\R^d)\cap\LSC(\R^d)$ and let  $\varphi\in \C^2_{b}(\R^d)$ be such that 
$(w-\varphi)(\hat x)=\min(w-\varphi)$ at some point $\hat x\in \R^d$. The following are equivalent facts:
\begin{itemize}
\item[\em (i)] \quad $\gl w(\hx)-\II\varphi(\hat x)+H(\hat x,D\varphi(\hat x))\geq 0$;
\item[\em (ii)] \quad for every $\delta\in (0,1)$, 
\[
\gl w(\hx)-\II_\gd^1\varphi(\hat x)
-\II_\gd^2 (w,D\varphi(\hat x),\hat x)
+H(\hat x,D\varphi(\hat x))
\geq 0,
\] 
where 
\begin{align}
& \II_\gd^1\varphi(x):=\int_{B_\gd} \big(\varphi(x+j(x,z))-\varphi(x)-\du{j(x,z),D\varphi(x)}\big)\,\nu(dz), \label{def11}
\\&\II_\gd^2( w, p,x):=\int_{\R^d\stm B_\gd} \big(w(x+j(x,z))-w(x)-\1_B(z)\du{j(x,z),p}\big)\,\nu(dz). \label{def12}
\end{align}
\item[\em(iii)] \quad $
\gl w(\hx)-I(w,D\varphi(\hx),\hat x)+H(\hat x,D\varphi(\hat x))\geq 0,
$
where 
\[
{\II (w,p,x)}:=\int_{\R^d}\big(w(x+j(x,z))-w(x)-\1_B(z)\lan p,j(x,z)\ran\big)\,\nu(dz);
\]
\end{itemize}
Furthermore, if either one of the above facts holds, then 
\beq\label{eq inequality I}
\II(u,D\varphi(\hx),\hx)\geq \II_\gd^1\varphi(\hx)+\II_\gd^2(u,D\varphi(\hx),\hx)\geq 
\II\varphi(\hx)
\eeq
and each term in this inequality is finite. 
\end{theorem}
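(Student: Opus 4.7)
The plan is to mirror the proof of Theorem \ref{thm sub equivalent} with all inequalities reversed and with monotone approximations of $w$ taken from below rather than from above. First reduce to $(w-\varphi)(\hat x)=0$, so that $w\geq\varphi$ on $\R^d$ with equality at $\hat x$. Subtracting $\1_B(z)\langle D\varphi(\hat x),j(\hat x,z)\rangle$ from both sides of the pointwise inequality $w(\hat x+j(\hat x,z))-w(\hat x)\geq\varphi(\hat x+j(\hat x,z))-\varphi(\hat x)$ and then integrating separately over $B_\delta$ (where $\1_B\equiv 1$) and over $\R^d\setminus B_\delta$ produces the chain
\[
\II(w,D\varphi(\hat x),\hat x)\;\geq\;\II_\delta^1\varphi(\hat x)+\II_\delta^2(w,D\varphi(\hat x),\hat x)\;\geq\;\II\varphi(\hat x),
\]
in which $\II\varphi(\hat x)\in\R$ by Lemma \ref{lemma I well defined}. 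From this chain the implications (iii) $\Rightarrow$ (ii) $\Rightarrow$ (i) follow at once.

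For the converse direction (i) $\Rightarrow$ (iii), I would apply the dual of Lemma \ref{approx} to the bounded lower semicontinuous function $w$ to obtain a sequence $\{\psi_k\}\subset\C^2_b(\R^d)$ of uniformly bounded functions with $\psi_k\leq\psi_{k+1}\leq w$ and $\psi_k(x)\uparrow w(x)$ pointwise. Fixing a cutoff $\chi\in\C^\infty_c(\R^d)$ with $0\leq\chi\leq 1$ and $\chi\equiv 1$ on an open neighborhood $U\ni\hat x$, I set $\varphi_k:=(1-\chi)\psi_k+\chi\varphi$. Then $\varphi_k\in\C^2_b(\R^d)$, $\varphi_k\leq w$ (since both $\psi_k\leq w$ and $\varphi\leq w$), $\varphi_k=\varphi$ on $U$ (so $\varphi_k(\hat x)=w(\hat x)$ and $D\varphi_k(\hat x)=D\varphi(\hat x)$), and $\varphi_k\uparrow(1-\chi)w+\chi\varphi$ pointwise. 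Plugging $\varphi_k$ into hypothesis (i) gives
\[
\lambda w(\hat x)+H(\hat x,D\varphi(\hat x))\geq\II\varphi_k(\hat x)\qquad\text{for every }k\in\N.
\]

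I would then pass to the limit in two successive Monotone Convergence steps. The integrand of $\II\varphi_k(\hat x)$ is increasing in $k$ and admits a common $\nu$-integrable lower bound: near $z=0$ the identity $\varphi_k\equiv\varphi$ on $U$ combined with the $\C^2_b$-bound on $\varphi$ yields a Taylor-type estimate of order $|z|^2$ via \textbf{(J1)}, while away from $z=0$ (a region of finite $\nu$-mass by \textbf{(N)}) the integrand is bounded below by a constant depending on $\sup_k\|\psi_k\|_\infty$, $\|\varphi\|_\infty$, and $\|D\varphi(\hat x)\|$. Monotone Convergence then yields the same inequality with integrand built from $(1-\chi)w+\chi\varphi$. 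A second application, along a sequence $\{\chi_n\}\subset\C^\infty_c(\R^d)$ with $\chi_n\geq\chi_{n+1}\geq 0$, each identically $1$ on a shrinking neighborhood of $\hat x$, and $\chi_n\downarrow\1_{\{\hat x\}}$ pointwise, uses $\varphi-w\leq 0$ (which renders $(1-\chi_n)w+\chi_n\varphi$ monotone increasing in $n$ and convergent to $w$ off $\{\hat x\}$) to push the inequality all the way to (iii); the finiteness of all terms in the chain \eqref{eq inequality I} then follows from (iii) and Lemma \ref{lemma I well defined}. The principal difficulty I foresee is organizing these two uniform lower bounds so that Monotone Convergence applies cleanly in both steps; the crux is that each cutoff forces the test functions to coincide with $\varphi$ in a neighborhood of $\hat x$, allowing the singular behavior of $\nu$ near the origin to be absorbed by the fixed second-order estimate on $\varphi$ alone.
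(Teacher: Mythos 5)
Your proof is correct, but it takes a genuinely different and considerably longer route than the paper's. The paper disposes of Theorem \ref{thm super equivalent} in three lines: it observes that $w$ satisfying the supersolution test at $\hat x$ is equivalent to $u:=-w$ satisfying the subsolution test at $\hat x$ for the equation $\gl u-\II u+\tilde H(x,Du)=0$ with $\tilde H(x,p):=-H(x,-p)$ (the nonlocal operator $\II$ is linear, so $\II(-w)=-\II w$, and conditions {\bf (N)}, {\bf (J1)}, {\bf (J2)} involve only $j$ and $\nu$, not $H$), and then simply invokes Theorem \ref{thm sub equivalent} applied to $u$ and $\tilde H$. Your approach instead rebuilds the entire argument from scratch — mirroring the decomposition chain \eqref{eq inequality I}, constructing an increasing sequence $\psi_k\uparrow w$ via the LSC analogue of Lemma \ref{approx}, forming the hybrid test functions $\varphi_k=(1-\chi)\psi_k+\chi\varphi$, and running two Monotone Convergence passes (over $k$ and then over the shrinking cutoffs $\chi_n$). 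This works, and your handling of the two MCT steps is sound: the integrands are increasing, the $\II\varphi$-integrand from Lemma \ref{lemma I well defined} serves as the common $L^1(\nu)$ lower bound in both passes, and the finite upper bound $\gl w(\hat x)+H(\hat x,D\varphi(\hat x))$ forces the limit integral to be finite. One small point worth making explicit is that the inequality \eqref{eq1.3}-analogue from the first MCT must be available for each $\chi_n$ (not just one fixed $\chi$) before the second MCT can be run over $n$; this is easily arranged since the first step is uniform in the choice of cutoff, but it deserves a sentence. The trade-off between the two approaches is clear: your direct route makes the supersolution case self-contained and symmetric with the subsolution proof, while the paper's negation trick buys brevity by exploiting the structural symmetry $\II(-w)=-\II w$ once and for all.
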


\begin{proof}
The fact that item {\em (i)} holds amounts to saying that the upper semicontinuous function  
$u:=-w$ satisfies the subsolution test at $\hat x$ for the following nonlocal HJ equation
\[
\gl u-\II u-H (x,-Du)=0  \ \ \IN \T^d. 
\]
The equivalence of (i), (ii) and (iii) follows by 
by applying Theorem \ref{thm sub equivalent} to $u:=-w$ with $\tilde H(x,p):=-H(x,-p)$ in place of $H$. 
\end{proof}

We record here for further use the following application of Theorem \ref{thm sub equivalent}.

\begin{proposition}\label{prop pointwise subsol}
Let $\gl\geq 0$ and let $u$ be a bounded and semiconvex function on $\R^d$. Let us denote by $\Sigma_u$ the negligible subset of $\R^d$ made up by points where $u$ is not twice differentiable. 
Then $u$ is a viscosity subsolution to \eqref{eq discounted} if and only if 
\beq\label{eq pointwise subsolution}
\gl u(x)-\II u(x)+H (x,Du(x))\leq 0\quad\FORALL x\in\R^d\setminus \Sigma_u.
\eeq
\end{proposition}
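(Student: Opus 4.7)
The plan is to prove the two implications using the equivalent formulations of the subsolution condition furnished by Theorem~\ref{thm sub equivalent}, combined with Aleksandrov's Theorem and a Jensen-type perturbation argument.

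For the implication ``viscosity subsolution $\Rightarrow$ pointwise a.e.\ inequality'', I fix $x_0\in \R^d\setminus\Sigma_u$ with the second-order Taylor expansion \eqref{eq Alexandroff}, and set $p_0:=Du(x_0)$, $B_0:=D^2u(x_0)$. For $\gep>0$ small the quadratic polynomial $x\mapsto u(x_0)+\langle p_0,x-x_0\rangle+\tfrac12\langle(B_0+\gep I)(x-x_0),x-x_0\rangle$ dominates $u$ on a small ball $B_\rho(x_0)$ by the Taylor expansion; gluing it via a smooth cutoff to a sufficiently large constant outside $B_\rho(x_0)$ produces a test function $\varphi\in\C^2_b(\R^d)$ with $\varphi\geq u$ on $\R^d$, $\varphi(x_0)=u(x_0)$, $D\varphi(x_0)=p_0$. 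The viscosity subsolution definition then gives $\gl u(x_0)-\II\varphi(x_0)+H(x_0,p_0)\leq 0$, which by Theorem~\ref{thm sub equivalent}(iii) rewrites as $\gl u(x_0)-\II(u,p_0,x_0)+H(x_0,p_0)\leq 0$. At the twice-differentiable point $x_0$, the integrand defining $\II(u,p_0,x_0)$ belongs to $L^1(\R^d,\nu)$: for $|z|$ small it is controlled by the Taylor expansion and $|j(x_0,z)|\leq C_j|z|$ from \textbf{(J1)}, and for $|z|$ large by boundedness of $u$ together with \textbf{(N)}. Hence $\II(u,p_0,x_0)$ coincides with the classical pointwise value $\II u(x_0)$, yielding \eqref{eq pointwise subsolution}.

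The converse is more delicate because the touching point $\hx$ of a generic test function $\varphi\in\C^2_b$ may belong to $\Sigma_u$. Assume WLOG $u(\hx)=\varphi(\hx)$, so $u\leq\varphi$ globally. Perturb $\varphi$ into $\varphi_\gep(x):=\varphi(x)+\gep|x-\hx|^2$: then $u-\varphi_\gep$ is semiconvex (a semiconvex function minus a $C^2$ function) and attains a strict global maximum at $\hx$. By a standard Jensen-type perturbation argument applied to $u-\varphi_\gep$, for each $\gep>0$ there exist sequences $p_n\to 0$ and $x_n\to\hx$ inside a fixed ball $B_r(\hx)$ such that $u-\varphi_\gep+\langle p_n,\cdot\rangle$ attains a local maximum at $x_n$ and $u$ is twice differentiable at $x_n$. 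Differentiation at $x_n$ gives $Du(x_n)=D\varphi(x_n)+2\gep(x_n-\hx)-p_n$, which converges to $D\varphi(\hx)$ as $n\to\infty$. Locally in $B_r(\hx)$, the max property yields the pointwise majorization
\[
u(y)\leq u(x_n)+\psi_n(y)-\psi_n(x_n),\qquad \psi_n(y):=\varphi(y)+\gep|y-\hx|^2-\langle p_n,y\rangle,
\]
with $D\psi_n(x_n)=Du(x_n)$.

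The conclusion combines the standing a.e.\ inequality at $x_n$ with the equivalent decomposition in Theorem~\ref{thm sub equivalent}(ii). Fix $\gd\in(0,1)$ small enough that $B_{C_j\gd}(x_n)\subset B_r(\hx)$ for all large $n$; then the majorization, together with $D\psi_n(x_n)=Du(x_n)$, forces $\II_\gd^1 u(x_n)\leq\II_\gd^1\psi_n(x_n)$. Hence
\[
\gl u(x_n)+H(x_n,Du(x_n))\leq\II_\gd^1\psi_n(x_n)+\II_\gd^2(u,Du(x_n),x_n).
\]
Sending $n\to\infty$ (using continuity of $u$ and $H$, continuity of $\II_\gd^1\psi$ granted by Lemma~\ref{lemma I well defined}, and dominated convergence for $\II_\gd^2$ using boundedness of $u$ and \textbf{(N)}), then letting $\gep\to 0$ (noting that $\gep\int_{B_\gd}|j(\hx,z)|^2\,\nu(dz)\to 0$ by \textbf{(J1)} and \textbf{(N)}), one obtains
\[
\gl u(\hx)+H(\hx,D\varphi(\hx))\leq\II_\gd^1\varphi(\hx)+\II_\gd^2(u,D\varphi(\hx),\hx),
\]
which by Theorem~\ref{thm sub equivalent}(ii) is equivalent to the viscosity subsolution inequality for $\varphi$ at $\hx$. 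The main obstacle is executing the Jensen perturbation step while passing to the limit in the nonlocal term; once the local majorization on $\II_\gd^1 u(x_n)$ is available, with $\gd$ chosen uniformly in $n$, the rest is a routine compactness argument.
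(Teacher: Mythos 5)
Your argument is correct in both directions. The ``only if'' direction is essentially the paper's: at a point of twice differentiability you build a bounded $\C^2$ test function touching $u$ from above (the paper just cites \cite[Section 2]{users} for this), invoke Theorem~\ref{thm sub equivalent}(iii), and identify $\II(u,Du(x_0),x_0)$ with the pointwise value $\II u(x_0)$. The ``if'' direction is where you diverge. The paper isolates the Jensen-type step into Proposition~\ref{max1}, which produces \emph{global} test functions $\varphi_k\in\C_{\mathrm b}^2$ with $u\leq\varphi_k$ on all of $\R^d$, equality at a twice-differentiability point $x_k\to\hx$, and $\varphi_k\to\varphi$ in $\C^2$; it then feeds the a.e.\ inequality at $x_k$ through the comparison $\II u(x_k)\leq\II\varphi_k(x_k)$ and passes to the limit using $\II\varphi_k\to\II\varphi$ (Lemma~\ref{lemma I well defined}). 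You instead redo the Jensen perturbation by hand after an $\gep|x-\hx|^2$ inflation, keep the touching function only \emph{locally} majorizing, and exploit the split $\II=\II_\gd^1+\II_\gd^2$ from Theorem~\ref{thm sub equivalent}(ii): the local majorization handles $\II_\gd^1$, while $\II_\gd^2$ only needs boundedness of $u$ and {\bf (N)}, so no global gluing is required. What your route buys is that you avoid building the global $\C^2_{\mathrm b}$ test functions and the somewhat implicit ``$\II\varphi_k\to\II\varphi$'' step; the limits on $\II_\gd^1\psi_n(x_n)$ and $\II_\gd^2(u,Du(x_n),x_n)$ are each controlled by routine dominated convergence. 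What it costs is a slightly longer chain of limits ($n\to\infty$, then $\gep\to0$) and one small imprecision at the end: having the (ii)-inequality for a single $\gd\in(0,1)$ is not literally ``(ii)'', but it does imply (i) through the chain \eqref{eq inequality I}, so the conclusion is the same. Two other small points worth tightening if you were to write this up: Lemma~\ref{lemma I well defined} is stated for $\II$, not $\II_\gd^1$, so the joint continuity $\II_\gd^1\psi_n(x_n)\to\II_\gd^1(\varphi+\gep|\cdot-\hx|^2)(\hx)$ should be argued directly from $\psi_n\to\varphi+\gep|\cdot-\hx|^2$ in $\C^2$ with uniformly bounded second derivatives; and you should note that $\II_\gd^1 u(x_n)$ is finite before splitting $\II u(x_n)=\II_\gd^1 u(x_n)+\II_\gd^2(u,Du(x_n),x_n)$ -- this follows from the two-sided bound (semiconvexity below, local majorization above) that you implicitly use.
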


\bproof
Let us first prove the {\em only if} part. Pick a point $\hx\in\R^d\setminus\Sigma_u$. Being $u$ twice differentiable at $\hx$, according to \cite[Section 2]{users} there exists a function $\varphi\in\C^2(\R^d)$ such that $u-\varphi$ has a local maximum at $\hx$. Up to suitably modifying $\varphi$, we can furthermore assume that $\hx$ is a global maximum point of $u-\varphi$ and that $\varphi$ is bounded, being $u$ bounded. Since $Du(\hx)=D\varphi(\hx)$, we derive that  
$\II (u,D\varphi(\hx),\hx)=\II (u,D u(\hx),\hx)=\II u(\hx)$. Being $u$ a subsolution to  \eqref{eq discounted}, we infer, according to Theorem \ref{thm sub equivalent}, 
\[
\gl u(\hx)-\II u(\hx)+H (\hx,Du(\hx))
=
\gl u(\hx)-\II (u,D\varphi(\hx),\hx)+H (\hx,D\varphi(\hx))
\leq 0,
\]
as it was to be shown. 

Let us now prove the {\em if} part. 
Let $\varphi\in \C^2_{{b}}(\R^d)$ be such that 
$u-\varphi$ has a strict maximum at some point $\hx\in\Rd$. By semiconcavity, $u$ is differentiable at $\hx$ and $Du(\hx)=D\varphi(\hx)$, hence $\II (u,D\varphi(\hx),\hx)=\II (u,D u(\hx),\hx)=\II u(\hx)$. 
Furthermore, by Proposition \ref{max1}, there exists a sequence 
$(\varphi_k,x_k)\in \C^2(\R^d)\tim \R^d$ such that  $x_k\in\Rd\setminus\Sigma_u$ and $u-\varphi_k$ attains a global maximum at $x_k$, for each $k\in\N$,  and $x_k\to\hx$, $\varphi_k \to \varphi \ \IN \C^2(\R^d)$ as $k\to +\infty$. 
We derive that $D\varphi_k (x_k)=D u(x_k)$ and 
$\II\varphi_k(x_k)=\II (u,D\varphi_k(x_k),x_k)=\II (u,D u(x_k),x_k)=\II u(x_k)$, hence, by making use of \eqref{eq pointwise subsolution}, we get
\[
\gl u(x_k)-\II \varphi_k(x_k)+H (x_k,D\varphi_k(x_k))
=
\gl u(x_k)-\II u(x_k)+H (x_k,D u(x_k))
\leq 0.
\]
By sending $k\to +\infty$, from the fact that $\II\varphi_k\to\II\varphi$ in $\C(\Rd)$ we derive
\[
\gl u(\hx)-\II \varphi(\hx)+H (\hx,Du(\hx))\leq 0.
\]
The proof is complete.
\eproof

\subsection{Existence and uniqueness of discounted solutions}
We proceed with the analysis of equation \eqref{eq general discounted} by focusing on the periodic case $M=\M$ with $\lambda>0$, i.e., 
\begin{equation}\label{eq0}
\gl u-\II u+H (x,Du)=0 \ \ \ \IN \T^d \tag{HJ$^\lambda_\M$}.
\end{equation}
Throughout this section, we will assume conditions {\bf (N),\,(J1),\,(J2)} and {\bf (H1)} with $M:=\M$. 
%
%

%
%

We proceed by showing that, for $\lambda>0$, equation \eqref{eq0} satisfies a comparison principle.

\begin{proposition}\label{prop comparison}  Assume {\bf (N),\,(J1),\,(J2),\,(H1)}  
and $\lambda>0$. Let $v\in\Bd(\T^d)\cap\USC(\T^d)$ and $w\in\Bd(\T^d)\cap\LSC(\T^d)$ be sub and super solutions to \eqref{eq0}, 
respectively. 
Then $v\leq w$ on $\T^d$. 
\end{proposition}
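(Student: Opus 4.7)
The strategy is a doubling-of-variables argument adapted to the nonlocal setting, where the test functions are obtained by localizing the natural quadratic penalty and the equivalent reformulations of the sub/supersolution tests from Theorems \ref{thm sub equivalent} and \ref{thm super equivalent} are used to split the nonlocal operator into a small-jump part (handled by the test function) and a large-jump part (handled by $v,w$ themselves). Suppose for contradiction that $\sigma:=\sup_\M(v-w)>0$. Lifting $v,w$ to $\Z^d$-periodic functions on $\R^d$ and setting
\[
\Phi_\alpha(x,y):=v(x)-w(y)-\tfrac{\alpha}{2}|x-y|^2,
\]
periodicity together with upper semicontinuity of $v-w$ and coercivity of the penalty guarantees that $\Phi_\alpha$ attains its supremum at some $(x_\alpha,y_\alpha)\in\R^d\times\R^d$ with $|x_\alpha-y_\alpha|$ uniformly bounded in $\alpha$. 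The classical doubling lemma then gives $\alpha|x_\alpha-y_\alpha|^2\to 0$ and $v(x_\alpha)-w(y_\alpha)\to\sigma$ as $\alpha\to+\infty$. Set $p_\alpha:=\alpha(x_\alpha-y_\alpha)$. Choose a smooth cutoff $\chi$ with $\chi\equiv 1$ in a small neighborhood $U$ of $x_\alpha$ contained in a chart of $\M$, and set $\varphi(x):=\chi(x)\tfrac{\alpha}{2}|x-y_\alpha|^2+(1-\chi(x))K$ with $K$ so large that $v\leq\varphi$ globally; then $\varphi\in\C^2_\mathrm{b}(\M)$, $v-\varphi$ attains a global maximum at $x_\alpha$, and $D\varphi(x_\alpha)=p_\alpha$. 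Construct $\psi\in\C^2_\mathrm{b}(\M)$ analogously for $w$ at $y_\alpha$ with $D\psi(y_\alpha)=p_\alpha$.

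Fix $\delta\in(0,1)$ small enough, by \textbf{(J1)}, that $x_\alpha+j(x_\alpha,z)$ and $y_\alpha+j(y_\alpha,z)$ lie in the region where $\varphi,\psi$ equal their quadratic expressions for every $z\in B_\delta$. Applying item (ii) of Theorem \ref{thm sub equivalent} to $v$ at $x_\alpha$ with $\varphi$, the corresponding item of Theorem \ref{thm super equivalent} to $w$ at $y_\alpha$ with $\psi$, and subtracting yields
\[
\lambda\bigl(v(x_\alpha)-w(y_\alpha)\bigr)\leq\bigl[H(y_\alpha,p_\alpha)-H(x_\alpha,p_\alpha)\bigr]+\Delta_\delta^1+\Delta_\delta^2,
\]
where $\Delta_\delta^1:=\II_\delta^1\varphi(x_\alpha)-\II_\delta^1\psi(y_\alpha)$ and $\Delta_\delta^2:=\II_\delta^2(v,p_\alpha,x_\alpha)-\II_\delta^2(w,p_\alpha,y_\alpha)$. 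By \textbf{(H1)} the Hamiltonian difference is bounded by $\omega_H\bigl(|x_\alpha-y_\alpha|+\alpha|x_\alpha-y_\alpha|^2\bigr)$, which vanishes as $\alpha\to+\infty$. A direct Taylor computation using the explicit quadratic form of $\varphi,\psi$ near their base points together with \textbf{(J1)} yields $\Delta_\delta^1\leq\alpha C_j^2\int_{B_\delta}|z|^2\nu(dz)$, which by \textbf{(N)} is $o_\delta(1)$ for every fixed $\alpha$.

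The core difficulty is the estimate of $\Delta_\delta^2$. Using the global-max property of $\Phi_\alpha$ applied to the perturbed pair $(x_\alpha+j(x_\alpha,z),y_\alpha+j(y_\alpha,z))$, and writing $\Delta j:=j(x_\alpha,z)-j(y_\alpha,z)$, the integrand defining $\Delta_\delta^2$ is bounded pointwise by
\[
\alpha\,\1_{\R^d\setminus B}(z)\,\langle x_\alpha-y_\alpha,\Delta j\rangle+\tfrac{\alpha}{2}|\Delta j|^2.
\]
The linear piece integrates against $\nu$ to at most $C_1\alpha|x_\alpha-y_\alpha|^2$ by \textbf{(J2)} applied with $R=1$. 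The quadratic piece is the main obstacle, since neither \textbf{(J1)} nor \textbf{(J2)} affords $L^2$-control of $\Delta j$ against $\nu$ at infinity. The plan is to split the domain of integration at an auxiliary radius $R>1$. On $(B\setminus B_\delta)\cup(B_R\setminus B)$ use \textbf{(J1)} to write $|\Delta j|^2\leq C_j^2|x_\alpha-y_\alpha|^2|z|^2$ and \textbf{(N)} (together with $\nu(\R^d\setminus B)\leq C_\nu$) to bound $\int|z|^2\nu(dz)$ on that region by $(1+R^2)C_\nu$, giving a contribution $\leq C(R)\,\alpha|x_\alpha-y_\alpha|^2$. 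On $\R^d\setminus B_R$ the drift $\1_B$ already vanishes; one abandons the max-inequality bound in favor of the crude estimate $|\mathrm{integrand}|\leq 2(\|v\|_\infty+\|w\|_\infty)$, producing a tail contribution $\leq 2(\|v\|_\infty+\|w\|_\infty)\nu(\R^d\setminus B_R)$, which tends to $0$ as $R\to+\infty$ by \textbf{(N)}.

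Collecting everything, for any admissible $\delta\in(0,1)$ and $R>1$,
\[
\lambda\bigl(v(x_\alpha)-w(y_\alpha)\bigr)\leq\omega_H\bigl(|x_\alpha-y_\alpha|+\alpha|x_\alpha-y_\alpha|^2\bigr)+\alpha C_j^2\!\!\int_{B_\delta}\!\!|z|^2\nu(dz)+C(R)\,\alpha|x_\alpha-y_\alpha|^2+\eta(R),
\]
with $\eta(R):=2(\|v\|_\infty+\|w\|_\infty)\nu(\R^d\setminus B_R)\to 0$. Given $\varepsilon>0$, first pick $R$ so large that $\eta(R)<\varepsilon/2$; then, for each $\alpha$, pick $\delta(\alpha)\to 0$ so rapidly that $\alpha\int_{B_{\delta(\alpha)}}|z|^2\nu(dz)<\varepsilon/2$. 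Letting $\alpha\to+\infty$ makes the Hamiltonian and the $C(R)\alpha|x_\alpha-y_\alpha|^2$ terms vanish, so $\lambda\sigma\leq\varepsilon$. Since $\varepsilon>0$ is arbitrary and $\lambda>0$, we conclude $\sigma\leq 0$, contradicting $\sigma>0$.
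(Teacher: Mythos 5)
Your proof is correct and follows essentially the same route as the paper: doubling of variables with the quadratic penalty, the pointwise max-inequality bound for the nonlocal integrand (using the maximality of $\Phi_\alpha$ at $(x_\alpha,y_\alpha)$ against perturbed points), and the radial split at $R$ with the large-jump tail controlled by $\|v\|_\infty+\|w\|_\infty$ and $\nu(\R^d\setminus B_R)$ via \textbf{(N)}, followed by sending $\alpha\to+\infty$ and then $R\to+\infty$. The one divergence is that you invoke item (ii) of Theorems \ref{thm sub equivalent} and \ref{thm super equivalent}, which forces you to carry an auxiliary small-jump radius $\delta$, compute the test-function contribution $\II^1_\delta\varphi$, $\II^1_\delta\psi$ explicitly, and then send $\delta=\delta(\alpha)\to 0$ at a rate matching $\alpha$; the paper instead applies item (iii), which lets the full operator act directly on $v$ (resp.\ $w$) against the slope $p_\alpha$, so the extra term $\alpha C_j^2\int_{B_\delta}|z|^2\,\nu(dz)$ and its accompanying limit never arise. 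Your version is therefore a bit longer but no less rigorous, and you are in fact more explicit than the paper about the cutoff/truncation needed to produce a genuinely bounded $\C^2_{\mathrm{b}}$ test function from the unbounded quadratic penalty.
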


\bproof Suppose, by contradiction, that $\max\limits_\M(v-w)>0$. Let $k\in\N$ and consider the function 
\[
\Phi_k(x,y)=v(x)-w(y)-k|x-y|^2 \ \ \ON \R^{2d},
\]
where $v,w$ are considered as periodic functions on $\R^d$. Let $(x_k,y_k)$ be a 
maximum point of $\Phi_k$. We may assume, by passing to a subsequence, that 
\[
\lim_k k|x_k-y_k|^2=0 \quad \AND\quad \lim_k x_k=x_0
\]
for some $x_0\in\R^{d}$. It follows that $\lim_k y_k=x_0$. 
The standard observation is that 
\[
\lim_k v(x_k)=v(x_0), \quad \lim_k w(y_k)=w(x_0), \quad\AND\quad (v-w)(x_0)=\max(v-w)>0.
\]
Since  
\[
v(x_k)-w(y_k)-k|x_k-y_k|^2\geq v(\cdot)-w(y_k)-k|\cdot-y_k|^2 \ \  \IN \R^d,
\] 
in view of Theorem \ref{thm sub equivalent}
we obtain
\beq\label{eq5}
\gl v(x_k)-\II (v,2k(x_k-y_k),x_k)+H (x_k,2k(x_k-y_k))\leq 0,
\eeq
where 
\[
{\II (v,p,x)}:=\int_{\R^d}\big(v(x+j(x,z))-v(x)-\1_B(z)\lan p,j(x,z)\ran\big)\,\nu(dz);
\]
Similarly, 
since  
\[
v(x_k)-w(y_k)-k|x_k-y_k|^2\geq v(x_k)-w(\cdot)-k|x_k-\cdot|^2 \ \  \IN \R^d,
\] 
in view of Theorem \ref{thm super equivalent} we obtain 
\beq\label{eq6}
\gl w(y_k)-\II (w,2k(x_k-y_k),y_k)+H (y_k,k(x_k-y_k))\geq 0.
\eeq
By subtracting \eqref{eq6} from \eqref{eq5} and by using assumption (H1), we get 
\begin{flalign}\label{eq6a}
\gl (v(x_k)&-w(y_k))
\\
&\leq 
\omega_H\left(|x_k-y_k| +2k|x_k-y_k|^2\right)+
\II (v,2k(x_k-y_k),x_k)
-
\II (w,2k(x_k-y_k),y_k). \nonumber
\end{flalign}
We want estimate the right-hand side of \eqref{eq6a}. To this aim, we start with remarking that, by definition of $(x_k,y_k)$, for all $z\in\R^d$ we have 
\begin{flalign*}
v(x_k)-w(y_k)&-k|x_k-y_k|^2\\
&\geq 
v(x_k+j(x_k,z))-w(y_k+j(y_k,z))-k|x_k+j(x_k,z)-y_k-j(y_k,z)|^2, 
\end{flalign*}
yielding
\begin{flalign*}
\Big(
v(x_k+j(x_k,z))
-
v(x_k)
\Big)
&-
\Big(
w(y_k+j(y_k,z))-w(y_k)
\Big)
\\
&\leq
k|j(x_k,z)-j(y_k,z)|^2
+
2\langle k(x_k-y_k),j(x_k,z)-j(y_k,z)\rangle.
\end{flalign*}
For every $R>1$, we derive
\begin{flalign*}
\II (v,&2k(x_k-y_k),x_k)
-
\II (w,2k(x_k-y_k),y_k)
\leq
2(\|v\|_\infty+\|w\|_\infty) \nu\left(\R^d\setminus B_R\right)\\ 
&+
k \int_{B_R} |j(x_k,z)-j(y_k,z)|^2\,\nu(dz) 
+
2k |x_k-y_k| \int_{B_R\setminus B}  |j(x_k,z)-j(y_k,z)|\,\nu(dz)
\\
&{\leq}		
2(\|v\|_\infty+\|w\|_\infty)\nu\left(\R^d\setminus B_R\right)
+
(R^2C_jC_\nu+2C_1) \,
k|x_k-y_k|^2,
\end{flalign*}
where, for the last inequality, we have used the assumptions {\bf (N), (J1), (J2)}. 
By sending first $k\to +\infty$ and then $R\to +\infty$ in the previous inequality, we derive, in view of assumption {\bf (N)}, that 
\[
\limsup_{k\to +\infty}
\Big( 
\II (v,2k(x_k-y_k),x_k)
-
\II (w,2k(x_k-y_k),y_k)
\Big)
\leq 
0.
\]
By sending $k\to+\infty$ in \eqref{eq6a}, we finally  conclude that 
$\gl(v-w)(x_0)\leq 0$, leading to a contradiction. 
\eproof

Next, we prove the existence of a (unique) continuous solution to \eqref{eq0} when $\lambda>0$. 

\begin{theorem} \label{lem.ma14} 
 Assume {\bf (N),\,(J1),\,(J2),\,(H1)}  
and $\lambda>0$.
Then there is a unique solution $u\in \C(\T^d)$ to 
\eqref{eq0}. 
\end{theorem}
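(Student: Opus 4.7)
\medskip

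\textbf{Plan.} The strategy is Perron's method, built on the comparison principle of Proposition \ref{prop comparison} together with the sup/inf stability given by Proposition \ref{prop sup inf}.

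\emph{Step 1: Barriers.} Since $H(\cdot, 0) \in \C(\M)$ is bounded, set $K := \sup_{\M}|H(\cdot, 0)|$. I claim that the constants $u^{-}(x) \equiv -K/\lambda$ and $u^{+}(x) \equiv K/\lambda$ are, respectively, a viscosity sub- and supersolution of \eqref{eq0}. Indeed, if $\varphi \in \C^{2}_{\mathrm b}(\M)$ touches $u^{-}$ from above at some point $\hat x$, then $\varphi$ attains its global minimum at $\hat x$, so $D\varphi(\hat x)=0$ and $\mathcal{I}\varphi(\hat x)\geq 0$ (the integrand in the definition of $\mathcal{I}\varphi(\hat x)$ is nonnegative). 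Hence
\[
\lambda u^{-}(\hat x) - \mathcal{I}\varphi(\hat x) + H(\hat x, D\varphi(\hat x)) \ \leq\ -K + H(\hat x, 0)\ \leq\ 0.
\]
The verification for $u^{+}$ is symmetric.

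\emph{Step 2: The Perron candidate.} Define
\[
u(x) := \sup\bigl\{\,v(x) \,\mid\, v\in \Bd(\M)\ \text{is a subsolution of \eqref{eq0}},\ u^{-}\leq v\leq u^{+}\,\bigr\}.
\]
The set is non-empty (it contains $u^-$) and uniformly bounded, so $u\in \Bd(\M)$. By Proposition \ref{prop sup inf}(a), the upper semicontinuous envelope $u^{*}$ is a subsolution of \eqref{eq0}. The heart of the argument is to show that $u_{*}$ is a supersolution. If not, there exist $\hat x\in \M$ and $\varphi\in \C^{2}_{\mathrm b}(\M)$ such that $u_{*}-\varphi$ attains a strict minimum at $\hat x$, $u_{*}(\hat x)=\varphi(\hat x)$, and
\[
\lambda \varphi(\hat x) - \mathcal{I}\varphi(\hat x) + H(\hat x, D\varphi(\hat x)) < 0.
\]
Continuity of $x\mapsto \mathcal{I}\varphi(x)$ (Lemma \ref{lemma I well defined}) and of $H$ propagates this strict inequality to a neighborhood $V$ of $\hat x$, so for $\eta>0$ small the function $\varphi+\eta$ is a classical subsolution on $V$; by strict minimality, $V$ can also be chosen so that $u_{*}>\varphi+\eta$ in a neighborhood of $\partial V$. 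The bumped function $\tilde u:=\max(u,\varphi+\eta)$ on $V$, extended by $u$ off $V$, is then an admissible subsolution that strictly exceeds $u$ at $\hat x$, contradicting the maximality in the definition of $u$. Hence $u_{*}$ is a supersolution.

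\emph{Step 3: Conclusion.} The comparison principle (Proposition \ref{prop comparison}) applied to the subsolution $u^{*}$ and the supersolution $u_{*}$ yields $u^{*}\leq u_{*}$; combined with the always-true $u_{*}\leq u\leq u^{*}$, this forces $u=u_{*}=u^{*}\in \C(\M)$, so $u$ is a continuous viscosity solution of \eqref{eq0}. Uniqueness is immediate from Proposition \ref{prop comparison} applied to any two continuous solutions in both directions.

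\emph{Main obstacle.} The delicate point is the gluing step in Perron's argument: verifying that $\tilde u$ remains a (global) viscosity subsolution despite the long-range coupling encoded by $\mathcal{I}$. The natural resolution uses Theorem \ref{thm sub equivalent}, which allows one to test the subsolution inequality with the nonlocal term evaluated on the actual function $\tilde u$ rather than on the test function. Combined with the monotonicity $v_{1}\leq v_{2}\ \Rightarrow\ \mathcal{I}(v_{1},p,x)\leq \mathcal{I}(v_{2},p,x)$ at points where $v_{1}(x)=v_{2}(x)$, and with the fact that $\tilde u\equiv u$ outside $V$ (so gluing occurs in a region where the max is realized by $u$), this yields the subsolution inequality for $\tilde u$ at every point where a test function touches it from above, whether that contact corresponds to $u$ or to $\varphi+\eta$.
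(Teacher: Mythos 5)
Your proposal is correct and follows essentially the same route as the paper: constant barriers, Perron envelope, Proposition \ref{prop sup inf}(a) for the subsolution property of $u^{*}$, a bump-and-contradiction argument for $u_{*}$ being a supersolution, and comparison to conclude $u^{*}\le u_{*}$. The only cosmetic difference is in the bump: the paper perturbs the test function by a shrinking cap $\psi_\eps(x)=\psi(x)+\eps^{2}\chi((x-\hat x)/\eps)$ (supported in $B_\eps(\hat x)$, so the set where the max is realized by the bumped function automatically collapses into an arbitrarily small ball as $\eps\to 0^{+}$), whereas you shift by a constant $\eta$ and shrink the gluing neighborhood $V$ instead; both work. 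You also correctly identify the genuine subtlety for nonlocal equations — that the glued function must pass the subsolution test globally, which one handles via Theorem \ref{thm sub equivalent} and the monotonicity of $\mathcal{I}(\cdot,p,x)$ under pointwise ordering at the test point, exactly as in the paper's use of Proposition \ref{prop sup inf}(a).
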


\bproof 
We are going to show the existence of a continuous solution to \eqref{eq0}, which is also unique due to Proposition  \ref{prop comparison}. To this aim, we first note that any constant function $u$ satisfies $\II u(x)=0$ for all $x\in\T^d$. Hence, 
for a suitable large constant $C>0$, the functions $v_0(\cdot)= -C$ and $w_0(\cdot)=C$ are, respectively, a sub and a supersolution to \eqref{eq0}. We define a function $u:\M\to\R$ by setting  
\beq
u(x):=\sup\left\{ v(x)\,:\, \hbox{$v\in\USC(\M)$ is a subsolution to \eqref{eq0},\ $v\geq-C$ on $\M$}\,\right\}
\eeq
for every $x\in\M$. 
By the Comparison Principle stated in Proposition \ref{prop comparison}, we know that all subsolutions $v\in\USC(\M)$ of \eqref{eq0} satisfy $v\leq C$ on $\M$, hence the same holds for $u$. 
Let us denote by $u^*,\, u_*$ the upper and lower semicontinuous envelope of $u$ defined according to \eqref{def semicontinuous envelope}.
By Proposition \ref{prop sup inf}-(a) we know that $u^*$ is a (upper semicontinuous) subsolution of \eqref{eq0}. Let us show that $u_*$ is a (lower semicontinuous) supersolution of \eqref{eq0}. Let us assume by contradiction that there exists $\psi\in \C^2(\M)$ such that $u_*-\psi$ attains a global strict  minimum at $\hat x$ with $(u_*-\psi)(\hat x)=0$ and 
\[
\lambda \psi(\hat x)-\II \psi(\hat x)+H (\hat x, D\psi(\hat x))<0.
\]
By the fact that $\psi$ is of class $C^2$ and in view of Lemma \ref{lemma I well defined}, we infer that there exists $\rho_0>0$ such that 
\beq\label{eq local subsol}
\lambda \psi(x)-\II \psi(x)+H (x, D\psi(x))<0
\qquad
\hbox{for all $x\in B_{2\rho_0}(\hat x)$}.
\eeq 
Let us pick a bump function $\chi\in C^2(\M)$ such that $\chi\geq 0$ on $\M$, $\chi(0)=1$ and $\supp(\chi)\subset B_1$. For every $\ep>0$, let us set
\[
\psi_\ep(x):=\psi(x)+\ep^2\chi\left(\frac{x-\hat x}{\ep}\right),
\quad
u_\eps(x):=\max\{u(x),\psi_\ep(x)\}
\qquad
\hbox{for all $x\in\M$.} 
\]
For every $\eps>0$ small enough, $u_\eps=u$ in $\M\setminus B_{\rho_0}(\hat x)$ and the inequality \eqref{eq local subsol} holds with $\psi_\eps$ in place of $\psi$. By Proposition \ref{prop sup inf}-(a) we infer that $u_\eps^*$ is a subsolution of \eqref{eq0} satisfying $u^*_\eps\geq -C$ on $\M$. This contradicts the definition of $u$ since $u^*_\eps(\hat x)\geq u_\eps(\hat x)=\psi_\eps(\hat x)>\psi(\hat x)=u(\hat x)$. 
\eproof

\subsection{The critical equation} Let $H$ be a Hamiltonian satisfying {\bf (H1)} and consider the following nonlocal HJ equation 
\beq\label{eq eikonal HJ a}
-\II u+H(x,Du)=a\quad \IN\ \M,\tag{HJ$_{a}$}
\eeq
where $a\in\R$ and  {\bf (N),\,(J1),\,(J2)} are in force. We first show that equation \eqref{eq eikonal HJ a} admits subsolutions when $a$ is large enough. 
 
\begin{lemma}\label{lemma good definition}
Let $u\in\C^2(\M)$. Then $u$ is a subsolution to \eqref{eq eikonal HJ a} with 
\[
a:=\|\II u\|_\infty+\|H(x,Du(x))\|_\infty.
\]  
\end{lemma}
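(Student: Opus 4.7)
The argument should be essentially immediate once one registers that $\M$ is compact, so any $u\in\C^2(\M)$ automatically belongs to $\C^2_{\mathrm{b}}(\M)$. In particular, Lemma \ref{lemma I well defined} ensures that $\II u$ is a bounded continuous function on $\M$, and $x\mapsto H(x,Du(x))$ is continuous and hence bounded on the compact torus, so the constant
\[
a=\|\II u\|_\infty+\|H(\cdot,Du(\cdot))\|_\infty
\]
is a well-defined finite real number.

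Next I would verify the subsolution test directly from Definition \ref{def1}. Fix $\varphi\in\C^2_{\mathrm{b}}(\M)$ and a point $\hat x\in\M$ with $(u-\varphi)(\hat x)=\max_{\M}(u-\varphi)$; note that $u^{*}=u$ since $u$ is continuous. Because both $u$ and $\varphi$ are smooth at $\hat x$, standard calculus gives $Du(\hat x)=D\varphi(\hat x)$, while the maximality yields the pointwise inequality
\[
u(\hat x+j(\hat x,z))-u(\hat x)\leq \varphi(\hat x+j(\hat x,z))-\varphi(\hat x)\qquad\text{for all }z\in\R^d.
\]
Since $Du(\hat x)=D\varphi(\hat x)$, the $\1_B(z)\langle j(\hat x,z),\cdot\rangle$ correction terms appearing in the definitions of $\II u(\hat x)$ and $\II\varphi(\hat x)$ coincide, and by Lemma \ref{lemma I well defined} both integrands are $\nu$-integrable. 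Integrating the displayed inequality against $\nu$ therefore yields the key comparison
\[
\II u(\hat x)\leq \II\varphi(\hat x).
\]

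Combining these ingredients:
\[
-\II\varphi(\hat x)+H(\hat x,D\varphi(\hat x))\leq -\II u(\hat x)+H(\hat x,Du(\hat x))\leq \|\II u\|_\infty+\|H(\cdot,Du(\cdot))\|_\infty=a,
\]
which is precisely the subsolution inequality for \eqref{eq eikonal HJ a} at $\hat x$. There is no genuine obstacle here; the content of the lemma is the elementary observation that $-\II$, being the negative of a positivity-preserving nonlocal operator (modulo a gradient correction that cancels exactly at a touching point because the gradients of $u$ and $\varphi$ agree), respects the viscosity subsolution test in essentially the same way that a classical second-order elliptic operator does.
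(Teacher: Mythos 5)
Your proof is correct, and it takes a genuinely different route from the paper's. The paper's proof is very short: it observes that, by the very choice of $a$, one has the pointwise inequality $-\II u(x)+H(x,Du(x))\leq a$ for all $x\in\M$, and then invokes Proposition \ref{prop pointwise subsol} (the equivalence, for bounded semiconvex functions, between almost-everywhere pointwise subsolutions and viscosity subsolutions, here applied to the periodic extension of $u$, which is $C^2_{\mathrm b}$ on $\R^d$ hence semiconvex). Your argument instead verifies the viscosity subsolution test directly from Definition \ref{def1}: at a global touching point $\hat x$ you use the first-order condition $Du(\hat x)=D\varphi(\hat x)$ to make the two gradient-correction terms in $\II u(\hat x)$ and $\II\varphi(\hat x)$ coincide, and the global maximality to get $u(\hat x+j(\hat x,z))-u(\hat x)\leq\varphi(\hat x+j(\hat x,z))-\varphi(\hat x)$ for every $z$, whence $\II u(\hat x)\leq\II\varphi(\hat x)$ after integrating; combined with the definition of $a$ this gives the inequality. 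Both approaches are valid. Yours is more elementary and self-contained — it does not depend on the Jensen/Aleksandrov machinery behind Proposition \ref{prop pointwise subsol} — while the paper's is shorter to state because it leans on a proposition that is needed elsewhere anyway. One small thing worth being explicit about in your write-up: the integrability needed to justify the comparison $\II u(\hat x)\leq\II\varphi(\hat x)$ comes from Lemma \ref{lemma I well defined} applied to both $u$ and $\varphi$ (as elements of $\C^2_{\mathrm b}(\M)$), which you do mention, so this is fine.
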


\bproof
First note that $\II u\in\C(\M)$, in view of Lemma \ref{lemma I well defined}, yielding that the $a$ defined above is actually a real number. By definition of $a$, we have 
\[
-\II u(x)+H(x,Du(x))\leq a\quad\FORALL\ x\in\M,
\]
which implies that $u$ is a viscosity subsolution of  \eqref{eq eikonal HJ a} by Proposition \ref{prop pointwise subsol}.
\eproof

Let us  set
\beq\label{eq critical value}
c:=\inf\left\{ a\in\R\,:\, \hbox{equation \eqref{eq eikonal HJ a} admits subsolutions} \right\}. 
\eeq
By Lemma \ref{lemma good definition}, the set appearing at the right-hand side of the above expression is nonempty. The next result shows that such a constant $c$ is finite. 
\begin{lemma}\label{lemma critical value}
We have\ \  $c\geq \min\limits_{x\in\M} H(x,0)$. 
\end{lemma}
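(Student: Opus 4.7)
My plan is to exploit the fact that $\M$ is compact together with the definition of subsolution. Since the nonlocal operator $\II$ annihilates constants and a constant is an admissible element of $\C^2_{\mathrm{b}}(\M)$, we can plug a constant test function into the subsolution inequality at a point where $u^*$ attains its maximum, and this will force $a \geq H(x_0,0) \geq \min_{\M} H(\cdot,0)$.

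In detail, first I would pick any $a \in \R$ for which \eqref{eq eikonal HJ a} admits a subsolution $u \in \Bd(\M)$; it suffices to prove $a \geq \min_{x \in \M} H(x,0)$, as then the claim follows by taking the infimum. By Definition \ref{def1}, the subsolution test is formulated in terms of $u^*$, which belongs to $\USC(\M)$. Since $\M$ is compact, $u^*$ attains its maximum at some point $\hat x \in \M$. I then set $\varphi \equiv u^*(\hat x)$, a constant function. Clearly $\varphi \in \C^2_{\mathrm{b}}(\M)$, and
\[
(u^* - \varphi)(\hat x) = 0 = \max_{\M}(u^* - \varphi),
\]
so $\varphi$ is a legitimate test function from above for $u$ at $\hat x$.

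Next, I would compute $D\varphi(\hat x) = 0$ and observe that directly from the definition of $\II$,
\[
\II\varphi(\hat x) = \int_{\R^d}\!\bigl(\varphi(\hat x + j(\hat x, z)) - \varphi(\hat x) - \1_B(z)\langle j(\hat x, z), D\varphi(\hat x)\rangle\bigr)\,\nu(dz) = 0,
\]
since $\varphi$ is constant. Applying the subsolution inequality of Definition \ref{def1}\,(i) (with $\lambda = 0$, as \eqref{eq eikonal HJ a} is the stationary equation with right-hand side $a$, cf.\ shift $H \rightsquigarrow H - a$) yields
\[
-\II\varphi(\hat x) + H(\hat x, D\varphi(\hat x)) = H(\hat x, 0) \leq a,
\]
from which
\[
\min_{x\in\M} H(x,0) \leq H(\hat x, 0) \leq a.
\]
Since this holds for every $a$ in the set defining $c$ in \eqref{eq critical value}, passing to the infimum gives $\min_{x\in\M} H(x,0) \leq c$, as required.

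The argument is essentially immediate once one realizes that constants are admissible test functions and that $\II$ kills them; there is no real obstacle to overcome. The only point deserving a line of justification is the existence of a maximum for $u^*$, which is guaranteed by the compactness of $\M$ and upper semicontinuity. No convexity of $H$, no coercivity, and no additional regularity beyond {\bf (N), (J1), (J2), (H1)} are needed here.
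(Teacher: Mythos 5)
Your proposal is correct and uses the same idea as the paper: evaluate the subsolution inequality at a maximum point with a constant test function, use that $\II$ annihilates constants, and pass to the infimum over admissible $a$. The only cosmetic difference is that you work with the upper semicontinuous envelope $u^*$ directly, whereas the paper states it for a continuous subsolution; the argument is the same.
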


\bproof
Fix $a>c$ and take a subsolution $u\in\C(\M)$ of \eqref{eq eikonal HJ a}. Let $x_0\in\M$ be a maximum point of $u$ on $\M$. Then any constant function $\varphi$ is a supertangent to $u$ at $x_0$. Being $u$ a subsolution to \eqref{eq eikonal HJ a}, we get 
\[
a\geq -\II \varphi(x_0)+H(x_0,D\varphi(x_0))=H(x_0,0)\geq \min_{x\in\M} H(x,0). 
\]
By the arbitrariness of the choice of $a>c$, we get the assertion.
\eproof

We end this section by showing the following fact. 

\begin{proposition}\label{prop LPV}
Let $u_1, u_2\in\C(\M)$ be a sub and a supersolution of equation \eqref{eq eikonal HJ a} with 
$a:=a_1$ and $a:=a_2$, respectively. Then $a_2\leq a_1$. In particular, if \eqref{eq eikonal HJ a} admits solutions, then $a=c$. 
\end{proposition}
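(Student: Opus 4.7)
The plan is to argue by contradiction using the discounted comparison principle (Proposition~\ref{prop comparison}) and the existence result for the discounted equation (Theorem~\ref{lem.ma14}). Suppose, toward a contradiction, that $a_1<a_2$.

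First I would reduce the given sub/supersolutions $u_1,u_2$ to sub/supersolutions of a discounted equation with zero right-hand side by adding constants. Fix $\lambda\in(0,1)$ and observe that if $\varphi\in\C^2_{\mrm{b}}(\M)$ touches $u_1$ from above at $\hat x$, then, for any constant $c_1\in\R$, $\varphi-c_1$ touches $\tilde u_1:=u_1-c_1$ from above at $\hat x$, and $\mathcal{I}(\varphi-c_1)=\mathcal{I}\varphi$, $D(\varphi-c_1)=D\varphi$. Hence
\[
\lambda\tilde u_1(\hat x)-\mathcal{I}(\varphi-c_1)(\hat x)+H(\hat x,D(\varphi-c_1)(\hat x))
\leq \lambda u_1(\hat x)-\lambda c_1+a_1,
\]
which is $\leq 0$ as soon as $c_1\geq \|u_1\|_\infty+a_1/\lambda$. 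Choosing
\[
c_1:=\|u_1\|_\infty+\tfrac{a_1}{\lambda}+1,\qquad
c_2:=\|u_2\|_\infty-\tfrac{a_2}{\lambda}+1,
\]
the same computation, applied symmetrically, shows that $\tilde u_1:=u_1-c_1$ is a continuous (hence $\USC$) bounded subsolution of
\[
\lambda v-\mathcal{I} v+H(x,Dv)=0\quad\IN\M\tag{$\mathrm{HJ}^\lambda_\M$},
\]
while $\tilde u_2:=u_2+c_2$ is a continuous (hence $\LSC$) bounded supersolution of the same equation.

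Next I would invoke the comparison principle for the discounted equation (Proposition~\ref{prop comparison}) to infer $\tilde u_1\leq \tilde u_2$ on $\M$, that is,
\[
u_1(x)-u_2(x)\leq c_1+c_2=\|u_1\|_\infty+\|u_2\|_\infty+2+\tfrac{a_1-a_2}{\lambda}
\qquad\FORALL x\in\M.
\]
Since $u_1-u_2$ is continuous on the compact torus $\M$, its infimum is a finite real number. Letting $\lambda\to 0^+$ in the inequality above, the assumption $a_1<a_2$ forces the right-hand side to diverge to $-\infty$, while the left-hand side remains bounded below. This contradiction proves $a_2\leq a_1$.

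For the final assertion, assume $u\in\C(\M)$ solves \eqref{eq eikonal HJ a} for some $a\in\R$. Then $u$ is in particular a subsolution, so $c\leq a$ by the definition \eqref{eq critical value}. Conversely, for any $a'<a$ and any hypothetical subsolution $u'$ of \eqref{eq eikonal HJ a} with parameter $a'$, the part already proved (applied to $u_1:=u'$, $u_2:=u$, $a_1:=a'$, $a_2:=a$) would give $a\leq a'$, contradicting $a'<a$. Hence no subsolution exists for $a'<a$, yielding $c\geq a$ and thus $c=a$. The main (and essentially only) obstacle is the careful bookkeeping of the additive constants needed to turn $u_1,u_2$ into sub/supersolutions with zero right-hand side so that Proposition~\ref{prop comparison} is directly applicable; once this reduction is made, the argument is standard.
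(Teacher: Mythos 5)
Your proof is correct and follows essentially the same strategy as the paper's: reduce to the discounted equation, invoke Proposition~\ref{prop comparison}, and let $\lambda\to 0^+$ to extract a contradiction from $a_1<a_2$. The only difference is bookkeeping — the paper adds one large constant to $u_1$ to force $u_1>u_2$ and then picks a single small $\lambda$ with $\lambda\|u_i\|_\infty<\eps$ to contradict comparison directly, while you normalize both functions to subsolution/supersolution of the zero--right-hand-side discounted equation and observe that the resulting bound $u_1-u_2\leq c_1+c_2$ diverges to $-\infty$ as $\lambda\to 0^+$; both variants are standard and equally valid.
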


\bproof
The argument is standard, see \cite{LPV}. Let us assume, by contradiction, that $a_1<a_2$ and choose $\eps>0$ small enough so that $a_1+\eps<a_2-\eps$. 
Up to adding to $u_1$ a suitably large constant, we can assume that $u_1>u_2$ in 
$\M$. Choose $\gl>0$ small enough so that $\gl \|u_i\|_\infty<\eps$ for each $i\in\{1,2\}$. Then 
\[
\gl u_1-\II u+H(x, Du_1)
<
a_1+\eps
<a_2-\eps
<
\gl u_2-\II u_2+H(x, Du_2)
\quad
\IN\ \M
\]
in the viscosity sense. By Proposition \ref{prop comparison}, we derive that $u_1\leq u_2$ in $\M$, yielding a contradiction. 
\eproof

The constant $c$ given by \eqref{eq critical value} will be referred to as the {\em critical constant} associated with $H$, consistent with the case where $\II\equiv 0$. It is characterized as the unique real constant $a$ for which 
the equation \eqref{eq eikonal HJ a} may admit solutions, in view of Proposition \ref{prop LPV}.

\section{Duality techniques for nonlocal convex HJ equations}\label{sec.duality}

Let $Q$ be a nonempty compact subset of $\R^d$ and set $K:=\M\times Q$.
For any given $\phi\in \C(K)$ we set 
\beq\label{def H phi}
H_\phi(x,p):=\max_{\xi\in Q} [\langle p,\xi\rangle-\phi(x,\xi)] \ \ \FORALL (x,p)\in\TM.\tag{H$_\phi$}
\eeq
As a maximum of linear functions in $p$, the Hamiltonian $H_\phi$ clearly satisfies {\bf (H2)}. It also satisfies {\bf (H1)}. Indeed, if 
$\go_\phi$ is a continuity modulus of $\phi$ in $K$, we have 
\[
|H_\phi(x,p)-H_\phi(y,p)|\leq \go_\phi(|x-y|)
\quad
\FORALL x,y\in\M \ \AND \ p\in\R^d. 
\]
For every fixed $\lambda \geq  0$, we will denote by ${\cF}(\gl)$ the set 
of  $(\phi,u)\in \C(K)\tim \C(\T^d)$ such that $u$ is a viscosity subsolution of
\beq\label{eq HJ phi}
\gl u-\II u+H_\phi(x,Du)= 0\ \ \IN \T^d.
\tag{HJ$^\gl_\phi$}
\eeq
The nonlocal differential operator $\II$ appearing above is of the form \eqref{def I} with $M:=\M$ and where conditions {\bf (N),\,(J1),\,(J2)} are in force.  
For $z\in\T^d$, we set
\[
{\cG}(z,\gl):=\{\phi-\gl u(z)\mid (\phi,u)\in {\cF}(\gl)\}\subseteq \C(K). 
\]
Note that, when $\lambda=0$, the set ${\cG}(z,\gl)$ is actually independent of $z$ and it reduces to the projection of ${\cF}(\gl)$ in the first component, i.e.
\[
{\cG(0)}:=
{\cG}(z,0)=\{\phi\mid (\phi,u)\in {\cF}(0)\ \ \hbox{for some $u\in \C(\M)$}\}.
\]

We start with the following easy remarks, which imply, in particular, that  
${\cG}(z,\gl)$ is nonempty, for every $\lambda\geq 0$ and $z\in\M$. 
Hereafter, we will write $\0_\M$ for the function $u(x)\equiv 0$ on $\M$, $\1_\M$ for the function $u(x)\equiv 1$ on $\M$, 
and $\0_K$ the function $\phi\equiv 0$ on $K$.

\begin{lemma}\label{lemma 0 in cG} Let $\lambda \geq  0$ and $z\in\M$. 
Any $\phi \in \C(K)$ satisfying $\phi\geq 0$ on $K$ belongs to ${\cG}(z,\gl)$. 
In particular, the functions $\0_K$ and $\1_K$ belong to $\cG(z,\gl)$ and to the interior of 
$\cG(z,\gl)$, respectively. 
\end{lemma}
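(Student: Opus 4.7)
The plan is to observe that the zero function $\0_\M$ is automatically a classical---and hence viscosity---subsolution of \eqref{eq HJ phi} whenever $\phi\geq 0$ on $K$. Indeed, $\0_\M\in\C^2_{\mathrm{b}}(\M)$ with $D\0_\M\equiv 0$, and directly from the definition \eqref{def I} the integrand vanishes identically when tested against the zero function, so $\II\0_\M\equiv 0$ on $\M$. The verification then reduces to
\[
\gl\,\0_\M(x)-\II\0_\M(x)+H_\phi(x,D\0_\M(x))
= H_\phi(x,0)
= \max_{\xi\in Q}\bigl(-\phi(x,\xi)\bigr)
= -\min_{\xi\in Q}\phi(x,\xi),
\]
which is nonpositive precisely when $\phi\geq 0$. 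Hence $(\phi,\0_\M)\in\cF(\gl)$, and since $\0_\M(z)=0$ one concludes $\phi-\gl\,\0_\M(z)=\phi\in\cG(z,\gl)$.

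Taking $\phi\equiv 0$ in the above immediately yields $\0_K\in\cG(z,\gl)$. For the interior claim, I would note that any $\psi\in\C(K)$ with $\|\psi-\1_K\|_\infty<1$ satisfies $\psi(x,\xi)>0$ on $K$, so by what has just been proved the entire open ball of radius $1$ centered at $\1_K$ in $\C(K)$ is contained in $\cG(z,\gl)$; this places $\1_K$ in the interior of $\cG(z,\gl)$.

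I do not foresee any substantial obstacle: the only points requiring care are the identity $\II\0_\M\equiv 0$, which follows at once from \eqref{def I}, and the standard observation that a bounded $C^2$ function satisfying the inequality pointwise passes the viscosity subsolution test everywhere (using itself as a test function at any candidate maximum of $u^*-\varphi$).
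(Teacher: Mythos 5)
Your proof is correct and follows essentially the same route as the paper's: test with the constant function $\0_\M$, use $\II\0_\M\equiv 0$ and $H_\phi(x,0)=-\min_{\xi\in Q}\phi(x,\xi)\leq 0$ to get $(\phi,\0_\M)\in\cF(\gl)$, and then observe that the unit ball around $\1_K$ consists of strictly positive functions. The only minor imprecision is the remark that a pointwise $C^2$ subsolution passes the viscosity test "using itself as a test function"; more precisely one tests with $\varphi$ and uses $\II u(\hx)\leq\II\varphi(\hx)$ at a maximum of $u-\varphi$, but this is the content of Proposition \ref{prop pointwise subsol} and does not affect the argument.
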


\bproof  
Fix $\phi \in \C(K)$ satisfying $\phi\geq 0$ on $K$, and 
note that, for $u={\0_\M}$, 
\[
\gl u-\II u+H_\phi(x,Du)=\max_{\xi\in Q}(-\phi(x,\xi))\leq 0.
\]
Hence $(\phi,{\0_\M})\in {\cF}(\gl)$ and $\phi-\gl {\0_\M}(z)=\phi\in{\cG}(z,\gl)$. In particular, this implies that $\0_K\in \cG(z,\gl)$.  

Let us show that  $\1$ is in the interior of $\cG(z,\gl)$. To this aim, consider the open neighborhood $V$ of $\1$ in $\C(K)$ defined as follows: 
\[
V:=\left\{\phi\in\C(K)\,\mid\, \|\phi-\1\|_{\infty}<1\,\right\}.
\]
Any $\phi\in V$ satisfies $\phi>0$ on $K$, hence from the above we infer 
$
(\phi,{\0_\M})\in {\cF}(\gl)$ 
\ for all $\phi\in V$, 
thus showing that $V$ is contained in ${\cG}(z, \gl)$. The proof is complete. 
\eproof 

We proceed by establishing the following important fact. 

\begin{proposition} \label{prop cG convex} Let $\gl\geq 0$ and $z\in\T^d$. Then ${\cF}(\gl)$ and ${\cG}(z,\gl)$ are convex subsets 
of $\C(K)\tim \C(\T^d)$ and $\C(K)$, respectively. Furthermore, $\cF(\gl)$ is closed.
\end{proposition}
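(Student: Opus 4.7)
The plan splits into three independent pieces: convexity of $\cF(\gl)$, convexity of $\cG(z,\gl)$, and closedness of $\cF(\gl)$. Throughout, the decisive structural observation is the sub-additivity of the Hamiltonian construction: for any $\phi_1,\phi_2\in \C(K)$ and $t\in[0,1]$,
\[
H_{t\phi_1+(1-t)\phi_2}(x,p)
=\max_{\xi\in Q}\bigl[t(\lan p,\xi\ran-\phi_1(x,\xi))+(1-t)(\lan p,\xi\ran-\phi_2(x,\xi))\bigr]
\leq tH_{\phi_1}(x,p)+(1-t)H_{\phi_2}(x,p),
\]
which combines with the linearity of $u\mapsto \gl u-\II u$ to suggest a pointwise argument.

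For the convexity of $\cF(\gl)$, take $(\phi_i,u_i)\in \cF(\gl)$, $t\in[0,1]$, and set $v:=tu_1+(1-t)u_2$, $\phi:=t\phi_1+(1-t)\phi_2$. The obstruction is that given a $\C^2_b$ test function touching $v$ from above at some $\hx$, one cannot split it into test functions for $u_1$ and $u_2$ separately, so the subsolution condition does not combine directly under convex combinations. The natural fix is to regularize: approximate each $u_i$ by a bounded semiconvex function $u_i^\gep$ via sup-convolution (Appendix \ref{app.sup convolution}), which produces approximate viscosity subsolutions of \eqref{eq HJ phi} with $\phi=\phi_i$, in the sense that
\[
\gl u_i^\gep-\II u_i^\gep+H_{\phi_i}(x,Du_i^\gep)\leq \go(\gep)
\]
in the viscosity sense for some modulus $\go$. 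The function $v^\gep:=tu_1^\gep+(1-t)u_2^\gep$ is then bounded and semiconvex, hence twice differentiable almost everywhere by Aleksandrov's Theorem (Theorem \ref{thm Alexandroff}); on the intersection of the sets of twice-differentiability of $u_1^\gep$ and $u_2^\gep$, both $Dv^\gep = tDu_1^\gep+(1-t)Du_2^\gep$ and $\II v^\gep=t\II u_1^\gep+(1-t)\II u_2^\gep$ hold by linearity of $\II$, so that the inequality displayed above together with convexity of $p\mapsto H_\phi(x,p)$ gives
\[
\gl v^\gep(x)-\II v^\gep(x)+H_\phi(x,Dv^\gep(x))\leq \go(\gep)\qquad\hb{a.e. }x\in\M.
\]
By Proposition \ref{prop pointwise subsol} (applied to $v^\gep-\go(\gep)/\gl$ when $\gl>0$, or with an additive adjustment via a bump when $\gl=0$), $v^\gep$ is an approximate viscosity subsolution of \eqref{eq HJ phi} with Hamiltonian $H_\phi$. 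Since $u_i^\gep\to u_i$ uniformly on $\M$ by the uniform continuity of $u_i$, we have $v^\gep\to v$ uniformly, and standard stability of viscosity subsolutions under uniform convergence yields $(\phi,v)\in \cF(\gl)$.

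For the convexity of $\cG(z,\gl)$, if $\psi_i=\phi_i-\gl u_i(z)$ with $(\phi_i,u_i)\in \cF(\gl)$, then
\[
t\psi_1+(1-t)\psi_2=\bigl(t\phi_1+(1-t)\phi_2\bigr)-\gl\bigl(tu_1(z)+(1-t)u_2(z)\bigr)=\phi-\gl v(z),
\]
and $(\phi,v)\in \cF(\gl)$ by the previous step, so $t\psi_1+(1-t)\psi_2\in\cG(z,\gl)$.

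For the closedness of $\cF(\gl)$, let $(\phi_n,u_n)\in\cF(\gl)$ converge to $(\phi,u)$ in $\C(K)\tim\C(\M)$. From the definition of $H_{\phi_n}$ as a supremum over the compact set $Q$ and the uniform convergence $\phi_n\to\phi$ on $K$, it follows that $H_{\phi_n}\to H_\phi$ uniformly on $\M\tim\R^d$. Given $\varphi\in\C^2_b(\M)$ with $(u-\varphi)(\hx)=\max(u-\varphi)$, after adding a bump that makes the maximum strict at $\hx$ one finds maxima $\hx_n$ of $u_n-\varphi$ on $\M$ with $\hx_n\to\hx$. The subsolution inequality for $u_n$ reads
\[
\gl u_n(\hx_n)-\II\varphi(\hx_n)+H_{\phi_n}(\hx_n,D\varphi(\hx_n))\leq 0,
\]
and the continuity of $\II\varphi$ (Lemma \ref{lemma I well defined}), the uniform convergence of $u_n$ and of $H_{\phi_n}$, and the continuity of $D\varphi$ allow us to pass to the limit, giving $(\phi,u)\in\cF(\gl)$. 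The main obstacle is the convexity of $\cF(\gl)$, because the nonlocal nature of $\II$ forces us to work with second-order information, which is available only a.e.\ on semiconvex functions and not on arbitrary continuous subsolutions; this is exactly what the sup-convolution regularization from Appendix \ref{app.sup convolution} is designed to supply.
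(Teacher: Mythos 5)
Your proposal is correct, and it takes a genuinely different route from the paper for the convexity of $\cF(\gl)$, which is the nontrivial part. After passing to sup-convolutions (as the paper also does), the paper stays at the level of viscosity test functions: a $\C^2_b$ function $f$ touching $v_n+w_n$ from above is split into pieces adapted to $v_n$ and $w_n$ separately by using the semiconvex touching functions $g,h$ from below at the contact point, and the resulting inequalities are summed using the relation $D(f-g-h)(\hx)=0$. You instead pass through the almost-everywhere pointwise characterization of semiconvex subsolutions (Proposition~\ref{prop pointwise subsol}), where the convex combination can be taken pointwise by linearity of $\II$ and of the gradient, and then return to the viscosity setting by the converse implication of the same proposition. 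Both routes rely on the same regularization (Proposition~\ref{prop sup convolution}); yours additionally leans on Proposition~\ref{prop pointwise subsol} and hence on the Jensen-type Lemma~\ref{max1}, whereas the paper's uses Theorem~\ref{thm sub equivalent} more directly. Your argument is arguably cleaner and more transparent; the paper's is more self-contained within the test-function framework and also directly exhibits $\cF(\gl)$ as a convex \emph{cone}. The closedness argument is essentially the same in both.

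One small slip to fix: to pass from
\[
\gl v^\gep(x)-\II v^\gep(x)+H_\phi(x,Dv^\gep(x))
=\sum_{i}t_i\bigl(\gl u_i^\gep(x)-\II u_i^\gep(x)\bigr)+H_\phi\Bigl(x,\sum_i t_i Du_i^\gep(x)\Bigr)
\]
to the desired bound, the inequality you actually invoke is the \emph{joint} convexity
\[
H_{t\phi_1+(1-t)\phi_2}\bigl(x,tp_1+(1-t)p_2\bigr)\leq tH_{\phi_1}(x,p_1)+(1-t)H_{\phi_2}(x,p_2),
\]
which is immediate because $H_\phi(x,p)=\max_{\xi\in Q}[\langle p,\xi\rangle-\phi(x,\xi)]$ is a supremum of functions affine in $(\phi,p)$ jointly. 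Chaining the sub-additivity in $\phi$ you display (at a common $p$) with separate convexity of each $p\mapsto H_{\phi_i}(x,p)$ does \emph{not} produce the mixed inequality (it yields a convex combination with coefficients $t^2, t(1-t), \dots$), so you should state and use the joint version directly. This is a presentational fix rather than a genuine gap.

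Incidentally, the paper's own display for the analogous inequality reads ``$H_\phi(x,p_1)+H_\psi(x,p_2)\leq H_{\phi+\psi}(x,p_1+p_2)$'' with the $\leq$ in the wrong direction; what is actually used (and what is true) is $H_{\phi+\psi}(x,p_1+p_2)\leq H_\phi(x,p_1)+H_\psi(x,p_2)$. Your version of the inequality is stated with the correct orientation.
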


\bproof 
Let $(\phi,v), (\psi,w)\in{\cF}(\gl)$ be arbitrarily fixed. Pick $n\in\N$ and let 
$v_n:=v^{\ep_n}$ and $w_n:=w^{\ep_n}$ be the sup-convolutions 
of $v$ and $w$, respectively, with $\eps_n>0$. According to Proposition \ref{prop sup convolution}, we can choose $\eps_n>0$ small enough  so that
$\|v-v_n\|_\infty<1/n$,  $\|w-w_n\|_\infty<1/n$
and 
\begin{flalign*}
&\gl v_n -\II v_n +H_\phi(x,Dv_n )\leq 1/n \ \ \ \IN \T^d,\\
&\gl w_n-\II w_n +H_\psi(x,Dw_n )\leq 1/n \ \ \ \IN \T^d,
\end{flalign*}
in the viscosity sense. 

Let $f\in C^2(\T^d)$ and $\hat x\in\T^d$ be a maximum point of $v_n +w_n  -f$. 
We may assume that $(v_n +w_n -f)(\hat x)=0$.
Note that $w_n $ is semiconvex, hence there is a function $g\in \C_{\mathrm{b}}^2(\R^d)$ such that 
$(w_n -g)(\hat x)=\min_{\R^d}(w_n -g)=0$ and $w_n \geq g$ in $\R^d$. Then
\[
(v_n -f+g)(\hat x)=(v_n +w_n -f)(\hat x) \geq  (v_n +w_n -f)(x)
\geq (v_n -f+g)(x)
\quad
\FORALL x\in\R^d.
\]
By the subsolution property of $v_n $, we derive, according to Theorem \ref{thm sub equivalent},
\beq\label{eq1 approximate subsol}
\gl v_n (\hat x) -\II(v_n,D(f-g)(\hx),\hat x)+H_\phi(\hat x, D(f-g)(\hat x))
\leq 1/n,
\eeq
where 
\[
{\II (u,p,x)}:=\int_{\R^d}\big(u(x+j(x,z))-u(x)-\1_B(z)\lan p,j(x,z)\ran\big)\,\nu(dz).
\]
Similarly, we may choose $h\in \C_{\mathrm{b}}^2(\R^d)$ such that 
$(v_n -h)(\hat x)=\min_{\R^d}(v_n  -h)=0$. Observe that 
\[
(w_n -f+h)(\hat x)=(v_n +w_n -f)(\hat x)\geq (v_n +w_n -f)(x)\geq (w_n -f+h)(x)
\ \ \FOR x\in\R^d,
\] 
hence, by the subsolution property of $w_n $, we derive, according to Theorem \ref{thm sub equivalent}, 
\beq\label{eq2 approximate subsol}
\gl w_n (\hat x)
-
\II(w_n,D(f-h)(\hx),\hat x)
+H_\psi(\hat x, D(f-h)(\hat x))
\leq 
1/n. 
\eeq
Note that 
\[
f(x)\geq (v_n +w_n )\geq (h+g)(x) \ \ \FORALL x\in\R^d \quad\AND \quad f(\hat x)=(v_n +w_n )(\hat x)=(h+g)(\hat x),
\]
i.e., $f-h-g$ has a minimum at $\hat x$, which implies that 
\[
D(f-h-g)(\hat x)=0.
\]
By definition of $H_\phi$,\,$H_\psi$, it is easily seen that, for all $x\in\M$ and $p_1,\,p_2\in\R^d$,  
\[
H_\phi(x,p_1)
+
H_\psi(x,p_2)
\leq
\max_{\xi\in Q} \left[ \langle p_1+p_2,\xi\rangle-\big(\phi(x,\xi)+\psi(x,\xi)\big) \right]
=
H_{\phi+\psi}(x,p_1+p_2).
\]
By summing up \eqref{eq1 approximate subsol} and \eqref{eq2 approximate subsol}, we obtain, in view of the above,
\[
\gl(v_n +w_n )(\hat x)
-
\II(v_n,D(f-g)(\hx),\hat x)
-
\II(w_n,D(f-h)(\hx),\hat x)
+
H_{\phi+\psi} (\hx,Df(\hx))
\leq 
2/n.
\]
Now, in view of the inequalities \eqref{eq1 approximate subsol}, \eqref{eq2 approximate subsol} and \eqref{eq inequality I},  we derive that 
\[
\II(v_n,D(f-g)\hx),\hat x)\in\R
\quad
\AND
\quad 
\II(w_n,D(f-h)(\hx),\hat x)\in\R,
\]
meaning that the functions 
\begin{eqnarray*}
&z\mapsto v_n(\hx+j(\hx,z))-v_n(\hx)- \1_B(z)\lan D(f-g)(\hx),j(\hx,z)\ran\\
&z\mapsto w_n(\hx+j(\hx,z))-w_n(\hx)- \1_B(z)\lan D(f-h)(\hx),j(\hx,z)\ran
\end{eqnarray*}
belong to $L^1(\R^d,\nu)$.
This implies that 
\[
\II(v_n,D(f-g)(\hx),\hat x)
+
\II(w_n,D(f-h)(\hx),\hat x)
=
\II(v_n+w_n,Df(\hx),\hat x),
\]
yielding
\[
\gl(v_n +w_n )(\hat x)
-
\II(v_n+w_n,D f(\hx),\hat x)
+
H_{\phi+\psi}(\hat x,Df(\hat x))
\leq 
2/n.
\]
This shows that $v_n +w_n $ is a subsolution of 
\[
\gl u-\II u+H_{\phi+\psi}(x,Du)=2/n \ \ \IN \T^d.
\]
By sending $n\to +\infty$, we conclude, by the stability of the notion of viscosity subsolution, that $v+w$ is a subsolution to 
\[
\gl u-\II u+H_{\phi+\psi}(x,Du)=0 \ \ \IN \T^d,
\]
i.e., $(v+w,\phi+\psi)\in{\cF}(\gl)$. 
It is clear that $(tu,t\phi)\in{\cF}(\gl)$ for any $t\geq  0$.
This implies that ${\cF}(\gl)$, and consequently ${\cG}(z,\gl)$, is a convex cone. 
The fact that ${\cF}(\gl)$ is closed is again a consequence of the stability property 
of viscosity subsolutions.
\eproof

Next, we show that $\overline{\cG(z,\gl)}$ is a proper subset of $\C(K)$ for every $z\in\M$ and $\gl\geq 0$. We begin by remarking what follows. 

\begin{lemma}\label{lemma supersolution HJ phi} Let $\gl\geq  0$ and $z\in\M$. Let $\phi\in  \C(K)$ and let $u\in \C(\T^d)$ be a supersolution of
\beq\label{eq HJ supersolution}
\gl u-\II u +H_\phi(x,Du)=\eps\quad\IN\ \M,
\eeq
where $\ep>0$ is a constant. 
Then we have $\phi-\gl u(z)\not\in \cG(z,\gl)$.
\end{lemma}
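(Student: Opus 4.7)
The plan is to argue by contradiction. Suppose $\phi-\gl u(z)\in\cG(z,\gl)$; then there exists a pair $(\psi,v)\in\cF(\gl)$ with $\psi-\gl v(z)=\phi-\gl u(z)$ in $\C(K)$, which forces $\psi(x,\xi)=\phi(x,\xi)+c$ for the constant $c:=\gl(v(z)-u(z))$. Since
$$H_\psi(x,p)=\max_{\xi\in Q}\bigl[\langle p,\xi\rangle-\phi(x,\xi)-c\bigr]=H_\phi(x,p)-c,$$
the subsolution property of $v$ reads $\gl v-\II v+H_\phi(x,Dv)\leq c$ in $\M$. The constant shift $\tilde v:=v-(v(z)-u(z))$ leaves $D\tilde v$ and $\II\tilde v$ unchanged while lowering the zero-order term by exactly $c$, so $\tilde v$ is a viscosity subsolution of
$$\gl w-\II w+H_\phi(x,Dw)\leq 0\quad\IN\ \M,$$
and moreover satisfies $\tilde v(z)=u(z)$.

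Next I would compare $\tilde v$ against the strict supersolution $u$ of $\gl u-\II u+H_\phi(x,Du)\geq\eps$ via the doubling-of-variables technique already used in the proof of Proposition \ref{prop comparison}. For each $k\in\N$, let $(x_k,y_k)$ be a maximum point on $\R^d\times\R^d$ of
$$\Phi_k(x,y):=\tilde v(x)-u(y)-k|x-y|^2,$$
with $\tilde v$ and $u$ regarded as $\Z^d$-periodic functions. The crucial initial observation is that $\Phi_k(z,z)=0$, which forces $\tilde v(x_k)-u(y_k)\geq k|x_k-y_k|^2\geq 0$; standard arguments then give $k|x_k-y_k|^2\to 0$ and, up to a subsequence, $(x_k,y_k)\to(x_0,x_0)$.

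Applying the subsolution test of Theorem \ref{thm sub equivalent} to $\tilde v$ at $x_k$ and the supersolution test of Theorem \ref{thm super equivalent} to $u$ at $y_k$, and subtracting the resulting inequalities, yields
$$\gl\bigl(\tilde v(x_k)-u(y_k)\bigr)+\bigl[H_\phi(x_k,p_k)-H_\phi(y_k,p_k)\bigr]+\bigl[\II(u,p_k,y_k)-\II(\tilde v,p_k,x_k)\bigr]\leq -\eps,$$
with $p_k:=2k(x_k-y_k)$. The first bracketed term is nonnegative; the second is controlled by a continuity modulus of $\phi$, which yields a continuity modulus for $H_\phi$ uniform in $p$; and the third is estimated exactly as in the proof of Proposition \ref{prop comparison}, by splitting the integration over $B_R$ and its complement and invoking {\bf (N)}, {\bf (J1)}, {\bf (J2)} to obtain a bound that vanishes as $k\to+\infty$ and then $R\to+\infty$. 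Passing to the limit gives $0\leq -\eps$, a contradiction.

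The conceptual point, and the reason the statement covers $\gl\geq 0$ rather than only $\gl>0$, is that the strict positivity of $\eps$ supplies the slack the argument exploits: Proposition \ref{prop comparison} alone, which anyway requires $\gl>0$, would produce only the weak inequality $\tilde v\leq u$ and fail to contradict $\tilde v(z)=u(z)$. The main technical step is the estimate of the difference of the two $\II$ terms arising from the penalization, but this is handled verbatim by the computation already performed in the proof of Proposition \ref{prop comparison}.
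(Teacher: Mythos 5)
Your proof is correct, but it takes a genuinely different route from the paper's. After the (identical) reduction showing that $\tilde v := v - (v(z)-u(z))$ is a subsolution of $\gl w - \II w + H_\phi(x,Dw) \leq 0$ with $\tilde v(z)=u(z)$, the paper dispatches the contradiction by invoking existing lemmas: for $\gl>0$ it applies the comparison principle of Proposition~\ref{prop comparison} to $\tilde v$ and the supersolution $u-\eps/\gl$, and for $\gl=0$ it applies Proposition~\ref{prop LPV}; thus the paper's proof is shorter but splits into two cases. You instead rerun the doubling-of-variables computation from scratch, anchoring it at the point $z$ where $\tilde v(z)=u(z)$ so that $\Phi_k(z,z)=0$ guarantees $\tilde v(x_k)-u(y_k)\geq 0$ without ever assuming a strictly positive maximum. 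This yields a single argument valid uniformly for $\gl\geq 0$: the $\gl$-term is merely nonnegative (even zero when $\gl=0$), the $H_\phi$-difference is controlled by the modulus $\go_\phi$, the $\II$-difference is handled verbatim as in Proposition~\ref{prop comparison}, and the strict $\eps>0$ on the right-hand side supplies the contradiction. What your approach buys is a unified, self-contained treatment of $\gl\geq 0$; what it costs is reproducing the penalization estimates rather than citing them as black boxes. One small inaccuracy in your closing remark: the paper's $\gl>0$ case does not compare $\tilde v$ directly with $u$ (which would indeed be inconclusive) but with $u-\eps/\gl$, which is why that case does succeed via Proposition~\ref{prop comparison}; the genuine reason for the paper's case split is that the $\eps/\gl$ shift is unavailable at $\gl=0$, which your anchored doubling argument circumvents.
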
 

\bproof Fix $z\in\T^d$. By contradition, we suppose that 
$\phi- \gl u(z)\in\cG(z,\gl)$. Then, there exists $(\psi,v)\in \cF(\gl)$ such that 
$\psi-\gl v(z)=\phi-\gl u(z)$. 
We get   
\[\bald
0&\geq \gl v-\II v+H_\psi(x,Dv)
=\gl v-\II v+H_{\phi-\gl(u(z)-v(z))}(x,D v)
\\&
=\gl(v+u(z)-v(z))-\II v+H_{\phi}(x, Dv)
=\gl \hat v-\II \hat v+H_{\phi}(x,D\hat v),
\eald
\]
where we have set $\hat v:= v+u(z)-v(z)$.
On the other hand, if $\gl>0$, 
we have that $u-\eps/\gl$ is a supersolution of \eqref{eq HJ phi} and, by comparison, we find that 
\[
v+u(z)-v(z)\leq u-\ep/\gl\quad\IN\T^d, 
\]
which cannot be, as it is easily seen by evaluating the above inequality at the point $x:=z$. 
If $\gl=0$, we have $-\II\hat v+H_{\phi}(x,D\hat v)\leq 0$ and $-\II u+H_\phi(x,Du)\geq \ep$ in $\T^d$, which contradicts Proposition \ref{prop LPV}. 
\eproof

\smallskip

\begin{proposition}\label{prop nontrivial cone} Let $\gl\geq 0$ and $z\in\T^d$. 
Then $\0_K\in \pl \cG(z,\gl)$. In particular, $\overline{\cG(z,\gl)}$ is strictly contained in $\C(K)$.
\end{proposition}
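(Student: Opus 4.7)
The plan is to combine Lemma~\ref{lemma 0 in cG}, which already yields $\0_K\in\cG(z,\gl)$, with the construction of a sequence $(\phi_n)\subset\C(K)$ that converges to $\0_K$ in the sup-norm but satisfies $\phi_n\notin\cG(z,\gl)$ for every $n$. This would force $\0_K$ not to lie in $\Int(\cG(z,\gl))$, hence $\0_K\in\pl\cG(z,\gl)$.

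The natural candidate is the constant function $\phi_n\equiv -1/n$ on $K$. I would apply Lemma~\ref{lemma supersolution HJ phi} with this $\phi_n$, with $u=\0_\M$, and with $\eps=1/n$. Since $Du\equiv 0$ and $\II u\equiv 0$, one computes $\gl u(x)-\II u(x)+H_{\phi_n}(x,Du(x))=\max_{\xi\in Q}(-\phi_n(x,\xi))=1/n$, so $\0_\M$ is a classical (hence viscosity) supersolution of $\gl u-\II u+H_{\phi_n}(x,Du)=1/n$ on $\M$. Lemma~\ref{lemma supersolution HJ phi} then gives $\phi_n-\gl\0_\M(z)=-1/n\notin\cG(z,\gl)$, and since $\|\phi_n\|_\infty=1/n\to 0$, the conclusion $\0_K\in\pl\cG(z,\gl)$ follows.

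For the strict inclusion $\overline{\cG(z,\gl)}\subsetneq\C(K)$, I would invoke the standard fact from convex analysis in topological vector spaces (see, e.g., Rudin, \emph{Functional Analysis}, Thm.~1.12): if $C$ is convex, $a\in\Int(C)$ and $b\in\overline C$, then $(1-t)a+tb\in\Int(C)$ for every $t\in[0,1)$. The set $\cG(z,\gl)$ is convex by Proposition~\ref{prop cG convex}, and Lemma~\ref{lemma 0 in cG} gives $\1_K\in\Int(\cG(z,\gl))$. If $\overline{\cG(z,\gl)}$ equaled $\C(K)$, then $-\1_K\in\overline{\cG(z,\gl)}$, and applying the quoted fact with $a=\1_K$, $b=-\1_K$, $t=1/2$ would force $\0_K=\tfrac12\1_K+\tfrac12(-\1_K)\in\Int(\cG(z,\gl))$, contradicting the boundary assertion just proved.

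I do not anticipate any serious obstacle: once Lemma~\ref{lemma supersolution HJ phi} is available, the argument reduces to the obvious choice of the test pair $(\phi_n,\0_\M)$ plus a routine convex-geometry observation.
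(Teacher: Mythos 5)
Your proof is correct, and its first half coincides with the paper's intended argument, just spelled out more fully. The paper states tersely that $\0_\M$ is a solution of \eqref{eq HJ phi} with $\phi=\0_K$ and invokes Lemma~\ref{lemma supersolution HJ phi}; unwinding this (using $H_{\phi+a}=H_\phi-a$) amounts exactly to your observation that $\0_\M$ is a classical supersolution of $\gl u-\II u+H_{\phi_n}(x,Du)=1/n$ for $\phi_n\equiv -1/n$, so that $\phi_n\notin\cG(z,\gl)$ while $\phi_n\to\0_K$ and $\0_K\in\cG(z,\gl)$ by Lemma~\ref{lemma 0 in cG}. For the second assertion you diverge slightly from the paper: the authors invoke the Hahn--Banach separation theorem applied to $\Int(\cG(z,\gl))$ and $\{\0_K\}$ to produce a nonzero functional nonnegative on the cone (hence exhibiting functions outside $\overline{\cG(z,\gl)}$), whereas you use the elementary convexity fact that the open segment between an interior point ($\1_K$, interior by Lemma~\ref{lemma 0 in cG}) and a closure point lies in the interior. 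Both routes are standard and rely on the same inputs (convexity of $\cG(z,\gl)$ from Proposition~\ref{prop cG convex}, nonempty interior, and $\0_K\in\pl\cG(z,\gl)$); your segment argument is a bit more elementary, while the paper's Hahn--Banach phrasing aligns with the functional that is extracted anyway in the subsequent Proposition~\ref{prop nonempty dual cone}. Either way the conclusion is sound.
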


\bproof Note that $\0_{\T^d}$ is a solution of  \eqref{eq HJ phi}, with $\phi=\0_K$. 
Hence, Lemma \ref{lemma supersolution HJ phi} shows that $\0_K\in \pl\cG(z,\gl)$. The last assertion follows as a consequence of the Hahn-Banach Theorem applied to the sets $\Dr{int}(\cG(z,\gl))$ and $\{\0_K\}$.   
\eproof 

When $\gl>0$, we furthermore have the following fact.

\begin{proposition} \label{prop G(lambda,z) closed} Let $\lambda> 0$ and $z\in\M$. 
Then ${\cG}(z,\gl)$ is a closed and proper subset of $\C(K)$.
\end{proposition}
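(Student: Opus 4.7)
The plan is to establish closedness of $\cG(z,\gl)$ directly; properness then follows at once by combining closedness with Proposition \ref{prop nontrivial cone}, which already gives $\overline{\cG(z,\gl)}\subsetneq \C(K)$.

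The first step will be a convenient normalization. Given $\Phi_n \to \Phi$ in $\C(K)$ with $\Phi_n \in \cG(z,\gl)$, I would write $\Phi_n = \phi_n - \gl u_n(z)$ for some $(\phi_n, u_n) \in \cF(\gl)$ and set $v_n := u_n - u_n(z)$. Exploiting the identity $H_{\phi - \gl a}(x,p) = H_\phi(x,p) + \gl a$ valid for constants $a$, a direct check shows that $(\Phi_n, v_n) \in \cF(\gl)$, with the added benefit that $v_n(z) = 0$. The task thus reduces to producing $v \in \C(\M)$ with $v(z) = 0$ and $(\Phi, v) \in \cF(\gl)$, for then $\Phi = \Phi - \gl v(z) \in \cG(z,\gl)$.

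The key idea is to avoid working with $v_n$ directly (for which we have no a priori lower bound or equicontinuity) and instead use the unique continuous viscosity solution $w_n$ of $\gl w - \II w + H_{\Phi_n}(x, Dw) = 0$ supplied by Theorem \ref{lem.ma14}. Since $v_n$ is a subsolution and $w_n$ a supersolution of the same equation, the comparison principle (Proposition \ref{prop comparison}) yields $v_n \le w_n$ and therefore $w_n(z) \ge 0$. Setting $M := \sup_n \|\Phi_n\|_\infty < +\infty$, the constants $\pm M/\gl$ are respectively a super- and a sub-solution, so comparison gives the uniform bound $\|w_n\|_\infty \le M/\gl$. Introducing the half-relaxed limits
\[
\bar w(x) := \limsup_{(n,y)\to(\infty,x)} w_n(y), \qquad \underline w(x) := \liminf_{(n,y)\to(\infty,x)} w_n(y),
\]
standard viscosity stability---carried out through the equivalent tests of Theorems \ref{thm sub equivalent} and \ref{thm super equivalent}, with the Dominated Convergence Theorem handling the nonlocal term via the integrable envelopes provided by (N), (J1), (J2)---shows that $\bar w$ is a bounded USC subsolution and $\underline w$ a bounded LSC supersolution of $\gl u - \II u + H_\Phi(x, Du) = 0$. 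Proposition \ref{prop comparison} then forces $\bar w \le \underline w$, so both coincide with the unique continuous solution $w$ of the limit equation, and $w_n \to w$ uniformly on $\M$; in particular $w(z) \ge 0$.

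To close the argument, set $v := w - w(z) \in \C(\M)$. Since $\II$ annihilates constants and $Dv = Dw$,
\[
\gl v - \II v + H_\Phi(x, Dv) = -\gl w(z) \le 0 \quad \text{in } \M,
\]
so $(\Phi, v) \in \cF(\gl)$ with $v(z) = 0$, which shows $\Phi \in \cG(z,\gl)$. The main technical obstacle I anticipate is the stability step: one must carefully verify that both the subsolution and supersolution tests pass to the limit for the nonlocal operator under the uniform convergence $H_{\Phi_n}\to H_\Phi$. This is where the splitting of $\II$ provided by Theorems \ref{thm sub equivalent}--\ref{thm super equivalent}, combined with the integrability built into (N), becomes essential, ensuring a dominating function for the tail part $\II^2_\delta$ and allowing continuity of $\II^1_\delta\varphi$ in the base point.
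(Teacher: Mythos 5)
Your proof is correct, but it takes a genuinely different route from the paper's. You establish sequential closedness directly: given $\Phi_n\to\Phi$ with $\Phi_n\in\cG(z,\gl)$, you normalize to subsolutions vanishing at $z$, pass to the unique discounted solutions $w_n$ of the equations with Hamiltonian $H_{\Phi_n}$, extract a uniform bound, and invoke half-relaxed limit stability for the nonlocal operator to produce the limit solution $w$ with $w(z)\geq 0$, from which $\Phi\in\cG(z,\gl)$ follows. Each of these steps holds, and the tricky one (stability of viscosity sub/supersolutions under half-relaxed limits with a simultaneous uniform perturbation of the Hamiltonian) goes through here because the nonlocal term $\II\varphi$ acts on the smooth test function and is continuous by Lemma \ref{lemma I well defined}, exactly as you anticipate. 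The paper instead shows that the complement of $\cG(z,\gl)$ is open: it fixes an arbitrary $\phi\notin\cG(z,\gl)$ and proves that, for $r<\rho:=-\gl u_\gl(z)$, the ball $\{\psi:\|\psi-\phi\|_\infty<r\}$ is disjoint from $\cG(z,\gl)$. The key observation is that the set $S=\{\gl u(z):(\phi,u)\in\cF(\gl)\}$ equals the ray $(-\infty,\gl u_\gl(z)]$ by the comparison principle and the invariance of $\cF(\gl)$ under subtracting constants; combining this with the elementary bound $H_\phi\leq H_\psi+\|\phi-\psi\|_\infty$ yields the contradiction without ever invoking stability or half-relaxed limits. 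The paper's argument is therefore shorter and strictly more elementary, relying only on the comparison principle and a one-line algebraic estimate, while yours is a legitimate alternative that trades that algebraic observation for the heavier but more generic machinery of viscosity stability. Both hinge on the same two pillars, namely existence/uniqueness of the discounted solution (Theorem \ref{lem.ma14}) and the comparison principle (Proposition \ref{prop comparison}).
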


\bproof
Pick $\phi\in\C(K)$ such that $\phi\not\in \cG(z,\gl)$, which exists in view of Proposition \ref{prop nontrivial cone}. This means that any $u\in\C(\M)$ such that $(\phi,u)\in\cF(\gl)$ satisfies $u(z)\not=0$, otherwise stated, that the set 
\beq\label{eq convexity rmk}
S:=\{\gl u(z)\,\mid\,u\in\C(\M)\ \hbox{such that}\ (\phi,u)\in\cF(\gl)\,\}
\eeq
does not contain $0$.  
%
Let us denote by $u_\gl$ the unique continuous solution to \eqref{eq HJ phi}. It is easily seen that $(\phi,u_\gl-k)\in\cF(\gl)$ for every $k\geq 0$. Furthermore, any $u\in\C(\M)$ such that $(\phi,u)\in\cF(\gl)$ satisfies $u\leq u_\gl$, 
in view of the Comparison Principle stated in Proposition \ref{prop comparison}. 
We conclude that  
\beq\label{eq S}
S= (-\infty,\gl u_\gl(z)]=(-\infty,-\rho]\quad\FOR \hbox{some}\ \rho>0.
\eeq
Let us fix $r<\rho$ and consider the open neighborhood $V$ of $\phi$ in $\C(K)$ defined as follows: 
\[
V:=\left\{\psi\in\C(K)\,\mid\, \|\psi-\phi\|_{\infty}<r\,\right\}.
\]
We claim that $V\cap \cG(z,\gl)=\emptyset$. Indeed, let us assume by contradiction that there exists $\psi \in V\cap  \cG(z,\gl)$. Then there exists $u\in\C(\M)$ such that $(\psi,u)\in\cF(\gl)$ and $u(z)=0$. Now we have 
\[
\gl u -\II u+H_\phi(x,Du)
\leq
\gl u- \II u+H_\psi(x,Du)+\|\phi-\psi\|_{\infty}
<
r
\quad
\IN\ \M,
\]
i.e., $(\phi, u-r/\gl)\in\F(\gl)$. By definition of $S$, we get that $-r=\gl (u(z)-r/\gl)\in S$, yielding a contradiction with \eqref{eq S} by the choice of $r$. 
\eproof

Next, we prove two auxiliary lemmas that we shall need in the proof of the last two theorems of this section.

\begin{lemma} \label{lem.ma24} Let $\lambda>  0$, $z\in\M$ and $\phi\in\C(K)$. 
 Let $u_\lambda\in\C(\T^d)$ be the unique solution to \eqref{eq HJ phi}.
Then 
\[
\phi-\gl u_\gl(z)-\ep\not \in {\cG}(z,\gl)\quad\FORALL \eps>0.
\]
In particular, $\phi-\gl u_\gl(z)\in \pl {\cG}(z,\gl)$.
\end{lemma}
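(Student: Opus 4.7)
The plan is to argue by contradiction, exploiting the affine translation invariance $H_{\phi+\gl a}(x,p)=H_\phi(x,p)-\gl a$ (immediate from the definition \eqref{def H phi}) and the comparison principle in Proposition \ref{prop comparison}. Suppose that for some $\ep>0$ one has $\phi-\gl u_\gl(z)-\ep\in\cG(z,\gl)$. By definition of $\cG(z,\gl)$, there exists a pair $(\psi,v)\in\cF(\gl)$ with
\[
\psi-\gl v(z)=\phi-\gl u_\gl(z)-\ep.
\]
Introducing the constant $a:=v(z)-u_\gl(z)-\ep/\gl$, this rewrites as $\psi=\phi+\gl a$.

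Using the identity $H_{\phi+\gl a}=H_\phi-\gl a$ in the subsolution inequality satisfied by $v$, and noting that constants do not affect $\II$ or $D$, the shifted function $\tilde v:=v-a$ satisfies
\[
\gl\tilde v-\II\tilde v+H_\phi(x,D\tilde v)\leq 0\quad\IN\ \M
\]
in the viscosity sense, i.e.\ $\tilde v$ is a subsolution of \eqref{eq HJ phi}. Since $u_\gl$ is in particular a supersolution of the same equation, Proposition \ref{prop comparison} gives $\tilde v\leq u_\gl$ on $\M$. Evaluating at $z$, however, yields $\tilde v(z)=v(z)-a=u_\gl(z)+\ep/\gl>u_\gl(z)$, a contradiction. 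This proves the first assertion.

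For the \textquotedblleft in particular\textquotedblright\ part, note that $(\phi,u_\gl)\in\cF(\gl)$ by definition of $u_\gl$, so $\phi-\gl u_\gl(z)\in\cG(z,\gl)$. On the other hand, the sequence $\phi-\gl u_\gl(z)-1/n$ converges to $\phi-\gl u_\gl(z)$ in $\C(K)$ as $n\to+\infty$, while each of its terms lies outside $\cG(z,\gl)$ by what we just proved. Hence $\phi-\gl u_\gl(z)$ cannot be an interior point of $\cG(z,\gl)$, and therefore $\phi-\gl u_\gl(z)\in\pl\cG(z,\gl)$. The only place where genuine care is needed is checking the shift step: one must keep track of the fact that the value of $H_\phi$ changes by exactly $-\gl a$ when $\phi$ is incremented by the constant $\gl a$, which is precisely what allows the translation of $v$ to give a subsolution of the original equation \eqref{eq HJ phi} rather than a perturbed one.
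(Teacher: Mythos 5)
Your proof is correct and takes essentially the same approach as the paper: the paper first observes that $u_\gl$ is a supersolution of $\gl u-\II u+H_{\phi-\eps}(x,Du)=\eps$ and then invokes Lemma \ref{lemma supersolution HJ phi}, whose proof is precisely the shift-by-a-constant (via $H_{\phi+c}=H_\phi-c$) plus comparison argument that you carry out directly. The only difference is presentational: you inline Lemma \ref{lemma supersolution HJ phi} rather than cite it.
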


\bproof From the fact that $u_\gl$ is a solution of \eqref{eq HJ phi} and $H_{\phi-\eps}=H_\phi+\eps$, it is easily seen that $u_\gl$ is a supersolution of \eqref{eq HJ supersolution}, with $\phi-\e$ in place of $\phi$. This proves the first assertion in view of Lemma \ref{lemma supersolution HJ phi}. The second one follows from the fact that 
$\phi-\gl u_\gl(z)\in{\cG}(z,\gl)$ since $(\phi,u_\gl)\in\cF(\gl)$.   
\eproof

\begin{lemma}\label{lemma phi+c boundary point}
Let $\phi\in \C(K)$ and denote by  $c_\phi$ the critical constant associated with $H_\phi$ through \eqref{eq critical value}. Then 
\[
\phi+c_\phi+\eps\in \cG(0)
\quad
\AND
\quad
\phi+c_\phi-\eps\not\in \cG(0)
\qquad
\FORALL\ \eps>0.
\]
In particular, $\phi+c_\phi\in \partial\cG(0)$
\end{lemma}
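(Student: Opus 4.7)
The plan is to reduce both membership statements to the defining infimum of $c_\phi$ in \eqref{eq critical value}, using sup-convolution to bridge the gap between bounded upper semicontinuous subsolutions (allowed by that definition) and the \emph{continuous} subsolutions that membership in $\cG(0)$ requires.

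The starting point is the identity $H_{\phi+a}(x,p)=H_\phi(x,p)-a$, valid for any $a\in\R$: it implies that $(\phi+a,u)\in\cF(0)$ precisely when $u\in\C(\M)$ is a viscosity subsolution of the critical-type equation $-\II u+H_\phi(x,Du)=a$ in $\M$. Consequently, $\phi+a\in\cG(0)$ is equivalent to the existence of a continuous subsolution of this equation.

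For the non-membership assertion, I would argue by contradiction: if $\phi+c_\phi-\eps\in\cG(0)$ for some $\eps>0$, the equivalence above produces a continuous (hence bounded and upper semicontinuous) subsolution of $-\II u+H_\phi(x,Du)=c_\phi-\eps$ in $\M$, so that $c_\phi-\eps$ belongs to the set defining $c_\phi$ via \eqref{eq critical value} (with $H_\phi$ in place of $H$). This contradicts $c_\phi$ being the infimum.

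For the membership assertion, fix $\eps>0$ and $\eta\in(0,\eps/2)$. By the infimum definition, there is a bounded usc subsolution $v$ of $-\II v+H_\phi(x,Dv)=c_\phi+\eta$ in $\M$. To convert $v$ into a continuous subsolution I would invoke the sup-convolution estimate from Appendix \ref{app.sup convolution}, already used in the proof of Proposition \ref{prop cG convex}: for $\delta>0$ sufficiently small, the sup-convolution $v^\delta$ is Lipschitz on $\M$ and satisfies
\[
-\II v^\delta+H_\phi(x,Dv^\delta)\leq c_\phi+\eta+o(1)\IN \M
\]
in the viscosity sense as $\delta\to 0^+$. Choosing $\delta$ small enough that the error is less than $\eps-\eta$ delivers a continuous subsolution of the equation with right-hand side $c_\phi+\eps$, whence $(\phi+c_\phi+\eps,v^\delta)\in\cF(0)$ and so $\phi+c_\phi+\eps\in\cG(0)$. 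The final assertion $\phi+c_\phi\in\partial\cG(0)$ then follows at once, since any open neighborhood of $\phi+c_\phi$ in $\C(K)$ contains, for $\eps>0$ small, both $\phi+c_\phi+\eps\in\cG(0)$ and $\phi+c_\phi-\eps\notin\cG(0)$.

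The only genuine technical input is the sup-convolution result from the appendix, needed to promote a possibly merely upper semicontinuous subsolution to a Lipschitz one at the cost of an arbitrarily small error in the right-hand side; everything else is an unfolding of definitions together with the relation $H_{\phi+a}=H_\phi-a$.
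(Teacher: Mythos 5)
Your overall route is the same as the paper's: identify membership of $\phi+a$ in $\cG(0)$ with the existence of a continuous subsolution of $-\II u+H_\phi(x,Du)=a$ via the identity $H_{\phi+a}=H_\phi-a$, then read off both assertions from the definition of $c_\phi$ as the infimum in \eqref{eq critical value}. The non-membership half is correct and is exactly what the paper does.

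In the membership half you introduce a sup-convolution step to upgrade a ``bounded usc subsolution'' to a continuous one, and this is where the proposal goes slightly astray. First, the detour is unnecessary: in \eqref{eq critical value} and its downstream uses (Lemma \ref{lemma critical value} and Proposition \ref{prop LPV} both take subsolutions in $\C(\M)$) the paper treats ``subsolution'' as ``continuous subsolution.'' Under that reading, pick $a'\in(c_\phi,c_\phi+\eps)$ for which a continuous subsolution $u$ of $-\II u+H_\phi(x,Du)=a'$ exists; since the viscosity inequality $\leq a'$ trivially implies $\leq c_\phi+\eps$, the same $u$ is a continuous subsolution of $-\II u+H_\phi(x,Du)=c_\phi+\eps$, giving $(\phi+c_\phi+\eps,u)\in\cF(0)$ directly. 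This is the paper's one-line proof (modulo a typo there: ``admits viscosity solutions'' should read ``admits viscosity subsolutions''). Second, even if you insisted on allowing merely bounded usc subsolutions in \eqref{eq critical value}, the appeal to Proposition \ref{prop sup convolution} would not be licensed as written: that proposition is hypothesized for $v\in\C(\T^d)$, and its proof genuinely uses a continuity modulus $\omega_v$ in \eqref{eq optimal y}, which a general bounded usc function lacks. So, as stated, your membership argument contains a small gap, but the correct fix is to drop the regularization, not to strengthen it.
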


\bproof
By definition of $c_\phi$, the equation
\[
-\II u+H_\phi(x,Du)=a\quad\IN\ \M
\]
admits viscosity solutions for every $a>c_\phi$, and does not admit viscosity solutions for any $a<c_\phi$. This readily proves the assertion for $H_{\phi+a}=H_\phi-a$. 
\eproof

For every fixed $\gl\geq0$ and $z\in\M$, we denote by ${\cGprime}(z,\lambda)$ the set of probability measures $\parts(K)$  on $K$ that belong to the dual cone of ${\cG}(z,\lambda)$, that is 
\[
{\cGprime}(z,\lambda):=\left\{ \mu\in\parts(K)\,\mid\, \langle \mu,  f\rangle \geq 0\quad \FORALL f\in {\cG}(z,\gl) \right\}.
\]
The following holds. 

\begin{proposition}\label{prop nonempty dual cone}
Let $\gl\geq 0$ and $z\in\M$. 
Then ${\cGprime}(z,\lambda)$ is nonempty. 
\end{proposition}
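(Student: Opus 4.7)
The plan is to produce $\mu \in \cGprime(z,\lambda)$ by applying the Hahn--Banach separation theorem in $\C(K)$ and then identifying the separating functional with a positive measure via the Riesz representation theorem. The key structural facts already established are that $\cG(z,\lambda)$ is a convex cone (Proposition \ref{prop cG convex}), that its closure is a proper subset of $\C(K)$ (Proposition \ref{prop nontrivial cone}), and that $\cG(z,\lambda)$ contains the positive cone $\{\phi\in\C(K)\mid\phi\geq 0\}$ and has nonempty interior, with $\1_K$ lying in the interior (Lemma \ref{lemma 0 in cG}).

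First I would pick some $g \in \C(K) \setminus \overline{\cG(z,\lambda)}$, which exists by Proposition \ref{prop nontrivial cone}. Since $\overline{\cG(z,\lambda)}$ is a closed convex subset of the locally convex Hausdorff space $\C(K)$, a standard Hahn--Banach separation argument yields a nonzero continuous linear functional $L\colon \C(K)\to\R$ and a constant $\alpha\in\R$ such that
\[
L(g)<\alpha\leq L(f)\qquad\text{for all }f\in\overline{\cG(z,\lambda)}.
\]
Exploiting the fact that $\cG(z,\lambda)$ is a cone, I apply this inequality to $tf$ for $f\in\cG(z,\lambda)$ and $t>0$, then let $t\to 0^+$ and $t\to +\infty$, to conclude that $\alpha\leq 0$ and $L(f)\geq 0$ for every $f\in\cG(z,\lambda)$.

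Next, using that $\cG(z,\lambda)$ contains every nonnegative $\phi\in\C(K)$ (Lemma \ref{lemma 0 in cG}), I get $L(\phi)\geq 0$ for all $\phi\geq 0$, so $L$ is a positive continuous linear functional. By the Riesz--Markov representation theorem, there exists a finite positive Borel measure $\mu$ on the compact space $K$ such that $L(f)=\langle \mu,f\rangle$ for all $f\in\C(K)$. To check that $\mu$ has positive total mass, I use that $\1_K$ is in the interior of $\cG(z,\lambda)$: there is an open neighborhood $V$ of $\1_K$ contained in $\cG(z,\lambda)$, so for every $h\in\C(K)$ one has $\1_K\pm\varepsilon h\in V$ for $\varepsilon>0$ small, yielding $L(\1_K)\geq \varepsilon|L(h)|$, and since $L\neq 0$ we conclude $\mu(K)=L(\1_K)>0$.

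Finally, I normalize by setting $\tilde\mu := \mu/\mu(K)$; this is a probability measure on $K$, and since the inequality $\langle\mu,f\rangle\geq 0$ for $f\in\cG(z,\lambda)$ is preserved under multiplication by a positive scalar, $\tilde\mu\in\cGprime(z,\lambda)$. The only conceptual point to be careful about is distinguishing the roles of the positive cone (used to force $\mu\geq 0$) and the interior point $\1_K$ (used to force $\mu(K)>0$); both are supplied by Lemma \ref{lemma 0 in cG}, so no genuine obstacle arises and the argument is essentially a clean Hahn--Banach plus Riesz representation.
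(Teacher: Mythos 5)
Your proof is correct and follows essentially the same route as the paper: Hahn--Banach separation of a point outside $\overline{\cG(z,\lambda)}$, use of the cone structure and the inclusion of the positive cone to obtain a positive functional, Riesz representation, and normalization. The extra argument you give for $\mu(K)>0$ via the interior point $\1_K$ is valid but not strictly needed, since any nonzero positive linear functional on $\C(K)$ with $K$ compact automatically satisfies $L(\1_K)>0$ (otherwise positivity and $-\1_K\le h\le \1_K$ would force $L\equiv 0$).
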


\bproof
By Lemma \ref{lemma 0 in cG} and Propositions \ref{prop cG convex} and \ref{prop nontrivial cone}, we 
know that $\overline{\cG(z,\gl)}$ is a proper closed and convex subset of $\C(K)$ with nonempty interior, in particular 
there exists a function $f_0$ in $\C(K)$ such that $f_0\not\in \overline{{\cG}(z,\gl)}$. By Hahn-Banach Theorem, there exists $\mu\in\C(K)^*$, $\mu\not=0$, such that 
\[
\lan \mu,f_0\ran
<
\lan \mu,f\ran
\qquad
\FORALL f\in  {\cG}(z,\gl).
\]
Being ${\cG}(z,\gl)$ a positive cone, this is only possible if 
\[
\lan \mu,f\ran
\geq 
0
\qquad
\FORALL f\in  {\cG}(z,\gl).
\]
By Lemma \ref{lemma 0 in cG}, any nonnegative $\chi\in\C(\TM)$ is in ${\cG}(z,\gl)$. Hence,
\[
\lan\mu,\chi\ran\geq 0 \quad\FORALL \chi\in C_+(K),
\]
where $C_+(K):=\{g\in\C(K)\mid g\geq 0 \ \ON\  K\}$. 
By Riesz Theorem, $\mu$ can be identified with a positive Radon measure with compact support. 
By normalization, we may assume that $\mu$ is a probability measure on $K$, finally showing that $\mu\in{\cGprime}(z,\gl)$.
%
\eproof

We are now in position to prove the following theorems, that will be crucial for our forthcoming analysis. We start with the case $\gl>0$. 

\begin{theorem}\label{thm discounted Mather measures} Let $\lambda>0$, $z\in\T^d$ and $\phi\in\C(K)$. Let $u_\lambda\in\C(\T^d)$ be the 
unique solution to \eqref{eq HJ phi}. Then 
there exists a measure $\mu_\lambda\in{\cGprime}(z,\lambda)$
such that
\begin{equation}\label{claim discounted minimization}
\gl u_\lambda(z)=\int_K\phi(x,\xi)\,\mu_\lambda(dxd\xi)=\min_{\mu\in{\cGprime}(z,\lambda)}\int_K \phi(x,\xi)\,\mu(dxd\xi).
\end{equation}
\end{theorem}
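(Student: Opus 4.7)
The plan is to establish two matching bounds: a lower bound via duality and an upper bound via Hahn-Banach separation.

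For the lower bound, I would fix an arbitrary $\mu\in\cGprime(z,\lambda)$ and exploit the fact that $(\phi,u_\lambda)\in\cF(\lambda)$, which puts $\phi-\lambda u_\lambda(z)$ in $\cG(z,\lambda)$ by definition. Testing $\mu$ against this element, and using that $\mu$ is a probability measure,
\[
0\leq \langle \mu,\phi-\lambda u_\lambda(z)\rangle = \int_K\phi\,d\mu - \lambda u_\lambda(z),
\]
so $\lambda u_\lambda(z)\leq \int_K\phi\,d\mu$ for every $\mu\in\cGprime(z,\lambda)$. This already gives one direction of the equality claimed.

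For the reverse inequality, the key input is Lemma \ref{lem.ma24}: for every $\eps>0$ the point $\phi-\lambda u_\lambda(z)-\eps$ lies outside $\cG(z,\lambda)$, which by Proposition \ref{prop G(lambda,z) closed} is a closed convex set in $\C(K)$. I would apply the geometric Hahn-Banach theorem to strictly separate this point from $\cG(z,\lambda)$: there exists a nonzero $\mu_\eps\in\C(K)^*$ such that
\[
\langle \mu_\eps,\phi-\lambda u_\lambda(z)-\eps\rangle < \inf_{f\in\cG(z,\lambda)}\langle \mu_\eps,f\rangle.
\]
Since $\cG(z,\lambda)$ is a cone containing $0$, the infimum on the right equals $0$ and $\mu_\eps$ must be nonnegative on $\cG(z,\lambda)$. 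Since $C_+(K)\subset\cG(z,\lambda)$ by Lemma \ref{lemma 0 in cG}, Riesz's theorem identifies $\mu_\eps$ with a nonzero positive Radon measure on $K$; normalizing by $\mu_\eps(K)>0$ yields a probability measure $\tilde\mu_\eps\in\cGprime(z,\lambda)$ satisfying
\[
\int_K\phi\,d\tilde\mu_\eps \;<\; \lambda u_\lambda(z)+\eps.
\]

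Finally, since $K$ is compact, the set of Borel probability measures on $K$ is weak-* compact, so along a subsequence $\tilde\mu_\eps\rightharpoonup \mu_\lambda$ with $\mu_\lambda$ a probability measure. The cone $\cGprime(z,\lambda)$ is weak-* closed in the space of probability measures (being the intersection of the half-spaces $\{\mu:\langle\mu,f\rangle\geq 0\}$, $f\in\cG(z,\lambda)$), so $\mu_\lambda\in\cGprime(z,\lambda)$, and passing to the limit gives $\int_K\phi\,d\mu_\lambda\leq \lambda u_\lambda(z)$. Combined with the lower bound, this yields \eqref{claim discounted minimization}. The main delicate step is verifying that the separating functional $\mu_\eps$ is genuinely positive after normalization so that it lies in $\cGprime(z,\lambda)$; this is handled by the cone structure of $\cG(z,\lambda)$ together with the inclusion $C_+(K)\subset\cG(z,\lambda)$ from Lemma \ref{lemma 0 in cG}.
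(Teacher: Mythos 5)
Your proof is correct, but it takes a somewhat different route from the paper's. The paper invokes Lemma \ref{lem.ma24} to conclude that $\phi-\lambda u_\lambda(z)$ lies on the \emph{boundary} of the closed convex set $\cG(z,\lambda)$, and then applies the supporting-hyperplane form of Hahn--Banach at that boundary point, using the nonempty interior from Lemma \ref{lemma 0 in cG}. This produces, in one shot, a nonzero $\mu_\lambda$ with $\langle\mu_\lambda,\phi-\lambda u_\lambda(z)\rangle\leq\langle\mu_\lambda,f\rangle$ for all $f\in\cG(z,\lambda)$; the cone structure then forces $\mu_\lambda\geq0$ on $\cG(z,\lambda)$ and $\langle\mu_\lambda,\phi-\lambda u_\lambda(z)\rangle=0$, and normalization finishes. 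You instead use the first assertion of Lemma \ref{lem.ma24} — that $\phi-\lambda u_\lambda(z)-\eps\notin\cG(z,\lambda)$ for every $\eps>0$ — and strictly separate each such point from the closed convex set, obtaining an $\eps$-approximate minimizer $\tilde\mu_\eps\in\cGprime(z,\lambda)$ with $\int_K\phi\,d\tilde\mu_\eps<\lambda u_\lambda(z)+\eps$, after which you pass to the limit by weak-* compactness of $\parts(K)$ and weak-* closedness of $\cGprime(z,\lambda)$. The trade-off is clear: your strict-separation step does not need the nonempty interior of $\cG(z,\lambda)$ (a single point is compact, so one can separate it from any disjoint closed convex set), but you pay for this by needing the extra compactness-and-limit argument. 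Since the interior is already known to be nonempty (Lemma \ref{lemma 0 in cG}), the paper's one-shot supporting-hyperplane argument is the more economical of the two; yours is a perfectly sound alternative and would generalize more easily to settings where the interior is empty. One small point of care you correctly flag: after separation you must check $\mu_\eps(K)>0$ before normalizing, which follows from $\mu_\eps$ being a nonzero positive functional once Riesz representation is applied.
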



\bproof By definition of $u_\lambda$ and ${\cG'}(z,\gl)$, it is clear that $\phi-\gl u_\gl(z)\in{\cG}(z,\gl)$ and 
$\langle \mu, \phi-\gl u_\gl(z)\rangle \geq 0$ for all $\mu\in {\cG'}(z,\gl)$, namely
\[
\int_K \phi(x,\xi)\,\mu(dxd\xi)\geq \gl u_\gl(z)\qquad\FORALL \mu\in {\cG'}(z,\gl).
\]
Let us proceed to show the existence of a measure $\mu_\gl\in {\cG'}(z,\gl)$ that 
realizes the equality in the above inequality. 
By Lemma \ref{lemma 0 in cG} and  Propositions \ref{prop cG convex} and \ref{prop G(lambda,z) closed}, we know 
that ${\cG}(z,\gl)$ is a proper convex and closed subset of $\C(K)$ with nonempty interior and  $\phi-\gl u_\lambda(z)\in\pl{\cG}(z,\gl)$.
By Hahn-Banach Theorem, there exists $\mu_\lambda\in\C(K)^*$, $\mu_\gl\not=0$, 
such that 
\[
\lan \mu_\lambda,\phi-\gl u_\lambda(z)\ran
\leq
\lan \mu_\lambda,f\ran
\quad
\FORALL f\in {\cG}(z,\gl).
\]
Being $\cG(z,\gl)$ a positive cone, this is only possible if $\lan \mu_\lambda,f\ran\geq 0$ for all $f\in\cG(z,\gl)$, and so $\min_{f\in\cG(z,\gl)} \lan \mu_\gl,f\ran=0$  in view of Lemma \ref{lemma 0 in cG}. From the fact that $\phi-\gl u_\lambda(z)\in\pl{\cG}(z,\gl)$, we conclude that   
\[
\lan \mu_\lambda,\phi-\gl u_\lambda(z)\ran=0.
\]
By Lemma \ref{lemma 0 in cG}, any nonnegative $\chi\in\C(K)$ is in ${\cG}(z,\gl)$. Hence,
\[
\lan\mu_\lambda,\chi\ran\geq 0
\quad
\FORALL \chi\in C_+(K),
\]
where $C_+(K):=\{g\in\C(K)\mid g\geq 0 \ \ON\  K\}$. 
By  Riesz Theorem, $\mu_\lambda$ can be identified with a positive Radon measure with compact support. 
By renormalization, we may assume that $\mu_\lambda$ is a probability measure. 
Thus $\mu_\lambda\in\parts(K)$ and satisfies 
\beq\label{eq25}
\gl u_\gl(z)
=
\int_{K} \phi(x,\xi)\,\mu_\lambda(dxd\xi),
\qquad
0
\leq
\lan
\mu_\gl,f
\ran
\quad
\FORALL 
f\in\cG(z,\gl),
\eeq
i.e., $\mu_\gl$ is the sought measure in ${\cG'}(z,\gl)$. 
\eproof

The corresponding statement when $\gl=0$ reads as follows.

\begin{theorem}\label{thm Mather measures} Let $\phi\in\C(K)$ and denote by  $c_\phi$ the critical constant associated with $H_\phi$ through \eqref{eq critical value}. Then 
there exists a measure $\mu_\lambda\in{\cGprime}(0)$
such that
\begin{equation}\label{claim discounted minimization}
-c_\phi=\int_K\phi(x,\xi)\,\mu_\lambda(dxd\xi)=\min_{\mu\in{\cGprime}(0)}\int_K \phi(x,\xi)\,\mu(dxd\xi).
\end{equation}
\end{theorem}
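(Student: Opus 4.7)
The plan is to mirror the proof of Theorem \ref{thm discounted Mather measures}, with the boundary point $\phi + c_\phi$ now taking the role previously played by $\phi - \gl u_\gl(z)$. The essential input is Lemma \ref{lemma phi+c boundary point}, which simultaneously provides two pieces of information: $\phi + c_\phi + \eps \in \cG(0)$ for every $\eps > 0$, and $\phi + c_\phi \in \pl \cG(0)$.

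First I would dispatch the easy lower bound. For any $\mu \in \cGprime(0)$ (which is a probability measure) and any $\eps > 0$, the definition of $\cGprime(0)$ together with the inclusion $\phi + c_\phi + \eps \in \cG(0)$ yields
\[
0 \leq \lan \mu, \phi + c_\phi + \eps \ran = \int_K \phi \, d\mu + c_\phi + \eps,
\]
so sending $\eps \to 0^+$ gives $\int_K \phi \, d\mu \geq -c_\phi$ for every $\mu \in \cGprime(0)$. This shows that $-c_\phi$ is a lower bound for the infimum in the desired identity.

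For the attainment, I would apply Hahn-Banach separation to the open convex set $\mathrm{int}(\cG(0))$, which is nonempty thanks to Lemma \ref{lemma 0 in cG} (it places $\1_K$ in the interior), and to the boundary point $\phi + c_\phi$. This yields a nonzero continuous linear functional $\mu \in \C(K)^*$ with
\[
\lan \mu, \phi + c_\phi \ran \leq \lan \mu, f \ran \qquad \text{for all } f \in \cG(0).
\]
Because $\cG(0)$ is a convex cone containing $\0_K$ (by Proposition \ref{prop cG convex} and Lemma \ref{lemma 0 in cG}), the rescaling $f \mapsto tf$ with $t \to +\infty$ forces $\lan \mu, f \ran \geq 0$ on $\cG(0)$, while $t \to 0^+$ forces $\lan \mu, \phi + c_\phi \ran \leq 0$. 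Using once more that $C_+(K) \subseteq \cG(0)$ by Lemma \ref{lemma 0 in cG}, Riesz's theorem identifies $\mu$ with a positive Radon measure on $K$; since $\mu \neq 0$ and $\mu \geq 0$, its total mass is strictly positive, so renormalization produces $\tilde \mu \in \parts(K) \cap \cGprime(0)$.

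Finally, the lower-bound argument applied to $\tilde \mu$ itself gives $\int_K \phi \, d\tilde \mu \geq -c_\phi$, while the inequality $\lan \tilde \mu, \phi + c_\phi \ran \leq 0$ from the separation step gives $\int_K \phi \, d\tilde \mu \leq -c_\phi$. Equality follows, establishing the minimization. The argument is structurally identical to the $\lambda > 0$ case, and the only point that deserves care is the correct use of Lemma \ref{lemma phi+c boundary point}: the role previously played by the sharp identity $\gl u_\lambda(z) = \int \phi \, d\mu_\lambda$ is now played by the characterization of $c_\phi$ as the unique solvability level of the critical equation, which is precisely what makes the approximations $\phi + c_\phi \pm \eps$ behave asymmetrically with respect to $\cG(0)$.
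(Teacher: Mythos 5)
Your proof is correct and follows essentially the same route as the paper: establish the lower bound $\int_K\phi\,d\mu\geq -c_\phi$ from $\phi+c_\phi+\eps\in\cG(0)$, then separate the boundary point $\phi+c_\phi$ from the interior of the cone $\cG(0)$ by Hahn-Banach, exploit the cone structure to deduce nonnegativity of the separating functional, and invoke $C_+(K)\subseteq\cG(0)$ together with Riesz to turn it into a probability measure in $\cGprime(0)$ realizing the minimum. The only cosmetic difference is that you combine the two one-sided inequalities at the end, whereas the paper deduces $\langle\mu,\phi+c_\phi\rangle=0$ directly from $\phi+c_\phi\in\partial\cG(0)$ before renormalizing; the content is the same.
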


\bproof
By definition of $c_\phi$ and the fact that $H_{\phi+c_\phi+\eps}=H_\phi-(c_\phi+\eps)$, we have that $\phi+c_\phi+\eps\in \cG(0)$ for every $\eps>0$, yielding $\lan \mu, \phi+c_\phi+\eps\ran\geq 0$ for every $\mu\in\cG'(0)$ and $\eps>0$.  By arbitrariness of $\eps>0$, we get 
\[
\int_K \phi(x,\xi)\,\mu(dxd\xi)\geq -c_\phi\qquad\FORALL \mu\in {\cGprime}(0).
\]
Let us proceed to show the existence of a measure $\mu_0\in {\cGprime}(0)$ that 
realizes the equality in the above inequality. 
By Lemma \ref{lemma 0 in cG} and  Propositions \ref{prop cG convex} and \ref{prop nontrivial cone}, we know 
that $\overline{\cG}(0)$ is a proper closed and convex subset of $\C(K)$ with nonempty interior and  $\phi+c_\phi\in\pl{\cG}(0)$.
By Hahn-Banach Theorem, there exists $\mu_\lambda\in\C(K)^*$, $\mu_\gl\not=0$, 
such that 
\[
\lan \mu_\lambda,\phi+c_\phi\ran
\leq
\lan \mu_\lambda,f\ran
\quad
\FORALL f\in {\cG}(0).
\]
Being $\cG(0)$ a positive cone, this is only possible if $\lan \mu_\lambda,f\ran\geq 0$ for all $f\in\cG(0)$, and so $\min_{f\in\cG(0)} \lan \mu_\gl,f\ran=0$  in view of Lemma \ref{lemma 0 in cG}. From the fact that $\phi+c_\phi\in\pl{\cG}(0)$, we conclude that   
\[
\lan \mu_\lambda,\phi+c_\phi\ran=0.
\]
Now we proceed along the same lines of the proof of Theorem \ref{thm discounted Mather measures} to conclude that $\mu_\gl$ can be chosen in $\parts(K)$ and hence in $\cG'(0)$. 
\eproof

\section{Asymptotic analysis}\label{sec.asymptotic}

Let us consider a nonlocal HJ equation of the form 
\beq\label{eq31}
\gl u-\II u +H(x,Du)=0 \ \ \IN \T^d,
\tag{HJ$^\lambda_\M$}
\eeq
where $\gl\geq 0$, and $H\in\C(\TM)$ is a Hamiltonian which is convex in $p$, i.e., it satisfies conditions {\bf (H2)}. 
%
The nonlocal operator $\II$ is of the form \eqref{def I} with $M:=\M$  and conditions {\bf (N),\,(J1),\,(J2)} are in force. 
We will furthermore assume that the following facts hold:
\begin{itemize}
\item[\bf (EX)] {\bf (Existence of solutions)} The equation \eqref{eq31} admits solutions for every $\lambda\in [0,1)$.\smallskip
\item[\bf (LB)] {\bf (Lipschitz bounds)} There exists a constant $\kappa >0$ such that any continuous solution $u$ of \eqref{eq31} with $\lambda\in [0,1)$ satisfies  \ $|Du(x)|\leq \kappa \ \ \hbox{for a.e. $x\in\M$.}$
%
\end{itemize}
In Section \ref{sec.examples} we will present a rather general class of Hamiltonians for which the above stated assumptions are satisfied.

In this section we will study the asymptotic behavior of the (unique) solution $u_\lambda$ of \eqref{eq31} as $\lambda\to 0^+$.  
In the sequel, solutions, subsolutions and supersolutions of equation \eqref{eq31} with $\lambda=0$ will be termed {\em critical}. 

\subsection{Preliminaries} 
In order to exploit the results obtained in Section \ref{sec.duality}, we need a localization argument. 
To this aim, we choose a compact subset  $Q=Q(\kappa)$ of $\R^d$ such that
\begin{equation*} 
\pl_pH(x,p)\subset Q \quad\FORALL (x,p)\in\T^d\tim \ol B_{\kappa}. 
\end{equation*}
Let us set 
\[
\widetilde H(x,p):=\max\{H(x,p), |p|^2-k\}\quad\FORALL\ (x,p)\in\TM,
\]
with $k\in\N$ chosen big enough so that $H=\widetilde H$ on $\M\times \overline B_\kappa$.
%
%
Let us denote by $L$ the Lagrangian associated with $\widetilde H$ via the Fenchel transform, i.e.,
\[
L(x,\xi):=\sup_{p\in\R^d} [\langle p,\xi\rangle-\widetilde H(x,p)]\qquad\hbox{for all $(x,p)\in\TM$.}
\]
Since $\widetilde H$ has superlinear growth in $p$, the above supremum is a maximum, in particular it is finite. Furthermore, $L\in\C(\T^d\tim\R^d)$ and it is convex and superlinear in $\xi$, as it is well known.

Let us set 
\[
H_L(x,p):=\max_{\xi\in Q}[\langle p,\xi\rangle-L(x,\xi)]
\quad
\FORALL (x,p)\in\TM.
\]
The following holds. 

\begin{lemma}\label{lem.ma31} We have that \ $H(x,p)=H_L(x,p)$ \ for all  $(x,p)\in \M \tim \ol B_{\kappa}$. 
%
%
\end{lemma}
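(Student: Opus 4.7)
My plan is to exploit standard Fenchel duality, observing that $\widetilde H$ is convex and superlinear in $p$, so it coincides with its biconjugate, and then show that the relevant maximizers lie in $Q$ when $(x,p)\in\M\times\overline B_\kappa$.

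First, I would note that $\widetilde H(x,\cdot)=\max\{H(x,\cdot),|\cdot|^2-k\}$ is convex (as a pointwise max of convex functions) and superlinear in $p$ (because $|p|^2-k$ dominates for large $|p|$). Consequently, the Fenchel–Moreau theorem applies and
\[
\widetilde H(x,p)=\sup_{\xi\in\R^d}\bigl[\lan p,\xi\ran-L(x,\xi)\bigr]\qquad\FORALL (x,p)\in\TM,
\]
with the supremum attained precisely at the points $\xi^*\in\partial_p\widetilde H(x,p)$. From the trivial inclusion $Q\subset\R^d$ and this formula, one immediately has
\[
H_L(x,p)=\max_{\xi\in Q}\bigl[\lan p,\xi\ran-L(x,\xi)\bigr]\leq \widetilde H(x,p)=H(x,p)\qquad\FORALL (x,p)\in\M\tim\overline B_\kappa,
\]
using $\widetilde H=H$ on $\M\tim\overline B_\kappa$ for the last equality.

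The reverse inequality is the main point. I would argue that the choice of $k$ (large enough that $H=\widetilde H$ on $\M\tim\overline B_\kappa$) in fact produces strict inequality $H(x,p)>|p|^2-k$ on a neighborhood of $\M\tim\overline B_\kappa$: indeed, $H$ is bounded on the compact set $\M\tim\overline B_{\kappa+1}$, while $|p|^2-k\leq (\kappa+1)^2-k$ there, and taking $k$ sufficiently large makes the latter strictly smaller than $\min_{\M\tim\overline B_{\kappa+1}}H$. Hence $\widetilde H\equiv H$ on an open neighborhood of $\M\tim\overline B_\kappa$, and therefore $\partial_p\widetilde H(x,p)=\partial_p H(x,p)$ for every $(x,p)\in\M\tim\overline B_\kappa$. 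By the choice of $Q$ in Section~\ref{sec.asymptotic}, this subdifferential is contained in $Q$.

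Combining, for every $(x,p)\in\M\tim\overline B_\kappa$ I can pick $\xi^*\in\partial_p\widetilde H(x,p)\subset Q$, for which the Fenchel equality gives $\widetilde H(x,p)=\lan p,\xi^*\ran-L(x,\xi^*)\leq H_L(x,p)$, closing the inequality with $H(x,p)=\widetilde H(x,p)=H_L(x,p)$. The step I anticipate as the main (though still mild) obstacle is justifying that $k$ can be taken large enough to ensure the strict inequality $H>|p|^2-k$ on a neighborhood of $\M\tim\overline B_\kappa$, so as to identify $\partial_p\widetilde H$ with $\partial_p H$ on the relevant set; this is where the superlinearity of $|p|^2-k$ and the compactness of $\M\tim\overline B_{\kappa+1}$ combined with continuity of $H$ are both used.
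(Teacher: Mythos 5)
Your proof is correct and follows essentially the same route as the paper's: identify $\widetilde H$ with its Fenchel biconjugate, observe that the supremum is attained on $\partial_p\widetilde H(x,p)$, and use the agreement $H=\widetilde H$ near $\overline B_\kappa$ together with the choice of $Q$ to conclude the maximizers lie in $Q$. The only (cosmetic) difference is in handling the boundary of $\overline B_\kappa$: the paper proves the identity on the open ball $B_\kappa$ and extends to $\overline B_\kappa$ by continuity, whereas you enlarge $k$ so that $H=\widetilde H$ on a neighborhood $\M\tim\overline B_{\kappa+1}$, giving $\partial_p\widetilde H=\partial_p H$ directly on the closed ball; both are valid, and your phrasing \emph{the choice of $k$ \dots in fact produces strict inequality} would read better as \emph{$k$ can be taken large enough to also produce strict inequality on $\M\tim\overline B_{\kappa+1}$}, since the minimal $k$ making $H=\widetilde H$ on $\overline B_\kappa$ need not give room to spare.
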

\bproof 
By continuity, it suffices to show that  $H=H_L$ on $\M\times B_\kappa$. Hence, let us fix $(x,p)\in \T^d\tim B_{\kappa }$. We have 
\begin{align*}
H(x,p)&=\widetilde H(x,p) = \sup_{\xi\in\R^d}[\langle p, \xi \rangle -L(x,\xi)]
=\!\!\max_{p\in\pl_\xi L(x,\xi)}[\langle p,\xi\rangle-L(x,\xi)]
=\!\!\max_{\xi\in\pl_p \widetilde H(x,p)}[\langle p,\xi\rangle-L(x,\xi)]
\\&
=\max_{\xi\in\pl_p H(x,p)}[\langle p,\xi\rangle-L(x,\xi)]
=\max_{\xi\in Q}[\langle p, \xi \rangle -L(x,\xi)]
=H_L(x,p), 
\end{align*}
where for the third to last equality we have used the fact that the identity  
$H=\widetilde H$ on $\M\times\ol B_\kappa$ implies in particular $\partial_p H=\partial_p \widetilde H$ on $\M\times B_\kappa$. 
\eproof

In view of the analysis performed in Section \ref{sec.duality}, we derive the following fact. 

\begin{proposition}\label{prop.equibounded}
The following holds:
\begin{itemize}
\item[\em (i)] for every fixed $\gl\in [0,1)$, any solution $u$ to \eqref{eq31} is also a solution to 
\beq\label{eq localized HJ}
\gl u-\II u +H_L(x,Du)=0 \ \ \IN \M.
\eeq
In particular, for every $\lambda\in (0,1)$, there exists a unique solution to \eqref{eq31} and it coincides with the unique solution to \eqref{eq localized HJ};\smallskip
\item[\em (ii)] the solutions $\{u_\lambda\mid\lambda\in (0,1)\}$ are equi-Lipschitz and equi-bounded. In particular, $\lambda\|u_\lambda\|_\infty\to 0$ as $\lambda\to 0^+$. 
\end{itemize}
\end{proposition}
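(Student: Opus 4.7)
My proof plan divides into the two items. For part (i), I would first observe that assumption (LB) forces every continuous solution $u$ of \eqref{eq31} to be $\kappa$-Lipschitz on $\M$: the a.e.\ gradient bound $|Du|\le\kappa$, combined with continuity on the torus, lifts to $u\in\Lip(\M)$ with Lipschitz constant at most $\kappa$. The next step is standard: if $\varphi\in \C^2_{\mathrm{b}}(\M)$ touches $u$ from above (or from below) at a point $\hat x$, then a one-sided Taylor expansion along $\pm D\varphi(\hat x)/|D\varphi(\hat x)|$, combined with the Lipschitz estimate on $u$, yields $|D\varphi(\hat x)|\le\kappa$. Lemma~\ref{lem.ma31} then gives $H(\hat x,D\varphi(\hat x))=H_L(\hat x,D\varphi(\hat x))$, so the viscosity sub- and supersolution tests for \eqref{eq31} and \eqref{eq localized HJ} agree at every admissible touching point; hence the two equations share exactly the same continuous solutions. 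Now $H_L=H_\phi$ with $\phi:=L|_K\in \C(K)$, and the remark immediately following \eqref{def H phi} shows that $H_L$ satisfies (H1); Theorem~\ref{lem.ma14} therefore produces a unique continuous solution to \eqref{eq localized HJ} for each $\lambda\in(0,1)$. Combined with existence via (EX), this yields item~(i).

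For part (ii), the equi-Lipschitz estimate is immediate from (LB) via the same promotion argument as above. For the uniform $L^\infty$ bound, I would fix a critical solution $v\in \C(\M)$, whose existence is ensured by (EX), and consider the shifted functions $v_{-}:=v-\max_{\M}v\le 0$ and $v_{+}:=v-\min_{\M}v\ge 0$. Since both $-\II$ and $H(x,\cdot)$ are invariant under additive constants of the unknown, $v_{\pm}$ remain critical solutions, and their signs promote them to a sub- and a supersolution of \eqref{eq31}, respectively, because $\lambda v_{-}-\II v_{-}+H(x,Dv_{-})=\lambda v_{-}\le 0$ and $\lambda v_{+}-\II v_{+}+H(x,Dv_{+})=\lambda v_{+}\ge 0$. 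Using part~(i) to replace $H$ by $H_L$ and applying the comparison principle of Proposition~\ref{prop comparison} to the localized equation, we obtain $v_{-}\le u_\lambda\le v_{+}$ on $\M$, so $\|u_\lambda\|_\infty\le\osc_{\M}v$ uniformly in $\lambda\in(0,1)$. The decay $\lambda\|u_\lambda\|_\infty\to 0$ as $\lambda\to 0^+$ is then immediate.

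The only somewhat delicate step is the passage from the a.e.\ bound in (LB) to the pointwise inequality $|D\varphi(\hat x)|\le\kappa$ at arbitrary viscosity touching points; everything else in the proof is a routine application of Lemma~\ref{lem.ma31}, Theorem~\ref{lem.ma14}, and Proposition~\ref{prop comparison}.
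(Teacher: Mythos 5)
Your proof is correct and follows essentially the same route as the paper: item (i) hinges on observing that at any viscosity touching point the test‑function gradient is bounded by $\kappa$ thanks to (LB), so Lemma~\ref{lem.ma31} makes $H$ and $H_L$ agree there and Theorem~\ref{lem.ma14} closes the uniqueness; item (ii) uses exactly the paper's trick of shifting a critical solution by a constant to produce a nonpositive sub‑ and nonnegative super‑solution of the discounted equation (again pushed to the $H_L$ version via (LB)) and then invoking the comparison principle of Proposition~\ref{prop comparison}. The only stylistic difference is that you normalize the critical solution by its max and min, giving the slightly sharper bound $\|u_\lambda\|_\infty\le\osc_\M v$, whereas the paper shifts by an integer $k\ge\|v\|_\infty$; this is immaterial.
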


\bproof
(i) Any solution $u_\lambda$ of \eqref{eq31} is also a solution of the same equation with $H_L$ in place of $H$ due to condition {\bf (LB)} and the fact that $H=H_L$ on $\M\times \overline B_{\kappa }$. The remainder of the assertion follows by applying Theorem \ref{lem.ma14} with $H:=H_L$.\\
\indent (ii) The equi-Lipschitz character of the functions  $\{u_\lambda\mid\lambda\in (0,1)\}$ holds by assumption (LB). Let us show that they are equi-bounded. Pick a critical solution $u$, i.e., a solution of \eqref{eq31} with $\lambda=0$. Choose a sufficiently large $k\in\N$ such that $\underline u:=u-k\leq 0\leq u+k=\overline u$ on $\M$.  It is easily seen that the functions $\underline u,\, \overline u$ are critical solutions as well. Furthermore,  due to their sign, they are,  respectively, a sub and a super solution of \eqref{eq31} for every $\lambda>0$, and hence of \eqref{eq localized HJ}  due to condition {\bf (LB)}.  By the Comparison Principle holding for equation  \eqref{eq localized HJ}, see Proposition \ref{prop comparison}, we infer that $\underline u\leq u_\lambda\leq \overline u$ in $\M$ for every $\lambda>0$. 
\eproof

\subsection{Mather and discounted Mather measures} We put ourselves in the setting presented in Section \ref{sec.duality} and we 
adopt the notation therein introduced, where we keep denoting by $Q=Q(\kappa)$ 
and $L$ the compact set and the Lagrangian introduced in the previous subsection. Let us set $K:=\M\times Q$.

For every fixed $\gl> 0$ and $z\in\M$, by definition of $\cG'(z,\gl)$ we have 
\[
\int_K L(x,\xi)\,\mu(dxd\xi)
\geq
\gl u_\gl(z)
\qquad
\FORALL
\mu\in\cG'(z,\gl).
\]
The existence of probability measures in $\cG'(z,\gl)$ that minimize the integral of $L$
is guaranteed by Theorem \ref{thm discounted Mather measures}. 
The precise statement is the following.

\begin{theorem}\label{thm.discounted Mather measures}Let $(z,\gl)\in \T^d\tim (0,+\infty )$ and $u_\gl$ 
be the unique solution to \eqref{eq31}. Then there exists $\mu_\gl\in\cG'(z,\gl)$ such that
\beq\label{claim discounted minimization 2}
\gl u_\gl(z)=\int_K L(x,\xi)\,\mu_\gl(dxd\xi)=\min_{\mu\in \cG'(z,\gl)}\int_K L(x,\xi)\,\mu(dxd\xi).
\eeq
\end{theorem}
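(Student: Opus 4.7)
The plan is to recognize this statement as essentially an instance of the abstract duality result Theorem \ref{thm discounted Mather measures}, specialized to the choice $\phi := L|_K$. The entire work of the preceding subsection was precisely to set things up so that this specialization becomes legitimate, so the proof should amount to carefully matching notation and invoking previously established facts.

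First, I would observe that the restriction $L|_K$ belongs to $\C(K)$, since $L\in \C(\T^d\times\R^d)$ and $K=\M\times Q$ is compact. By the very definition of $H_L$ given just before Lemma \ref{lem.ma31}, we have $H_L = H_\phi$ in the notation \eqref{def H phi} with $\phi := L|_K$. Thus the equation \eqref{eq localized HJ} is exactly the equation \eqref{eq HJ phi} associated with this choice of $\phi$, and the class $\cF(\lambda)$, and hence $\cG(z,\lambda)$ and $\cG'(z,\lambda)$, appearing in the statement are those corresponding to $H_L$.

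Next, I would invoke Proposition \ref{prop.equibounded}(i): the unique continuous solution of \eqref{eq31} coincides with the unique continuous solution of \eqref{eq localized HJ}. Consequently the function denoted $u_\lambda$ in the statement of Theorem \ref{thm.discounted Mather measures} is the same as the $u_\lambda$ appearing in Theorem \ref{thm discounted Mather measures} applied to $\phi := L|_K$. Applying that theorem directly then yields a probability measure $\mu_\lambda \in \cG'(z,\lambda)$ realising
\[
\lambda u_\lambda(z) = \int_K L(x,\xi)\,\mu_\lambda(dxd\xi) = \min_{\mu \in \cG'(z,\lambda)} \int_K L(x,\xi)\,\mu(dxd\xi),
\]
which is precisely \eqref{claim discounted minimization 2}.

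There is no real obstacle here beyond bookkeeping: the hard analytic work (Hahn–Banach separation, the boundary characterisation $\phi - \lambda u_\lambda(z) \in \partial \cG(z,\lambda)$ from Lemma \ref{lem.ma24}, positivity of the separating functional via the fact that $\cG(z,\lambda)$ is a convex cone containing all nonnegative functions) has already been carried out in Section \ref{sec.duality}, and the matching between $H$ and $H_L = H_{L|_K}$ on the sublevel set relevant to solutions has been secured by Lemma \ref{lem.ma31} together with the Lipschitz bound {\bf (LB)}. The only point worth flagging explicitly in the write-up is that Theorem \ref{thm discounted Mather measures} requires $\phi\in \C(K)$ (not on all of $\T^d\times \R^d$), which is why the restriction to the compact momentum set $Q$ introduced in the preliminaries is essential.
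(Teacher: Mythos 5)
Your proposal is correct and takes exactly the same route as the paper: Theorem \ref{thm.discounted Mather measures} is stated in the paper without a separate proof, preceded only by the remark that existence of minimizing measures ``is guaranteed by Theorem \ref{thm discounted Mather measures},'' i.e.\ by specializing that abstract duality result to $\phi := L|_K$ and using Proposition \ref{prop.equibounded}(i) to identify the relevant solution $u_\lambda$. Your write-up simply makes those identifications explicit, which is appropriate.
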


A measure $\mu\in \cG'(z,\gl)$ that satisfies \eqref{claim discounted minimization 2}  will be called {\em discounted Mather measure} for $L$. The set of discounted Mather measures will be denoted by $\Mis(z,\lambda,L)$ in the sequel.\smallskip

We proceed to show the existence of measures that solve the minimization problem \eqref{claim discounted minimization 2} for $\lambda=0$. 
This is guaranteed by Theorem \ref{thm Mather measures} after noticing that the critical constant $c$ associated with $H_L$ via \eqref{eq critical value} is equal to $0$ in view of condition {\bf (EX)} and Proposition \ref{prop LPV}.
We recall that the corresponding set  $\cG'(z,0)$ is independent of $z$ and will be  more simply denoted by  $\cG'(0)$.

\begin{theorem}\label{thm.Mather measures 1}
There exists $\mu\in\cG'(0)$
such that
\begin{equation}\label{claim Mather minimization}
0=\int_K L(x,\xi)\,\mu(dxd\xi)=\min_{\mu\in\cG'(0)}\int_K L(x,\xi)\,\mu(dxd\xi).
\end{equation}
\end{theorem}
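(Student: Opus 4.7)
The plan is to reduce this to an immediate application of Theorem \ref{thm Mather measures} with $\phi:=L\in\C(K)$; the entire content of the proposition is the assertion that the critical constant $c_L$ associated with the Hamiltonian $H_L$ (defined in \eqref{def H phi}) equals $0$, everything else being built into the already established duality machinery. Note that $L$ is the restriction to $K=\M\times Q$ of a continuous function on $\M\times\R^d$, so it legitimately plays the role of $\phi$ in Theorem \ref{thm Mather measures}.

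First, I would verify that $c_L=0$. Invoking assumption \textbf{(EX)} at $\lambda=0$, pick a critical solution $u\in\C(\M)$ of \eqref{eq31}. By \textbf{(LB)}, $u$ is $\kappa$-Lipschitz, so $Du(x)\in\overline B_\kappa$ almost everywhere and, via the standard viscosity machinery, every test function touching $u$ from above or below at a point has gradient in $\overline B_\kappa$ at that point. Lemma \ref{lem.ma31} gives $H(x,p)=H_L(x,p)$ on $\M\times\overline B_\kappa$, so $u$ is also a critical solution of
\begin{equation*}
-\II u + H_L(x,Du)=0\quad\IN\ \M,
\end{equation*}
as already noted in Proposition \ref{prop.equibounded}(i). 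Proposition \ref{prop LPV} then forces $c_L=0$, since the critical constant is the unique value of the right-hand side for which solutions exist.

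Next, I would apply Theorem \ref{thm Mather measures} to $\phi:=L$. This produces a probability measure $\mu\in\cGprime(0)$ with
\begin{equation*}
\int_K L(x,\xi)\,\mu(dxd\xi)=-c_L=0=\min_{\mu'\in\cGprime(0)}\int_K L(x,\xi)\,\mu'(dxd\xi),
\end{equation*}
which is precisely \eqref{claim Mather minimization}.

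There is really no obstacle: the heavy lifting (existence of minimizing probability measures in the dual cone, via Hahn-Banach and Riesz representation) has already been done inside Theorem \ref{thm Mather measures}. The only thing one must be mildly careful about is that the critical solution guaranteed by \textbf{(EX)} produces an actual viscosity solution of the localized equation with Hamiltonian $H_L$; this is the reason both \textbf{(EX)} and \textbf{(LB)} are invoked together, since without the Lipschitz bound one could not replace $H$ by $H_L$ in the critical equation.
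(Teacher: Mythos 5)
Your proposal is correct and follows essentially the same route as the paper: the paper's entire justification for Theorem \ref{thm.Mather measures 1} is the one-sentence remark that it follows from Theorem \ref{thm Mather measures} "after noticing that the critical constant $c$ associated with $H_L$ via \eqref{eq critical value} is equal to $0$ in view of condition {\bf (EX)} and Proposition \ref{prop LPV}." You simply spell out what the paper leaves implicit — that the passage from a critical solution of the $H$-equation to one of the $H_L$-equation uses {\bf (LB)} via Lemma \ref{lem.ma31} / Proposition \ref{prop.equibounded}(i) — which is indeed the intended reasoning.
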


A measure $\mu\in\cG'(0)$ that satisfies \eqref{claim Mather minimization}  will be called {\em Mather measure} for $L$. 
The set of Mather measures will be denoted by $\Mis(L)$ in the sequel.

We record here, for further use, the following result. 

\begin{lemma}\label{lem.ma36}
Let $z\in\M$ and $(\lambda_n)_n$ be an infinitesimal sequence in $(0,+\infty)$. For each $n\in\N$, let $\mu_n\in\Mis(z,\lambda_n,L)$. Then there exists a subsequence $(\mu_{n_k})_k$ that weakly converges to a probability measure $\mu$ on $K$, i.e. 
\[
\lim_{k} \int_K f(x,\xi)\,\mu_{n_k}(dxd\xi)
=
\int_K f(x,\xi)\,\mu(dxd\xi)
\qquad
\FORALL f\in\C(K).
\]
Furthermore, $\mu\in\Mis(L)$. 
\end{lemma}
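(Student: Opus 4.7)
The plan is to extract a weak-$*$ convergent subsequence by compactness, identify the value of $\int_K L\,d\mu$ as the limit of the minima, and then verify the dual cone condition $\mu\in\cGprime(0)$ by a constant-shifting trick that promotes critical subsolutions to discounted subsolutions.

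\emph{Compactness step.} Since $K=\M\tim Q$ is compact, $\parts(K)$ sits inside the unit ball of $\C(K)^*$, which is weak-$*$ compact by Banach--Alaoglu. Hence from $(\mu_n)_n\subset\parts(K)$ I can extract a subsequence $(\mu_{n_k})_k$ weakly-$*$ converging to some $\mu\in\C(K)^*$. Testing against $\1_K$ and nonnegative continuous functions, $\mu$ is a probability measure on $K$.

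\emph{Value of the integral.} By Theorem~\ref{thm.discounted Mather measures}, $\int_K L\,d\mu_{n_k}=\lambda_{n_k}u_{\lambda_{n_k}}(z)$. Proposition~\ref{prop.equibounded}(ii) gives $\lambda\|u_\lambda\|_\infty\to 0$, so the right-hand side tends to $0$. Since $L\in\C(K)$, weak-$*$ convergence yields $\int_K L\,d\mu_{n_k}\to \int_K L\,d\mu$, hence $\int_K L\,d\mu=0$.

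\emph{Verification that $\mu\in\cGprime(0)$.} This is the main step. Fix $f\in\cG(0)$; by definition there is $v\in\C(\M)$ with $(f,v)\in\cF(0)$, i.e.\ $v$ is a viscosity subsolution of $-\II u+H_f(x,Du)=0$. I set $\tilde v:=v-\|v\|_\infty$, so that $\tilde v\leq 0$ on $\M$ and $\tilde v\in\C(\M)$. The crucial observation is that $\II$ annihilates constants: for any test function $\varphi\in\C^2_{\mathrm{b}}(\M)$ and any $c\in\R$ one has $\II(\varphi+c)=\II\varphi$, since $\varphi(x+j(x,z))-\varphi(x)$ and $\langle j(x,z),D\varphi(x)\rangle$ are unaffected. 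Consequently, $\varphi$ is a test function touching $\tilde v$ from above at $\hat x$ iff $\varphi+\|v\|_\infty$ touches $v$ from above at $\hat x$, and
\[
-\II\varphi(\hat x)+H_f(\hat x,D\varphi(\hat x))\leq 0.
\]
Adding $\lambda_{n_k}\tilde v(\hat x)\leq 0$ to the left-hand side preserves the inequality, so $\tilde v$ is a viscosity subsolution of $\lambda_{n_k}u-\II u+H_f(x,Du)=0$. Thus $(f,\tilde v)\in\cF(\lambda_{n_k})$ and $f-\lambda_{n_k}\tilde v(z)\in\cG(z,\lambda_{n_k})$. Testing against $\mu_{n_k}\in\cGprime(z,\lambda_{n_k})$ and using that $\mu_{n_k}$ is a probability measure yields
\[
\int_K f\,d\mu_{n_k}\geq \lambda_{n_k}\tilde v(z).
\]
Passing to the limit in $k$ via weak-$*$ convergence and $\lambda_{n_k}\tilde v(z)\to 0$ gives $\int_K f\,d\mu\geq 0$. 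As $f\in\cG(0)$ was arbitrary, $\mu\in\cGprime(0)$. Combined with $\int_K L\,d\mu=0$ and the optimality in Theorem~\ref{thm.Mather measures 1}, this proves $\mu\in\Mis(L)$.

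\emph{Expected obstacle.} The only nontrivial point is the constant-shift argument upgrading $v$ to a subsolution of the discounted equation; this rests on the fact that $\II$ kills constants and on using the viscosity characterization correctly, rather than any pointwise manipulation of $\II$. All other steps are standard once this is in place.
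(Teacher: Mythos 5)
Your proof is correct and follows the same overall strategy as the paper's: weak-$*$ compactness of $\parts(K)$, the identity $\int_K L\,d\mu_{n_k}=\lambda_{n_k}u_{\lambda_{n_k}}(z)\to0$ from Theorem~\ref{thm.discounted Mather measures} and Proposition~\ref{prop.equibounded}(ii), and a promotion of elements of $\cG(0)$ to $\cG(z,\lambda_n)$ in order to exploit $\mu_n\in\cGprime(z,\lambda_n)$. The only implementation difference is in that last promotion: you subtract a constant from the critical subsolution $v$ to make it nonpositive, so that it becomes a subsolution of the discounted equation with the \emph{same} $f$; the paper instead keeps $u$ and replaces $\phi$ by $\phi+\lambda_n u$, using the exact identity $H_{\phi+\lambda_n u}=H_\phi-\lambda_n u$ to get $(\phi+\lambda_n u,\,u)\in\cF(\lambda_n)$. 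Both tricks are one-liners producing $\langle\mu_n,\,f+o(1)\rangle\geq0$ and then pass to the limit, so the proofs are essentially the same; your version is marginally more general in that it does not rely on the $\phi$-parametrization of the Hamiltonian.
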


\bproof
Being $K$ compact, up to extracting a subsequence (not relabeled), the measures  $(\mu_n)_n$ weakly converge to a probability measure $\mu$ on $K$.   In view of Proposition \ref{prop.equibounded}-(ii), we have 
\beq\label{eq minimal}
\int_K L(x,\xi)\,\mu(dxd\xi)
=
\lim_{n} \int_K L(x,\xi)\,\mu_n(dxd\xi)
=
\lim_{n} \lambda_n u_{\lambda_n}(z)
=0.
\eeq
It is left to show that $\mu\in\cG'(0)$. To this aim, let us fix $(\phi,u)\in\cF(0)$. It is easily seen that $(\phi+\gl_n u,u)\in\cF(\gl_n)$ and so $\phi+\gl_n u-\gl_n u(z)\in \cG(z,\gl)$ for every fixed $z\in\M$ and for all $n\in\N$, in particular 
\[
\lan \mu_n, \phi+\gl_n u-\gl_n u(z)\ran \geq 0
\qquad
\FORALL n\in\N.
\]
Sending $n\to +\infty$, we get\ \ $\lan \mu,\phi\ran\geq 0$. This shows that $\mu\in\cG'(0)$ by the arbitrariness of the choice of $(\phi,u)\in\cF(0)$,  and hence that 
that $\mu\in\Mis(L)$ in view of \eqref{eq minimal}. 
\eproof

\subsection{Asymptotic convergence.}\label{sec.convergence} In this subsection we aim to show that the solutions $u_\lambda$ of \eqref{eq31} converge, as $\lambda\to 0^+$, to a specific critical solution $u_0$. The first step consists in identifying a good candidate for $u_0$. 
To this aim, we consider the family $\sol_-$ of critical solutions  $u:\M\to \R$ 
satisfying the following condition
\begin{equation}\label{CONDITiON U0}
\int_K u(x)\,\mu(dxd\xi)\leq 0 \qquad\text{for every $\mu\in\Mis(L)$}.
\end{equation}
We note that $\sol_-$ is nonempty. Indeed, $u-\lVert u\rVert_\infty$ is in $\sol_-$ whenever $u$ is a critical solution. 

\begin{lemma}\label{lemma F meno}
The family $\sol_-$ is uniformly bounded from above, i.e.
$$\sup\{u(x)\mid x\in \M,u\in \sol_-\}<+\infty.$$
\end{lemma}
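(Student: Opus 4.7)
The plan is to combine the Lipschitz estimate furnished by assumption \textbf{(LB)} with the averaging constraint \eqref{CONDITiON U0} tested against a single Mather measure.

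First, I would invoke Theorem \ref{thm.Mather measures 1} to fix one specific Mather measure $\mu_\ast\in\Mis(L)$, whose existence is already guaranteed. For any $u\in\sol_-$, specialising \eqref{CONDITiON U0} to $\mu=\mu_\ast$ and using that the integrand depends only on $x$ while $\mu_\ast$ is a probability measure on $K=\M\tim Q$, I obtain
\[
\min_{\M} u\;\leq\;\int_K u(x)\,\mu_\ast(dxd\xi)\;\leq\;0.
\]

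Second, since every $u\in\sol_-$ is by definition a critical solution of \eqref{eq31} with $\lambda=0$, assumption \textbf{(LB)} ensures $|Du|\leq\kappa$ almost everywhere on $\M$, so $u$ is $\kappa$-Lipschitz on the torus and its oscillation is controlled by a quantity independent of $u$:
\[
\osc_\M u\;\leq\;\kappa\operatorname{diam}(\M)\;<\;+\infty.
\]

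Combining the two displays yields
\[
\max_\M u\;\leq\;\min_\M u+\osc_\M u\;\leq\;\kappa\operatorname{diam}(\M),
\]
and the right-hand side is independent of the choice of $u\in\sol_-$, which is precisely the asserted uniform upper bound.

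The only conceptual point is the observation that a \emph{single} Mather measure already suffices to pin down $\min_\M u$ from above; the remainder is a routine oscillation argument, and no substantive obstacle is present.
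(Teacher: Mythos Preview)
Your proof is correct and follows essentially the same approach as the paper: pick a single Mather measure to deduce $\min_\M u\leq 0$ from \eqref{CONDITiON U0}, then use the common $\kappa$-Lipschitz bound from \textbf{(LB)} to control the oscillation and conclude $\max_\M u\leq \kappa\operatorname{diam}(\M)$.
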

\begin{proof} 
According to the assumption {\bf (LB)}, the family of critical solutions is equi--Lipschitz, with $\kappa $ as a common Lipschitz constant. Pick $\mu\in\Mis(L)$. For any $u\in\sol_-$, we have 
\[
\min_\M u=\int_K \min_\M u\ d\mu\leq \int_K u\,d\mu\leq 0.
\]
We infer \quad $\max_\M u\leq \max_\M u-\min_\M u\leq \kappa  \operatorname{diam}(\M)=\kappa \sqrt d$.
 \end{proof}
Therefore we can define  $u_0:\M\to \R$ by
\begin{equation}\label{first def u_0}
u_0(x):=\sup_{u\in\sol_- }u(x)\qquad\hbox{for all $x\in\M$}.
\end{equation}
As the supremum of a family of equi--Lipschitz critical solutions, we derive that $u_0$ is itself a Lipschitz critical subsolution, see Proposition \ref{prop sup inf}. We will obtain later, as a consequence of the asymptotic convergence, that $u_0$ is an element of $\sol_-$, see Theorem \ref{thm main} below.

We now start to study the asymptotic behavior of the discounted solutions $u_\lambda$ as $\lambda\to 0^+$ and the relation with $u_0$.  We begin with the following result:

\begin{proposition}\label{prop.ineq lim}
For every fixed $\lambda>0$, we have 
\begin{equation}\label{eq ineq lim}
\lan \mu, u_\gl\ran
=
\int_K u_\lambda(x)\, \mu(dxd\xi)\leq 0\qquad\hbox{for every $\mu\in\Mis (L)$.} 
\end{equation}
In particular, if the functions $u_{\lambda_n}$ uniformly converge to $u$ for some sequence $\lambda_n\to 0^+$, then 
$u\in\sol_-$ and hence $u\leq u_0$ on $\M$.
\end{proposition}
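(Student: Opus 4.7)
\bproof[Proof plan]
The plan is to encode $u_\lambda$ as the second component of an element of $\cF(0)$ built from a shifted Lagrangian, then use the dual-cone characterization of Mather measures.

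First I would observe that, by Proposition \ref{prop.equibounded}(i), $u_\lambda$ is the unique viscosity solution of $\gl u-\II u+H_L(x,Du)=0$ in $\M$, and in particular a viscosity subsolution. Since $L-\lambda u_\lambda(x)\in\C(K)$ (as $u_\lambda\in\C(\M)$ and $\lambda u_\lambda(x)$ is a function of $x$ alone), and since
\[
H_L(x,p)+\lambda u_\lambda(x)=\max_{\xi\in Q}\bigl[\langle p,\xi\rangle-\bigl(L(x,\xi)-\lambda u_\lambda(x)\bigr)\bigr]=H_{L-\lambda u_\lambda}(x,p),
\]
the subsolution inequality for $u_\lambda$ rewrites as\ $-\II u_\lambda+H_{L-\lambda u_\lambda}(x,Du_\lambda)\leq 0$\ in $\M$ in the viscosity sense. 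In other words, $(L-\lambda u_\lambda,u_\lambda)\in\cF(0)$, so $L-\lambda u_\lambda\in\cG(0)$.

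Next I invoke the dual-cone property: any $\mu\in\Mis(L)$ belongs to $\cG'(0)$ by Theorem \ref{thm.Mather measures 1}, whence $\lan\mu,\,L-\lambda u_\lambda\ran\geq 0$. Because $\mu$ is a Mather measure, Theorem \ref{thm.Mather measures 1} also gives $\lan\mu,L\ran=0$, so
\[
\lambda\int_K u_\lambda(x)\,\mu(dxd\xi)=\lan\mu,\lambda u_\lambda\ran\leq \lan\mu,L\ran=0,
\]
which, after dividing by $\lambda>0$, yields \eqref{eq ineq lim}.

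Finally, for the convergence statement, assume $u_{\lambda_n}\to u$ uniformly on $\M$ along some $\lambda_n\to 0^+$. The classical stability of viscosity sub/supersolutions under uniform convergence, combined with $\lambda_n\|u_{\lambda_n}\|_\infty\to 0$ (Proposition \ref{prop.equibounded}(ii)), shows that $u$ is a critical solution of $-\II u+H(x,Du)=0$ in $\M$. Moreover, for any $\mu\in\Mis(L)$, uniform convergence together with the compactness of $K$ allows one to pass to the limit in \eqref{eq ineq lim} applied along $\lambda_n$, giving $\int_K u(x)\,\mu(dxd\xi)\leq 0$. Hence $u$ satisfies \eqref{CONDITiON U0}, i.e. $u\in\sol_-$, and therefore $u\leq u_0$ on $\M$ by the very definition \eqref{first def u_0}.

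The only delicate point is ensuring that the rewriting of the subsolution inequality in terms of $H_{L-\lambda u_\lambda}$ is legitimate at the viscosity level; this is straightforward here because $\lambda u_\lambda(x)$ enters as a zero-order continuous term that can be absorbed into the $\phi$-slot of $H_\phi$ without affecting the test-function test, but it is the step I would state most carefully.
\eproof
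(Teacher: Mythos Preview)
Your proof is correct and follows essentially the same approach as the paper's: show that $(L-\lambda u_\lambda,u_\lambda)\in\cF(0)$, pair with a Mather measure $\mu\in\cG'(0)$, and use $\lan\mu,L\ran=0$ to conclude. The paper simply asserts $(L-\gl u_\gl,u_\gl)\in\cF(0)$ without elaboration, whereas you carefully justify it by rewriting $\gl u_\lambda - \II u_\lambda + H_L(x,Du_\lambda)\leq 0$ as $-\II u_\lambda + H_{L-\lambda u_\lambda}(x,Du_\lambda)\leq 0$; this extra care is fine and not a different route.
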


\begin{proof} 
Let us fix $\mu\in\Mis(L)$. Since $(L-\gl u_\gl,u_\gl)\in\cF(0)$, we find that 
\[
0
\leq 
\lan \mu, L-\gl u_\gl\ran
=
\lan \mu, L\ran 
-
\gl \lan \mu,u_\gl\ran 
=
-\gl \lan \mu, u_\gl\ran,
\]
and thus $\lan \mu, u_\gl\ran \leq 0$. 
If $u$ is the uniform limit of $\left(u_{\lambda_n}\right)_n$ for some $\lambda_n\to 0^+$, then $u$ is a critical solution and satisfies $\lan \mu, u\ran \leq 0$ for every $\mu\in\Mis(L)$ in view of \eqref{eq ineq lim}.  
Therefore  $u\in \sol_-$ and $u\leq u_0$. 
 \end{proof}

The next (and final) step is to show that $u\geq u_0$ in $\M$ whenever $u$ is the uniform limit of  $\left(u_{\lambda_n}\right)_n$ for some $\lambda_n\to 0^+$.

The following lemma will be crucial for the proof of the convergence result, see Theorem \ref{thm main} below.
\begin{lemma}\label{ineq prim}
Let $(z,\gl)\in \T^d\tim (0,+\infty )$ and $\mu_\lambda\in\Mis(z,\gl,L)$ be a discounted Mather measure. For any 
critical subsolution $v\in\C(\M)$, we have
\begin{equation}\label{eq pre-final}
u_\lambda(z)\geq v(z)-\int_{K} v(x)\, {\mu}_{\lambda} (dxd\xi).
\end{equation}
\end{lemma}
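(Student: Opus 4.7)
My plan is to exploit the dual cone structure of $\mu_\lambda$ to reduce the target inequality to a single membership statement in $\cG(z,\gl)$, which in turn reduces to a viscosity subsolution property for $v$ with respect to the localized Hamiltonian $H_L$.

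The first step is a purely algebraic reformulation. Since $\gl u_\gl(z)=\int_K L\,d\mu_\gl$ by Theorem \ref{thm.discounted Mather measures}, multiplying the desired inequality by $\gl>0$ and rearranging (using that $\mu_\gl$ is a probability measure, so $\int_K \gl v(z)\,d\mu_\gl=\gl v(z)$) yields the equivalent form
\[
\int_K \bigl(L(x,\xi) + \gl v(x) - \gl v(z)\bigr)\,\mu_\gl(dxd\xi) \geq 0.
\]
Since $\mu_\gl\in\cGprime(z,\gl)$, it suffices to show that the integrand $(x,\xi)\mapsto L(x,\xi)+\gl v(x)-\gl v(z)$ lies in $\cG(z,\gl)$.

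Next, I would realize this function through the natural pair $(\phi,u):=(L+\gl v,\,v)\in\C(K)\times\C(\M)$, for which $\phi-\gl u(z)=L+\gl v-\gl v(z)$ is precisely the function at hand. A direct computation gives
\[
H_{L+\gl v}(x,p) = \max_{\xi\in Q}\bigl[\lan p,\xi\ran - L(x,\xi) - \gl v(x)\bigr] = H_L(x,p) - \gl v(x),
\]
so membership $(L+\gl v,v)\in\cF(\gl)$ reduces to the single requirement that $v$ be a viscosity subsolution of the localized critical equation
\[
-\II u + H_L(x,Du) \leq 0 \quad\text{in } \M.
\]

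The final step is to verify this localized property from the assumption that $v$ is a critical subsolution of $-\II u+H(x,Du)\leq 0$. By Lemma \ref{lem.ma31}, the Hamiltonians $H$ and $H_L$ coincide on $\M\times\overline{B}_\kappa$, so if every $\C^2_{\mrm{b}}$-supertangent $\varphi$ to $v$ at a point $\hat x$ satisfies $|D\varphi(\hat x)|\leq\kappa$, the two subsolution conditions are pointwise identical at the relevant test points, and the desired inequality holds. This ``bounded test-gradient'' property is equivalent to $v$ itself being $\kappa$-Lipschitz, and this is the main technical hurdle: while (LB) a priori guarantees the bound only for solutions, the same coercivity mechanism extends it to critical subsolutions---either via a direct doubling-variables argument controlling the nonlocal term through $\C^2_{\mrm{b}}$ test functions, or by approximating $v$ uniformly with $\kappa$-Lipschitz critical subsolutions and passing to the limit in the resulting memberships in $\cG(z,\gl)$ via continuity of integration against $\mu_\gl$. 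With this Lipschitz estimate in hand, the reduction above concludes the proof.
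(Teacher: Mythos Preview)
Your approach coincides with the paper's: assert $(L+\gl v,v)\in\cF(\gl)$, apply the dual-cone inequality $\langle\mu_\gl,L+\gl v-\gl v(z)\rangle\geq 0$, and rearrange using $\langle\mu_\gl,L\rangle=\gl u_\gl(z)$. The paper does precisely this in one line, without unpacking the membership claim.

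You go further than the paper in observing that $(L+\gl v,v)\in\cF(\gl)$ amounts to $v$ being a subsolution of the \emph{localized} critical equation $-\II u+H_L(x,Du)=0$, and that the passage from $H$ to $H_L$ is only immediate when the test gradients lie in $\overline B_\kappa$. Your caution here is warranted: under \textbf{(H2)} alone $H$ is not coercive, so a general continuous critical subsolution need not be $\kappa$-Lipschitz, and neither of your proposed remedies---a coercivity-driven doubling argument, or uniform approximation by $\kappa$-Lipschitz subsolutions---is justified by the stated hypotheses. The paper's proof is silent on this point and carries the same gap in full generality. For the actual use of the lemma in Theorem~\ref{thm main}, however, $v$ is always a critical \emph{solution} (an element of $\sol_-$, or later $u_0$), hence $\kappa$-Lipschitz by \textbf{(LB)}; in that restricted setting your reduction via Lemma~\ref{lem.ma31} is rigorous and matches what is needed.
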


\begin{proof}
Let $v\in\C(\M)$ be a critical subsolution. We have $(L+\gl v,v)\in\cF(\gl)$, hence 
$L+\gl v-\gl v(z)\in\cG(z,\gl)$. We derive  
\[
0
\leq 
\lan L+\gl v-\gl v(z),\mu_\gl\ran
=
\lan L,\mu_\gl\ran +\gl \lan v, \mu_\gl\ran -\gl v(z)
=
\gl \big(u_\gl (z) +\lan v, \mu_\gl\ran -v(z)\big).
\] 
By dividing the above inequality by $\gl >0$, we get\ \ 
$u_\gl(z) \geq v(z)-\lan v, \mu_\gl\ran$,
\ 
namely \eqref{eq pre-final}. 
\end{proof}

We are now ready to prove the convergence result. 

\begin{theorem}\label{thm main}
The functions $\{u_\lambda\mid\gl\in (0,1)\}$ uniformly converge to  $u_0$ on $\M$ {as $\lambda\to 0^+$ }.  
Furthermore, $u_0$ belongs to $\sol_-$ and 
\[
\max_{\mu\in\Mis(L)}\int_K u_0(x)\, \mu(dxd\xi)=0.
\]
\end{theorem}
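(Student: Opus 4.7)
The plan is to prove the assertions in four stages: first, extract uniform subsequential limits; second, show each such limit coincides with $u_0$; third, upgrade to convergence of the whole family; fourth, verify the properties of $u_0$ itself.

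By Proposition \ref{prop.equibounded}, the family $\{u_\lambda\mid \lambda\in (0,1)\}$ is equi-Lipschitz and equi-bounded, and $\lambda\|u_\lambda\|_\infty\to 0$. Hence, given any sequence $\lambda_n\to 0^+$, Arzel\`a--Ascoli produces a subsequence (not relabeled) along which $u_{\lambda_n}$ converges uniformly on $\M$ to some Lipschitz $u$; by stability of viscosity solutions $u$ is a critical solution. Proposition \ref{prop.ineq lim} immediately yields $u\in\sol_-$, hence $u\leq u_0$ on $\M$.

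The heart of the argument is the reverse inequality $u\geq u_0$ on $\M$, and here the key move is to use Lemma \ref{ineq prim} with test functions $v\in\sol_-$ rather than with $v=u_0$ itself, since at this stage $u_0\in\sol_-$ is not yet known. Fix $z\in\M$ and, for each $n$, choose a discounted Mather measure $\mu_{\lambda_n}\in\Mis(z,\lambda_n,L)$. By Lemma \ref{lem.ma36}, along a further subsequence (again not relabeled), $\mu_{\lambda_n}\rightharpoonup \mu\in\Mis(L)$. For any $v\in \sol_-$, Lemma \ref{ineq prim} gives
\[
u_{\lambda_n}(z)\geq v(z)-\int_K v(x)\,\mu_{\lambda_n}(dxd\xi).
\]
Letting $n\to\infty$, using the uniform convergence of $u_{\lambda_n}$ to $u$, the weak convergence of $\mu_{\lambda_n}$ to $\mu$ paired with the continuous function $v$, and the defining property $\int_K v\,d\mu\leq 0$ of elements of $\sol_-$, one obtains $u(z)\geq v(z)$. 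Taking the supremum over $v\in \sol_-$ gives $u(z)\geq u_0(z)$, hence $u\equiv u_0$ on $\M$. Because the limit is independent of the extracted subsequence, a standard compactness argument shows that the whole family $u_\lambda$ converges uniformly to $u_0$ as $\lambda\to 0^+$.

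It remains to establish $u_0\in\sol_-$ and the maximum assertion. Since $u_0$ is the uniform limit of the critical solutions $u_\lambda$, it is itself a critical solution. Passing to the limit in the inequality $\int_K u_\lambda\,d\mu\leq 0$ from Proposition \ref{prop.ineq lim} yields $\int_K u_0\,d\mu\leq 0$ for every $\mu\in\Mis(L)$, so $u_0\in\sol_-$. Finally, to see that the supremum in the last formula is attained and equals $0$, we can now legitimately apply Lemma \ref{ineq prim} with $v:=u_0$, since $u_0$ is a Lipschitz critical subsolution: along the sequence $\mu_{\lambda_n}\rightharpoonup \mu\in\Mis(L)$ fixed above, the resulting inequality passes to the limit as $u_0(z)\geq u_0(z)-\int_K u_0\,d\mu$, i.e.\ $\int_K u_0\,d\mu\geq 0$. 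Combined with the upper bound $\int_K u_0\,d\mu\leq 0$ just obtained, this gives $\int_K u_0\,d\mu=0$, so the maximum is attained.

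The main obstacle is the argument for $u\geq u_0$: it is tempting but circular to try to use $v=u_0$ in Lemma \ref{ineq prim}, because before convergence is established we do not know $\int_K u_0\,d\mu\leq 0$. The resolution is to test only against $v\in\sol_-$ and exploit the stability of the pairing under weak convergence of the discounted Mather measures granted by Lemma \ref{lem.ma36}.
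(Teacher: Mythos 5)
Your proof is correct and follows essentially the same route as the paper: extract Arzelà--Ascoli subsequential limits, obtain $u\leq u_0$ from Proposition \ref{prop.ineq lim}, obtain $u\geq u_0$ by testing Lemma \ref{ineq prim} against $v\in\sol_-$ and passing to the limit of discounted Mather measures via Lemma \ref{lem.ma36}, and finally re-apply Lemma \ref{ineq prim} with $v=u_0$ once $u_0\in\sol_-$ is established. The non-circularity caveat you highlight (using $v\in\sol_-$ rather than $v=u_0$ in the comparison step) is exactly the point the paper's argument turns on.
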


\begin{proof}
By assumption {\bf (LB)} and Proposition \ref{prop.equibounded}, we know that the functions $u_\lambda$ are equi--Lipschitz and equi--bounded, 
hence it is enough, by the Ascoli--Arzel\`a theorem, to prove that any converging subsequence has  $u_0$ as limit.

Let $\lambda_n\to 0^+$ be such that $u_{\lambda_n}$ uniformly converges to some $u\in \C(\M)$. 
We have seen in Proposition \ref{prop.ineq lim} that\  \ 
$u(x)\leq u_0(x)\ \ \hbox{for every  $x\in \M$}.$
To prove the opposite inequality, let us fix $z\in \M$.  Let $v$ be a critical subsolution.
By Lemma \ref{ineq prim}, we have
\begin{equation*}
u_{\lambda_n}(z)\geq v(z)-\int_{K} v(x)\, {\mu}_{\lambda_n} (dxd\xi).
\end{equation*}
By Lemma \ref{lem.ma36}, up to extracting a further subsequence (not relabeled), we can assume that
the measures ${\mu}_{\lambda_n}$ weakly converge to a Mather measure $\mu_0\in\Mis(L)$. 
Passing to the limit in the last inequality,
we get  
\begin{equation}\label{eq.2nd inequality}
u(z)
\geq 
v(z)-\int_{K} v(x)\, {\mu_0} (dxd\xi).
\end{equation}
If we furthermore assume that 
$v\in \sol_-$, the set of solutions satisfying \eqref{CONDITiON U0}, we obtain 
\[
u(z)\geq w(z).
\] By the arbitrariness of the choice of $z\in\M$ and by definition of $u_0$, we conclude that \ \ $u(z)\geq u_0(z)=\sup\limits_{v\in\sol_-}v(z)$\ \ for all $z\in\M$.

As an accumulation point of the functions $\{u_\lambda\,\mid\,\lambda>0\,\}$ as $\lambda\to 0^+$, the function $u_0$ is a critical solution. The fact that $u_0\in\sol_-$ directly follows from Proposition \ref{prop.ineq lim}. By picking $v:=u_0$ in \eqref{eq.2nd inequality}, we infer that $\int_K u_0(x)\, \mu_0(dxd\xi)\geq 0$ for some $\mu_0\in\Mis(L)$, giving the last assertion.
\end{proof}

\section{Applications}\label{sec.examples}

In this section we present a class of convex and superlinear Hamiltonians to which we can apply the results established in Section \ref{sec.asymptotic}. 

To this aim, we introduce the class $\Ham$ made up of functions $G:\TR\to\R$ that are convex in $p$ and satisfy the following conditions, for some constants $\alpha_0,\alpha_1>0$ and $\gamma>1$:
	\begin{itemize}
		\item[\bf (G1)] \quad $\alpha_0|p|^\gamma-1/\alpha_0\leqslant G(x,p)\leqslant\alpha_1(|p|^\gamma+1)$\quad for all ${(x,p)\in\TR}$;\medskip
		\item[\bf (G2)]\quad $|G(x,p)-G(x,q)|\leqslant\alpha_1\left(|p|+|q|+1\right)^{\gamma-1}|p-q|$\quad for all $x\in\R^d$ and $p,q\in\R^d$;\medskip
	\item[\bf (G3)] \quad $|G(x,p)-G(y,p)|\leqslant\alpha_1(|p|^\gamma+1)|x-y|$\quad 
	for all $x,y\in\R^d$ and $p\in\R^d$.\medskip		
\end{itemize}
We will denote by $\Hamper$ the family of Hamiltonians $G$ in $\Ham$ that are $\Z^d$--periodic in $x$. 

We are going to show that, for every fixed $G\in\Hamper$, there exists a unique real constant $c$ such that the Hamiltonian $H(x,p):=G(x,p)-c$ satisfies conditions {\bf (EX)} and {\bf (LB)} in Section \ref{sec.asymptotic}. In order to do this, we will exploit the results proved in \cite{BLT17}, which apply in particular to a Hamiltonian $G$ satisfying conditions {\bf (G2)}-{\bf (G3)} above and the following one, for some constants $b,K>0$ and $\gamma>1$: 
\begin{itemize}
\item[\bf (BLT)]\quad  
$\mu G(x,p/\mu)-G(x,p) \geqslant (1-\mu)(b|p|^\gamma-K)
\quad
\hbox{for all $\mu\in (0,1)$ and $(x,p)\in\TR$.}$
\end{itemize}

We are going to show that the convexity of $G$ in $p$ and condition {\bf (G1)} above actually imply  {\bf (BLT)}, for the same constant $\gamma> 1$ and for some constants $b,K>0$ only depending on  $\alpha,\beta$ and $\gamma$. In fact, this is a direct consequence of the following two lemmas. 

\begin{lemma}\label{lemma growth convex}
Let $G:\TR\to\R$ be a convex Hamitonian satisfying the growth conditions stated in {\bf (G1)}. Then $G$ satisfies 
\begin{equation}\label{eq Barles equivalent}
\langle q,p\rangle -G(x,p)
\geqslant 
b |p|^\gamma-K
\qquad
\hbox{for all $(x,p)\in\TM$ and $q\in\partial_p G(x,p)$.}
\end{equation}
for the same constant $\gamma> 1$ and for some constants $b,K>0$ only depending on  $\alpha,\beta$ and $\gamma$.
\end{lemma}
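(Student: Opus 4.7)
The plan is to exploit the convexity of $G(x,\cdot)$ by testing the subgradient inequality against a dilate of $p$. For $q\in\partial_p G(x,p)$ and any scalar $\lambda>1$, convexity gives
\[
G(x,\lambda p)\;\ge\; G(x,p)+(\lambda-1)\langle q,p\rangle,
\]
which, upon rearranging, produces the identity
\[
\langle q,p\rangle-G(x,p)\;\ge\;\frac{G(x,\lambda p)-\lambda\,G(x,p)}{\lambda-1}.
\]

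Next, I would plug in the two-sided growth condition \textbf{(G1)}: the lower bound applied to $G(x,\lambda p)$ together with the upper bound applied to $\lambda\,G(x,p)$ yields
\[
\langle q,p\rangle-G(x,p)\;\ge\;\frac{\alpha_0\lambda^{\gamma}-\lambda\alpha_1}{\lambda-1}\,|p|^\gamma\;-\;\frac{1/\alpha_0+\lambda\alpha_1}{\lambda-1}.
\]
Since $\gamma>1$, the coefficient $\alpha_0\lambda^\gamma-\lambda\alpha_1$ is strictly positive as soon as $\lambda>(\alpha_1/\alpha_0)^{1/(\gamma-1)}$. Fixing, for instance, $\lambda:=1+(2\alpha_1/\alpha_0)^{1/(\gamma-1)}$ gives explicit positive constants $b=b(\alpha_0,\alpha_1,\gamma)$ and $K=K(\alpha_0,\alpha_1,\gamma)$ for which the desired inequality $\langle q,p\rangle-G(x,p)\ge b|p|^\gamma-K$ holds uniformly in $(x,p)$.

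The main obstacle is not technical but conceptual: the naive choice $p'=0$ in the subgradient inequality only yields the useless bound $\langle q,p\rangle-G(x,p)\ge -G(x,0)\ge -\alpha_1$, missing the $|p|^\gamma$ term entirely. The insight is that one must compare $G$ at $p$ with $G$ at a sufficiently dilated point $\lambda p$ with $\lambda>1$, so that the superlinear lower bound of \textbf{(G1)} on the left outweighs the linear-in-$\lambda$ upper bound on the right; once this is done the proof collapses to the algebraic computation above, and the regularity properties \textbf{(G2)} and \textbf{(G3)} play no role in this step.
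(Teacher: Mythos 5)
Your key rearrangement has the inequality pointing the wrong way, and this invalidates the argument. From the subgradient inequality $G(x,\lambda p)\geq G(x,p)+(\lambda-1)\langle q,p\rangle$ with $\lambda>1$, dividing by $\lambda-1>0$ and then subtracting $G(x,p)$ gives
\begin{equation*}
\langle q,p\rangle - G(x,p)\ \leq\ \frac{G(x,\lambda p)-\lambda G(x,p)}{\lambda-1},
\end{equation*}
i.e., an \emph{upper} bound, not a lower one. Indeed, this must be so: as $\lambda\to\infty$, your claimed lower bound $\frac{\alpha_0\lambda^\gamma-\lambda\alpha_1}{\lambda-1}|p|^\gamma - \frac{1/\alpha_0+\lambda\alpha_1}{\lambda-1}$ tends to $+\infty$, whereas the left-hand side $\langle q,p\rangle-G(x,p)$ is a fixed finite number, a contradiction.

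The remedy is to go in the opposite direction, i.e., to \emph{contract} rather than dilate. Taking $p'=tp$ with $t\in(0,1)$ in the subgradient inequality gives $G(x,tp)\geq G(x,p)-(1-t)\langle q,p\rangle$, and since now the coefficient $t-1<0$, dividing flips the inequality favorably and yields
\begin{equation*}
\langle q,p\rangle - G(x,p)\ \geq\ \frac{t\,G(x,p)-G(x,tp)}{1-t}.
\end{equation*}
Applying \textbf{(G1)} (lower bound to $G(x,p)$, upper bound to $G(x,tp)$) and choosing $t=\tau:=(\alpha_0/(2\alpha_1))^{1/(\gamma-1)}\in(0,1)$ then produces constants $b,K>0$ depending only on $\alpha_0,\alpha_1,\gamma$, which is exactly the paper's computation. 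So your diagnosis of the trap (that $p'=0$ is too crude) is right, but the claimed fix is backwards: the superlinearity is exploited by comparing $G$ at $p$ with $G$ at a \emph{less coercive} point $tp$ with $0<t<1$, not at a more coercive dilated point.
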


\begin{proof}
For $(x,p)\in\R^d\times\R^d$, $t\in(0,1)$, and $q\in\partial_pG(x,p)$, we have by convexity
\[
G(x,tp)\geq G(x,p)+\du{q, tp-p}=G(x,p)-(1-t)\du{q,p},
\]
which reads
\[
(1-t)(\du{q,p}-G(x,p))\geq tG(x,p)-G(x,tp).
\]
By {\bf (G1)}, we obtain
\[\bald
tG(x,p)-G(x,tp)&\geq t(\ga|p|^\gamma-1/\ga)-\gb(t^\gamma |p|^\gamma+1)
\\& =(\ga t-\gb t^\gamma)|p|^\gamma-t/\ga -\gb.
\eald
\]
Thus,
\[
(1-t)(\du{q,p}-G(x,p))\geq (\ga t-\gb t^\gamma)|p|^\gamma-t/\ga-\gb.
\]
We choose $\tau>0$ so that
\[
2\gb \tau^\gamma=\ga \tau,
\] 
which can be solved as
\[
\tau=\left(\fr{\ga}{2\gb}\right)^{\fr{1}{\gamma-1}}. 
\]
Noting that $0<\tau<1$ (notice that $\gb\geq \ga$), 
we find that
\[
\du{q,p}-G(x,p)\geq \fr{\ga\tau}{2(1-\tau)}|p|^\gamma - \fr{\tau+\ga\gb}{\ga(1-\tau)}. 
\]
Setting 
\[
b=\fr{\ga\tau}{2(1-\tau)} \ \ \AND \ \ K=\fr{\tau+\ga\gb}{\ga(1-\tau)}.
\]
we have
\[
\du{q,p}-G(x,p)\geq b|p|^\gamma-K\quad\hbox{for all  $x,p\in\R^d$} \ \AND\ q\in\partial_pG(x,p). \qedhere
\]
\end{proof}

\begin{lemma}\label{lemm convex Barles condition}
Let $G:\R^d\times\R^d\to\R$ be a convex Hamiltonian satisfying assumption \eqref{eq Barles equivalent} for some constants  $b,K>0$ and $\gamma>1$.
Then $G$ satisfies condition  {\bf (BLT)}, with same constants  $b,K>0$ and $\gamma>1$. 
\end{lemma}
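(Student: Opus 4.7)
The plan is to read off the conclusion directly from the subgradient inequality for the convex function $p\mapsto G(x,p)$, then plug in the hypothesis \eqref{eq Barles equivalent} pointwise. Since $G(x,\cdot)$ is a finite convex function on $\R^d$, its subdifferential $\partial_p G(x,p)$ is nonempty at every $p$, so I may pick any $q\in\partial_p G(x,p)$.

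Fix $(x,p)\in\R^d\times\R^d$ and $\mu\in(0,1)$. By the convexity of $G(x,\cdot)$ and the defining inequality of the subgradient applied at $p$ with increment $p/\mu-p=\frac{1-\mu}{\mu}\,p$, I obtain
\[
G(x,p/\mu)\;\geq\; G(x,p)+\Big\langle q,\,\tfrac{1-\mu}{\mu}\,p\Big\rangle
\;=\; G(x,p)+\tfrac{1-\mu}{\mu}\,\langle q,p\rangle.
\]
Multiplying by $\mu>0$ and regrouping gives
\[
\mu G(x,p/\mu)-G(x,p)\;\geq\;(\mu-1)G(x,p)+(1-\mu)\langle q,p\rangle
\;=\;(1-\mu)\bigl(\langle q,p\rangle-G(x,p)\bigr).
\]

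Now I invoke hypothesis \eqref{eq Barles equivalent}, which asserts precisely that $\langle q,p\rangle-G(x,p)\geq b|p|^\gamma-K$. Combining with the previous display and using $1-\mu>0$ yields
\[
\mu G(x,p/\mu)-G(x,p)\;\geq\;(1-\mu)\bigl(b|p|^\gamma-K\bigr),
\]
which is exactly condition \textbf{(BLT)} with the same constants $b$, $K$, $\gamma$. The argument uses only the subgradient inequality and the assumed lower bound, so there is no real obstacle: the only point worth flagging is the nonemptiness of $\partial_p G(x,p)$, which is automatic for a finite convex function on $\R^d$, so \eqref{eq Barles equivalent} can indeed be applied at every $(x,p)$.
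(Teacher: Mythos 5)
Your proof is correct and follows essentially the same route as the paper's: apply the subgradient inequality with increment $p/\mu-p$, multiply through by $\mu$, regroup, and invoke \eqref{eq Barles equivalent} on the factor $\langle q,p\rangle-G(x,p)$. The only addition is your explicit remark on the nonemptiness of $\partial_p G(x,p)$, which the paper leaves implicit but which is indeed automatic for a finite convex function on $\R^d$.
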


\begin{proof}
Let us fix $(x,p)\in\TR$. 
By convexity, for every fixed $q\in\partial_p G(x,p)$ we have 
\[
G(x,\hat p) \geqslant G(x,p)+\langle q,\hat p-p\rangle
\qquad
\hbox{for all $\hat p\in\R^d$.}
\]
Fix $\mu\in (0,1)$ and  plug $\hat p:=p/\mu$ in the above inequality. We have 
\[
G(x,p/\mu)\geqslant G(x,p)+\langle q,\frac{1-\mu}{\mu} p),
\]
yielding 
\[
\mu G(x,p/\mu) 
\geqslant 
\mu G(x,p)+(1-\mu)\langle q,p\rangle 
\underset{\eqref{eq Barles equivalent}}{\geqslant} 
G(p)+(1-\mu)(b|p|^\gamma-K).\qedhere
\]
\end{proof}

We proceed by showing existence, uniqueness and the Lipschitz character of the solutions of the discounted nonlocal HJ equation associated with a Hamiltonian $G$ in $\Hamper$. 

\begin{theorem}\label{thm discounted functional}
Let $G\in\Hamper$. For every $\lambda>0$, there exists a unique viscosity solution $\hat u_\lambda$ of the following nonlocal HJ equation
\beq\label{eq discounted functional}
\lambda u-\II u +G(x,Du)=0\qquad\hbox{in $\M$.}
\eeq
Furthermore, the family of solutions $\{\hat u_\lambda\,\mid\,\lambda>0\}$ is equi-Lipschitz. 
\end{theorem}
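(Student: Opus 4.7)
The strategy is to reduce the statement to the comparison principle and Lipschitz regularity estimates established by Barles, Ley and Topp in \cite{BLT17}, which cover Hamiltonians satisfying conditions {\bf (G2)}, {\bf (G3)} and {\bf (BLT)}. Since any $G\in\Hamper$ obeys {\bf (G2)} and {\bf (G3)} by definition, the essential verification is {\bf (BLT)}, and this is precisely the content of Lemmas~\ref{lemma growth convex} and~\ref{lemm convex Barles condition}: convexity of $G$ in $p$ together with the two-sided growth {\bf (G1)} yields the dissipation inequality~\eqref{eq Barles equivalent}, which in turn implies {\bf (BLT)} with constants $b,K$ depending only on $\alpha_0,\alpha_1,\gamma$---independent of both $x$ and $\lambda$. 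This uniform dependence will be crucial for the equi-Lipschitz conclusion.

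With {\bf (BLT)} at hand, the comparison principle of \cite{BLT17} yields uniqueness of bounded continuous solutions of~\eqref{eq discounted functional} for every $\lambda>0$. For existence, a direct application of Theorem~\ref{lem.ma14} is obstructed because {\bf (G3)} allows an $(|p|^\gamma+1)$-modulus of $x$-continuity, whereas {\bf (H1)} demands the weaker $(1+|p|)$ modulus. I would bypass this by a truncation argument: fix the a priori Lipschitz constant $\kappa$ furnished by the regularity estimate of \cite{BLT17}, and define a modified Hamiltonian $\widetilde G\in\C(\TM)$ that coincides with $G$ on $\M\times\overline B_\kappa$ but grows at most linearly in $p$ outside $B_{\kappa+1}$ (for instance by interpolating $G$ with a radial cutoff at the level $\kappa+1$). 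Then $\widetilde G$ satisfies {\bf (H1)}, so Theorem~\ref{lem.ma14} provides a unique continuous solution $\hat u_\lambda$ of the truncated equation. The BLT estimate applies equally well to $\widetilde G$ (the truncation is inactive in the relevant gradient regime), giving that $\hat u_\lambda$ is $\kappa$-Lipschitz; hence $\widetilde G$ and $G$ agree along its gradient, and a routine verification based on Theorem~\ref{thm sub equivalent} shows that $\hat u_\lambda$ in fact solves the original equation~\eqref{eq discounted functional}.

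The equi-Lipschitz property of the family $\{\hat u_\lambda\mid\lambda>0\}$ then follows at once from the observation that the Lipschitz bound $\kappa$ produced by \cite{BLT17} depends only on structural data---the constants in {\bf (BLT)}, {\bf (G2)}, {\bf (G3)} and in {\bf (N)}, {\bf (J1)}, {\bf (J2)}---none of which involve $\lambda$. The elementary estimate $\|\lambda\hat u_\lambda\|_\infty\leq\max\{\alpha_1,1/\alpha_0\}$, obtained by comparison with constant sub- and supersolutions determined by {\bf (G1)}, enters only to control $L^\infty$-norms and is itself uniform in $\lambda$.

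The principal technical delicacy is extracting from \cite{BLT17} a Lipschitz constant \emph{genuinely} uniform in $\lambda>0$. Their proof rests on a doubling-variables estimate in which the gradient terms are absorbed via the coercivity afforded by {\bf (BLT)}; since {\bf (BLT)} is structural and does not involve $\lambda$, this absorption should be lossless in $\lambda$, but verifying that all auxiliary constants behave as expected requires careful bookkeeping rather than a black-box invocation.
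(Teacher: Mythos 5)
Your overall framing is right, and you correctly emphasize that Lemmas~\ref{lemma growth convex} and~\ref{lemm convex Barles condition} are the new content: they show that convexity plus~{\bf (G1)} force~{\bf (BLT)} with $\lambda$-independent constants, which is what licenses the use of \cite{BLT17}. You also correctly observe that Theorem~\ref{lem.ma14} cannot be invoked directly for a $G\in\Hamper$, since {\bf (G3)} allows a modulus proportional to $|p|^\gamma$ whereas {\bf (H1)} requires linear growth in $|p|$ — a genuine obstruction worth noting.

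Where you diverge from the paper is in the existence step. The paper simply cites \cite[Proposition~2.6]{BLT17} for existence and uniqueness, and \cite[Theorem~3.1 and Lemma~4.2]{BLT17} for the equi-Lipschitz bound; there is no truncation and no appeal to Theorem~\ref{lem.ma14}. Your truncation workaround is an attempt to make the argument self-contained in the present paper, but as written it has a gap: once $G$ is replaced by a Hamiltonian $\widetilde G$ that grows at most linearly outside $B_{\kappa+1}$, condition~{\bf (BLT)} — the source of the coercivity that drives the Lipschitz estimate in \cite{BLT17} — is destroyed at large $|p|$. You assert that ``the BLT estimate applies equally well to $\widetilde G$ because the truncation is inactive in the relevant gradient regime,'' but the a priori estimate of \cite[Lemma~4.2]{BLT17} does not know in advance that the gradient of the $\widetilde G$-solution stays in $B_{\kappa}$: that is precisely what the estimate is supposed to prove. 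The argument is circular unless you supply an independent gradient bound for the truncated equation (e.g., by an explicit barrier/comparison argument), and you do not. The cleaner route — the one the paper takes — is to accept \cite{BLT17} as a black box once {\bf (BLT)}, {\bf (G2)}, {\bf (G3)} are verified, and to reserve the localization machinery ($H_L$, the compact set $Q$, etc.) for Section~\ref{sec.asymptotic}, where it is genuinely needed to place oneself in the duality framework of Section~\ref{sec.duality}.
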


\bproof
The existence and uniqueness of the solution to \eqref{eq discounted functional} follows from \cite[Proposition 2.6]{BLT17}. 
The equi-Lipschitz character of the solutions $\hat u_\lambda$ follows from Theorem 3.1 and Lemma 4.2 in \cite{BLT17}. 
\eproof

The proof of Lemma 4.2 in \cite{BLT17} is based on an inequality, see (4.4) in \cite{BLT17}, that is stated without proof. We provide it here for the reader's convenience. 

\begin{lemma}\label{lemma app Barles claim}
Let $G\in\C(\TR)$ be a Hamiltonian satisfying condition  {\bf (BLT)}. For every $\delta>0$, 
there exists $k(\delta,\gamma,b)\in\N$ such that,  for every $(x,p)\in\TR$ with $|p|\geqslant \delta$, the following inequality 
holds
\begin{equation}\label{claim Barles}
G(x,tp)-tG(x,p)
\geqslant 
\eta |t|^\gamma |p|^\gamma
\qquad
\hbox{for all $t=2^k$ with $k\geqslant k(\delta,\gamma,b)$,}
\end{equation}
where $\eta=b/2^{\gamma+1}$. 
\end{lemma}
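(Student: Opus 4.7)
The plan is to deduce \eqref{claim Barles} by iterating assumption {\bf (BLT)} with the choice $\mu = 1/2$. A single application of {\bf (BLT)} with $\mu = 1/t$ (and $t=2^k$) would yield only linear growth in $t$ on the right-hand side, whereas we need growth of order $t^\gamma$; since $\gamma>1$, iteration is indispensable.

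The first step is to specialize {\bf (BLT)} to $\mu = 1/2$ and to substitute $p \mapsto 2^j p$, obtaining the recursive inequality
\[
G(x, 2^{j+1} p) - 2\, G(x, 2^j p) \;\geq\; b \cdot 2^{j\gamma} |p|^\gamma - K \qquad (j \in \N).
\]
Multiplying the $j$-th inequality by $2^{k-1-j}$ and summing over $j = 0, \dots, k-1$ telescopes to
\[
G(x, 2^k p) - 2^k G(x, p) \;\geq\; b|p|^\gamma \sum_{j=0}^{k-1} 2^{k-1-j+j\gamma} - K(2^k - 1).
\]
Evaluating the geometric sum produces a leading contribution of the form $\frac{b|p|^\gamma}{2^\gamma - 2}\, 2^{k\gamma}$, together with correction terms of size $O(2^k)$ only.

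Next, I would compare this leading coefficient with the target $\eta = b/2^{\gamma+1}$. The direct calculation
\[
\frac{1}{2^\gamma - 2} - \frac{1}{2^{\gamma+1}} \;=\; \frac{2^\gamma + 2}{(2^\gamma - 2)\,2^{\gamma+1}} \;>\; 0
\]
shows that the leading term of the lower bound exceeds $\eta \cdot 2^{k\gamma}|p|^\gamma$ by a definite positive multiple of $2^{k\gamma}|p|^\gamma$. It then suffices to absorb the remainder, which is of order $(|p|^\gamma + 1)\, 2^k$, into this excess. Using $|p|^\gamma \geq \delta^\gamma > 0$ together with $\gamma > 1$, the excess grows like $\delta^\gamma\, 2^{k\gamma}$ while the correction grows only like $2^k$; hence one can choose a threshold $k(\delta, \gamma, b)$ (in fact also depending on $K$, but $K$ is determined by the pair $b,\gamma$ up to constants in our applications) above which the desired inequality \eqref{claim Barles} holds uniformly in $x$ and in $p$ with $|p|\geq\delta$.

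The main obstacle is purely one of careful bookkeeping: extracting the precise coefficient $1/(2^\gamma - 2)$ from the geometric sum, verifying that it strictly beats $1/2^{\gamma+1}$, and then calibrating the threshold $k(\delta,\gamma,b)$ against $\delta$ so that the polynomial-in-$2^k$ main term swamps the linear-in-$2^k$ correction. No analytic tool beyond the iteration of {\bf (BLT)} itself is required.
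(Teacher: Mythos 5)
Your proof is correct and follows essentially the same route as the paper: both iterate \textbf{(BLT)} at $\mu=1/2$ with $p$ rescaled by powers of $2$, then absorb the $O(2^k)$ error using $|p|\geq\delta$ and $\gamma>1$. The only distinction is that you sum the full geometric series (obtaining the sharper leading coefficient $b/(2^\gamma-2)$), whereas the paper discards all but the last-step contribution and settles for $b/2^\gamma$; both exceed $\eta=b/2^{\gamma+1}$, so the calibration of $k(\delta,\gamma,b)$ proceeds identically. Your parenthetical observation that the threshold in fact also depends on $K$ is accurate and applies equally to the paper's argument.
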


\begin{proof}
Let us fix $x,p\in\R^d$ with $|p|\geqslant \delta$.
By setting $\mu:=1/t$ with $t>1$ in  {\bf (BLT)}, we get 
\begin{equation}\label{eq Barles condition2}
G(x,tp)\geqslant tG(x,p) \geqslant (t-1)(b|p|^\gamma-K)\qquad\hbox{for all $t>1$.}
\end{equation}
Let us plug $t:=2^k$ in \eqref{eq Barles condition2} with $k\in\N$. To ease notation, we will neglect the dependence of $G$ on $x$ since the latter will remain fixed throughout the proof. 
\begin{itemize}
\item {$\mathbf{t=2}$.} \quad $G(2p)\geqslant 2G(p)+b|p|^\gamma-K$.\smallskip
\item {$\mathbf{t=2^2}$.} By making use of the inequality just obtained for $t=2$, we get 
\begin{eqnarray*}
G(4p)
&\geqslant& 
2G(2p)+b2^\gamma|p|^\gamma-K\\
&\geqslant& 
2(2G(p)+{b|p|^\gamma-K}+(b2^\gamma |p|^\gamma-K)\\
&\geqslant& 
4G(p)+b2^\gamma |p|^\gamma-(2^2-1)K.\smallskip
\end{eqnarray*}
\item {$\mathbf{t=2^3}$.} By making use of the inequality just obtained for $t=2^2$, we get 
\begin{eqnarray*}
G(8p)
&\geqslant& 
2G(4p)+b(2^2)^\gamma|p|^\gamma-K\\
&\geqslant& 
2(4G(p)+b2^\gamma|p|^\gamma-(2^2-1)K)-K+b(2^2)^\gamma |p|^\gamma\\
&\geqslant& 
2^3G(p)+b2^\gamma |p|^\gamma-(2^3-1)K.\smallskip
\end{eqnarray*}
\item {$\mathbf{t=2^k}$.} By iteration, we get 
\begin{eqnarray*}
G(2^kp)
&\geqslant& 
2^kG(p)+b(2^{k-1})^\gamma|p|^\gamma-(2^k-1)K.
\end{eqnarray*}
\end{itemize}
In order to obtained the asserted inequality \eqref{claim Barles}, we need to control the dangerous term $(2^k-1)K$ which appears in the last inequality. To this aim, we notice that   
\begin{eqnarray*}
b(2^{k-1})^\gamma|p|^\gamma
\geqslant
\frac{b}{2^{\gamma+1}}(2^{k})^\gamma|p|^\gamma
+
\frac{b}{2^{\gamma+1}}(2^{k})^\gamma|\delta|^\gamma
\qquad
\hbox{for all $|p|\geqslant \delta$,}
\end{eqnarray*}
hence 
\begin{eqnarray*}
G(2^kp)
\geqslant 
2^kG(p)
+
\frac{b}{2^{\gamma+1}}(2^{k})^\gamma|p|^\gamma
+
2^k\underbrace{\left(\frac{b|\delta|^\gamma}{2^{\gamma+1}}(2^{k})^{\gamma-1}-K\right)}_{D}.
\end{eqnarray*}
By choosing $k(\delta,\gamma,b)$ big enough so that $D\geqslant 0$, we get the assertion. 
\end{proof}

Next, we analyze the solvability of the nonlocal critical HJ equation associated with a  $G$ belonging to $\Hamper$. 
The real constant $c$ appeared below is known as {\em ergodic} or {\em critical} constant. 

\begin{theorem}\label{thm critical functional}
Let $G\in\Hamper$. There exists a unique constant $c$ in $\R$ such that the following nonlocal HJ equation
\beq\label{eq functional critical}
-\II u +G(x,Du)=c\qquad\hbox{in $\M$}
\eeq
admits solutions. Furthermore, the family of solutions to \eqref{eq functional critical} is equi-Lipschitz. 
\end{theorem}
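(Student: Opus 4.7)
The plan is to apply the classical ergodic approximation of Lions--Papanicolaou--Varadhan in this nonlocal setting. Let $\hat u_\lambda$ denote the unique discounted solution provided by Theorem \ref{thm discounted functional}; this family is equi-Lipschitz with some constant $\kappa$, and {\bf (G1)} forces $\lambda\|\hat u_\lambda\|_\infty\leq C_0:=\max(\alpha_1,1/\alpha_0)$ via comparison with the constants $\pm C_0/\lambda$, the comparison principle for \eqref{eq discounted functional} with $G\in\Hamper$ being available from \cite{BLT17}. Fix $x_0\in\M$ and extract $\lambda_n\to 0^+$ such that $-\lambda_n\hat u_{\lambda_n}(x_0)\to c\in\R$. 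By the Lipschitz bound,
\[
\bigl|\lambda_n \hat u_{\lambda_n}(x)-\lambda_n\hat u_{\lambda_n}(x_0)\bigr|\leq \lambda_n\kappa\sqrt d\longrightarrow 0,
\]
so $-\lambda_n\hat u_{\lambda_n}\to c$ uniformly on $\M$.

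The normalized functions $v_n:=\hat u_{\lambda_n}-\hat u_{\lambda_n}(x_0)$ are equi-Lipschitz and uniformly bounded by the compactness of $\M$; by Arzel\`a--Ascoli a further subsequence converges uniformly to a $\kappa$-Lipschitz function $v$. Since each $v_n$ satisfies, in the viscosity sense,
\[
\lambda_n v_n + \lambda_n\hat u_{\lambda_n}(x_0) - \II v_n + G(x,Dv_n) = 0 \ \ \IN \M,
\]
standard stability of viscosity solutions (valid in this nonlocal framework through Theorems \ref{thm sub equivalent} and \ref{thm super equivalent}) then yields that $v$ solves \eqref{eq functional critical} with that value of $c$.

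For uniqueness of $c$, I would adapt the argument of Proposition \ref{prop LPV}: if two distinct critical constants $c_1<c_2$ both admitted solutions $u_1,u_2$, I would pick $\eps>0$ with $c_1+\eps<c_2-\eps$, shift $u_1$ upward so as to pointwise dominate $u_2$, and then choose $\lambda>0$ small enough that the shifted $u_1$ and $u_2$ become, respectively, a sub- and a supersolution of the $\lambda$-discounted equation with strictly ordered right-hand sides, contradicting comparison. Finally, the equi-Lipschitz character of the whole family of solutions to \eqref{eq functional critical} would come from the Lipschitz estimates of Theorem 3.1 and Lemma 4.2 in \cite{BLT17}, whose proofs depend only on {\bf (G1)}--{\bf (G3)} and on the Barles-type condition {\bf (BLT)} (verified here via Lemmas \ref{lemma growth convex} and \ref{lemm convex Barles condition}), and so apply unchanged to the critical equation.

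The main obstacle is the mismatch between {\bf (G3)} and {\bf (H1)}: the modulus of continuity of $G$ in $x$ degrades like $|p|^\gamma$, so Proposition \ref{prop comparison} cannot be invoked directly. The fix is either to borrow the comparison principle of \cite{BLT17} or, equivalently, to mimic the truncation preceding Lemma \ref{lem.ma31} and replace $G$ outside $\M\times\overline B_\kappa$ by a Hamiltonian $\widetilde G$ satisfying {\bf (H1)}, a substitution that does not affect the set of $\kappa$-Lipschitz sub- and supersolutions in play.
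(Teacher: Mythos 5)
Your proposal is correct and essentially reconstructs, from scratch, the Lions--Papanicolaou--Varadhan ergodic approximation that \cite{BLT17} uses to prove their Proposition~4.1. The paper's own proof is a two-line citation: existence and uniqueness of $c$ are taken directly from \cite[Proposition 4.1]{BLT17}, and the equi-Lipschitz character of the critical solutions from Theorem~3.1 and Lemma~4.2 there (the latter citation you make as well). So the two arguments are not really distinct routes: you are unpacking the black box that the paper keeps closed. That said, your reconstruction is sound, and you identify the one genuine technical subtlety --- that $G\in\Hamper$ satisfies only {\bf (G3)}, not {\bf (H1)}, so Proposition~\ref{prop comparison} cannot be invoked directly --- and offer the right fixes (borrow comparison from \cite{BLT17}, or truncate $G$ outside $\M\times\overline B_\kappa$ to a Hamiltonian satisfying {\bf (H1)}, as done before Lemma~\ref{lem.ma31}; note that truncation alone, e.g.\ $\max\{G,|p|^2-k\}$, does not yet give {\bf (H1)} --- one must pass all the way to $H_L$ defined from the Lagrangian restricted to $\M\times Q$, as the paper does). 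Your computation of $\lambda\|\hat u_\lambda\|_\infty\le\max(\alpha_1,1/\alpha_0)$ from {\bf (G1)} and constant sub/supersolutions is correct, the normalization and Arzel\`a--Ascoli step are standard, and stability of viscosity sub/supersolutions under uniform convergence does hold here (the compactness of $\M$ and uniform convergence control the nonlocal term). The main respect in which the paper's proof is preferable is economy: it defers to \cite{BLT17} rather than duplicating its argument. Your version is more self-contained, which is a legitimate pedagogical tradeoff.
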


\bproof
The existence and uniqueness of such a real constant $c$ follows from \cite[Proposition 4.1]{BLT17}. The equi-Lipschitz character of the solutions $u$ to \eqref{eq functional critical} follows from Theorem 3.1 and Lemma 4.2 in \cite{BLT17}. 
\eproof

By gathering the information obtained above, we derive the following fact. 

\begin{proposition}\label{prop application}
Let $G\in\Hamper$ and denote by $c\in\R$ the associated critical constant. Then conditions {\bf (EX)} and {\bf (LB)} hold for the Hamiltonian $H$ defined as $H(x,p):=G(x,p)-c$ for all $(x,p)\in\TM$. 
\end{proposition}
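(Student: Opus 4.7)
The strategy is to reduce the statement to direct applications of Theorems \ref{thm discounted functional} and \ref{thm critical functional} after verifying that the shifted Hamiltonian $H = G - c$ again belongs to the class $\Hamper$. Since the two theorems already give existence, uniqueness, and equi-Lipschitz estimates for every Hamiltonian in $\Hamper$, both conditions \textbf{(EX)} and \textbf{(LB)} should then fall out with minimal extra work.

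The first step is to verify that $H \in \Hamper$, for possibly different constants $\alpha_0', \alpha_1' > 0$ and the same exponent $\gamma > 1$. Convexity in $p$ is preserved by subtracting a constant, and the conditions \textbf{(G2)}, \textbf{(G3)} transfer immediately since they involve only differences of $H$ evaluated at different momenta or base points. The growth condition \textbf{(G1)} requires a small amount of bookkeeping: one chooses $\alpha_0' \in (0, \alpha_0]$ small enough and $\alpha_1' \geq \alpha_1$ large enough so that
\[
\alpha_0' |p|^\gamma - 1/\alpha_0' \;\leq\; G(x,p) - c \;\leq\; \alpha_1'(|p|^\gamma + 1) \qquad \text{for all } (x,p)\in\TR,
\]
which is possible since the constant $c$ is absorbed into the additive terms. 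The $\Z^d$-periodicity in $x$ is trivially preserved.

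For condition \textbf{(EX)}, I would apply Theorem \ref{thm critical functional} to $H$ to produce a unique $c_H \in \R$ such that $-\II u + H(x,Du) = c_H$ admits solutions; rewriting this equation as $-\II u + G(x,Du) = c + c_H$ and invoking the uniqueness of $c$ as the critical constant of $G$ (Theorem \ref{thm critical functional} applied to $G$) forces $c_H = 0$. Hence the critical equation $-\II u + H(x,Du) = 0$ admits solutions, which handles $\lambda = 0$. For $\lambda \in (0,1)$, Theorem \ref{thm discounted functional} applied to $H$ directly provides a unique viscosity solution to $\lambda u - \II u + H(x,Du) = 0$.

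For condition \textbf{(LB)}, the same two theorems, applied now to $H \in \Hamper$, yield equi-Lipschitz constants $\kappa_1$ for the family $\{u_\lambda \mid \lambda > 0\}$ of discounted solutions and $\kappa_0$ for the family of critical solutions of $-\II u + H(x,Du) = 0$. Setting $\kappa := \max\{\kappa_0, \kappa_1\}$ gives the uniform bound required by \textbf{(LB)}, since any continuous solution of \eqref{eq31} with $\lambda \in [0,1)$ falls into one of these two families. The main (and only genuine) obstacle is the verification in the first step that $H \in \Hamper$ with clean constants; once that is in place, the proposition follows by invoking the cited theorems in sequence.
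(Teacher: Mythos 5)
Your proof is correct, but it takes a different route from the paper's. The paper never checks that $H = G - c \in \Hamper$; instead, it observes that for $\lambda > 0$ the map $u \mapsto u + c/\lambda$ is a bijection between solutions of the $G$-discounted equation \eqref{eq discounted functional} and solutions of the $H$-discounted equation \eqref{eq31} (since adding constants changes neither $Du$ nor $\II u$), and that for $\lambda = 0$ the $H$-critical equation $-\II u + H(x,Du) = 0$ is literally the same equation as $-\II u + G(x,Du) = c$, i.e.\ the defining equation of the critical constant. Existence, uniqueness, and the equi-Lipschitz bound then follow directly from Theorems \ref{thm discounted functional} and \ref{thm critical functional} applied to $G$, with no new estimates. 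Your approach instead shows that $\Hamper$ is stable under subtraction of constants (which is true and verified correctly: for the lower bound in \textbf{(G1)} one may need to shrink $\alpha_0$ when $c > 0$, and for the upper bound one may need to enlarge $\alpha_1$ when $c < 0$) and then reapplies the theorems to $H$ itself, with an extra step to identify $c_H = 0$ via the uniqueness of $c$. Both are valid; the paper's argument is shorter and requires no bookkeeping on the structural constants, while yours has the marginal advantage of recording explicitly that $\Hamper$ is closed under additive normalization, which could be of independent interest.
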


\bproof
For every fixed $\lambda>0$, the function $\hat u_\lambda+c/\lambda$ solves the equation \eqref{eq31}. 
Conversely, any solution $u_\lambda$ to \eqref{eq31} is such that $u_\lambda-c/\lambda$ solves the equation 
\eqref{eq discounted functional}, hence it is unique by Theorem \ref{thm critical functional} and $u_\lambda=\hat u_\lambda+c/\lambda$ in $\M$. 
The assertion follows as a direct consequence of Theorems \ref{thm discounted functional} and \ref{thm critical functional}. 
\eproof

\begin{appendix}

\section{Some technical lemmas}

In this section we collect some technical lemmas that we employ. We begin with the following. 

\begin{lemma}\label{approx} Let $v\in\Bd(\R^d)\cap\USC(\R^d)$. 
Then there is a sequence $\{v_k\mid k\in\N\}\subset \C_{\mrm{b}}^\infty(\R^d)$ such that 
\[
v_k(x)\geq v_{k+1}(x) \ \ \FOR (x,k)\in\R^d\tim\N \ \ \AND \ \ \lim_k v_k(x)=v(x) \ \ \FOR x\in\R^d.
\]
\end{lemma}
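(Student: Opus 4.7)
My plan is to construct $v_k$ by combining a Lipschitz envelope approximation of $v$ with a carefully tuned mollification, inserting a vanishing additive buffer to ensure that the decreasing property survives regularization.

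First, I would define the Lipschitz envelopes $\tilde v_k(x):=\sup_{y\in\R^d}\{v(y)-k|y-x|\}$ for $k\in\N$. A standard check shows that each $\tilde v_k$ is $k$-Lipschitz, satisfies $v\leq \tilde v_k\leq \|v\|_\infty$, is decreasing in $k$, and converges to $v$ pointwise: for fixed $x$ and $\eps>0$, upper semicontinuity produces $\delta>0$ with $v(y)<v(x)+\eps$ on $|y-x|<\delta$, and the penalty $-k|y-x|$ eliminates the far-field contribution once $k\delta\geq 2\|v\|_\infty$. Since the decreasing property would be destroyed by direct mollification, I would introduce a buffer by setting $\psi_k:=\tilde v_k+1/k$, which inherits $k$-Lipschitz regularity and boundedness, still converges to $v$, and now satisfies the quantitative gap $\psi_k-\psi_{k+1}\geq 1/(k(k+1))$.

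Next, I would regularize. Fix a standard mollifier $\rho\in \C^\infty_c(\R^d)$ with $\rho\geq 0$, $\int\rho=1$, $\supp\rho\subset B_1$, and define $v_k:=\psi_k\ast\rho_{\eps_k}$ with $\eps_k:=1/(10k^3)$. Each $v_k$ lies in $\C_{\mathrm{b}}^\infty(\R^d)$ because $\psi_k$ is bounded, and the $k$-Lipschitz property gives $\|v_k-\psi_k\|_\infty\leq k\eps_k=1/(10k^2)$. Hence
\[
v_k-v_{k+1}\geq (\psi_k-\psi_{k+1})-\|v_k-\psi_k\|_\infty-\|v_{k+1}-\psi_{k+1}\|_\infty\geq \frac{1}{k(k+1)}-\frac{1}{10k^2}-\frac{1}{10(k+1)^2}\geq 0,
\]
the last inequality being an elementary check (it suffices to bound $1/(k(k+1))\geq 1/(k+1)^2$ and then compare with $1/(10k^2)+1/(10(k+1)^2)$). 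Pointwise convergence $v_k\to v$ follows at once from $|v_k-v|\leq \|v_k-\psi_k\|_\infty+|\psi_k-v|\to 0$.

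The main obstacle, as signalled above, is reconciling smoothness with the monotonicity requirement, since raw mollification of a decreasing Lipschitz sequence generally breaks monotonicity. The key device is to trade a small additive buffer $1/k$, which produces a controllable gap of order $1/k^2$ between consecutive $\psi_k$'s, against a strictly smaller mollification scale $\eps_k\ll 1/k^3$ so that the buffer comfortably dominates the regularization error on both sides.
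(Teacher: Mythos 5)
Your proof is correct and follows essentially the same three-step strategy as the paper's: approximate $v$ from above by a decreasing sequence of Lipschitz functions converging pointwise to $v$, insert a vanishing additive buffer so that consecutive terms are separated by a quantitative gap, and then mollify at a scale small enough that the regularization error is dominated by that gap. The only substantive variation is that you use the linear (Pasch--Hausdorff/Lipschitz) envelope $\sup_y\{v(y)-k|y-x|\}$, whereas the paper uses the quadratic sup-convolution $v^{\eps_k}$ and cites Proposition~\ref{properties sup convolution}; your choice makes the Lipschitz constant $k$ explicit, so the mollification error bound $k\eps_k$ is available in closed form rather than by an unspecified choice of $\delta_k$, making the argument marginally more self-contained, but the mechanism is the same.
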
 

\bproof  Consider the sup-convolution $v^\eps$ of $v$ defined according to \eqref{def sup convolution}.  
Fix a decreasing sequence $\{\ep_k\mid k\in\N\}\subset (0,+\infty)$ converging to zero. 
Set 
\[
w_k(x)=v^{\ep_k}(x)+2^{-k} \ \ \FOR (x,k)\in \R^d\tim\N.
\]
In view of Proposition \ref{properties sup convolution} we have 
\beq\label{eq1.0}
\lim_k w_k(x)=v(x) \ \ \FOR x\in\R^d \ \ \AND \ \ w_k(x)>w_{k+1}(x) \ \ \FOR (x,k)\in \R^d\tim\N.
\eeq
Furthermore, note that 
\beq\label{eq1.1}
w_k(x)-w_{k+1}(x)\geq 2^{-k}-2^{-k-1}=2^{-k-1} \ \ \FOR (x,k)\in\R^d\tim\N.
\eeq
Let $\rho_\gd$ be a standard mollification kernel, with $\supp \rho_\gd\subset B_\gd$. 
Since 
\[
\lim_{\gd\to 0^+}w_k*\rho_\gd =w_k \ \ \text{ uniformly in }\R^d,
\]
we may choose $\gd_k>0$ so that
\beq\label{eq1.2}
\|w_k*\rho_{\gd_k}-w_k\|_\infty \leq 2^{-k-2}. 
\eeq
We set 
\[
v_k=w_k*\rho_{\gd_k}. 
\]
Observe that $v_k\in \C_{\mrm{b}}^\infty(\R^d)$ for every $k\in\N$, and that, for all $(x,k)\in\R^d\tim\N$,
\[\bald
v_{k}(x)-v_{k+1}(x) &
\underset{\eqr{eq1.2}}{\geq}		
w_{k}(x)-2^{-k-2}-w_{k+1}(x)-2^{-k-3}\\
& \underset{\eqr{eq1.1}}{>}	w_k(x)-w_{k+1}(x)-2^{-k-1}\geq 0.
\eald
\]
Furthermore, from  \eqr{eq1.0} and \eqr{eq1.2} we get 
\[
\lim_k v_k(x)=v(x)\qquad\FORALL x\in\R^d. \qedhere
\] 
\eproof 

Next, we state a sort of global version of Jensen's lemma, see \cite[Lemma A.3]{users}.

\begin{proposition} \label{max1}
Let $u$ be a bounded and semiconvex function on $\R^d$ and $\varphi
\in \C_\mathrm{b}^2(\R^d)$ such that $u-\varphi$ has a strict maximum point at $\hx$. 
Let $R>r>0$ be fixed. 
Then there is a sequence $(\varphi_k,x_k)\in \C^2(\R^d)\tim \R^d$ such that 
for all $k\in\N$, 
\[\left\{\,\bald
&x_k\in B_r(\hx) \ \ \FOR k\in\N,\\
&\varphi_k(x)=\varphi(x) \ \ \FOR x\in\R^d\stm B_R(\hx), \\
&\max\limits_{\Rd}(u-\varphi_k)=(u-\varphi_k)(x_k) \ \ \FOR k\in\N,\\
&u \text{ is twice differentiable at } x_k,\\
\eald \right.
\]
and $\varphi_k \to \varphi \ \IN \C^2(\R^d)$ as $k\to +\infty$.
\end{proposition}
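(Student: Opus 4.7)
The plan is to combine the classical (local) Jensen's lemma \cite[Lemma~A.3]{users} with a cut-off argument that converts the Jensen-type maximum into a global maximum of a perturbation $\varphi_k$ of $\varphi$ that agrees with $\varphi$ outside $B_R(\hat x)$. Without loss of generality $\hat x=0$, and the first step is to establish a gap condition: to fix $s\in(r,R)$ and $\delta_0>0$ such that
\[
\sup_{\R^d\setminus B_s(0)}(u-\varphi)\le(u-\varphi)(0)-\delta_0.
\]
Since $u\in\Bd(\R^d)$ and $\varphi\in\C_{\mathrm{b}}^2(\R^d)$ are merely bounded, this gap is not automatic from a pointwise strict-maximum hypothesis; if needed one reduces to it by replacing $\varphi$ with $\varphi+\zeta$ for some $\zeta\in\C_{\mathrm{b}}^2(\R^d)$ vanishing on a neighborhood of $0$ and bounded below by a positive constant on $\R^d\setminus B_s(0)$ (for instance a smoothed radial bump). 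The auxiliary $\zeta$ can be absorbed into $\varphi_k$ at the end via the cutoff $\chi$.

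Next, I apply Jensen's lemma to the semiconvex function $v:=u-\varphi$ on the open ball $B_s(0)$, where $0$ is a strict maximum of $v$. For every $\delta>0$ sufficiently small, the set
\[
K_\delta:=\Bigl\{x\in B_s(0)\,:\,\exists\,p\in B_\delta(0)\ \text{with}\ v(y)+\langle p,y\rangle\le v(x)+\langle p,x\rangle\ \forall y\in\overline{B_s(0)}\Bigr\}
\]
has positive Lebesgue measure. Since $v$ is twice differentiable almost everywhere by Aleksandrov's Theorem, for each $k\in\N$ I select $p_k\in B_{1/k}(0)$ and $x_k\in K_{1/k}$ at which $u$ (equivalently $v$) is twice differentiable. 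Testing the extremality condition at $y=0$ yields $v(x_k)\ge v(0)-|p_k||x_k|$, and strict maximality of $0$ together with $p_k\to 0$ forces $x_k\to 0$; in particular $x_k\in B_r(0)$ for every $k$ large enough.

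The construction is completed by choosing a cutoff $\chi\in\C_{\mathrm{b}}^\infty(\R^d)$ with $0\le\chi\le 1$, $\chi\equiv 1$ on $B_s(0)$, $\supp\chi\subset B_R(0)$, and setting
\[
\varphi_k(x):=\varphi(x)-\chi(x)\langle p_k,x\rangle.
\]
Then $\varphi_k\equiv\varphi$ on $\R^d\setminus B_R(0)$, and $\varphi_k\to\varphi$ in $\C^2(\R^d)$ because $p_k\to 0$ and $\chi$ is compactly supported. On $\overline{B_s(0)}$ the cutoff is identically $1$, so $u-\varphi_k=v+\langle p_k,\cdot\rangle$ and the Jensen condition gives $(u-\varphi_k)(x_k)\ge(u-\varphi_k)(y)$ for every $y\in\overline{B_s(0)}$. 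On $\R^d\setminus B_s(0)$ the gap condition together with the bound $|\chi\langle p_k,\cdot\rangle|\le R|p_k|$ yields
\[
u-\varphi_k\le (u-\varphi)(0)-\delta_0+R|p_k|,
\]
whereas $(u-\varphi_k)(x_k)=v(x_k)+\langle p_k,x_k\rangle\to(u-\varphi)(0)$ as $k\to\infty$. Hence for all $k$ sufficiently large, $x_k$ realizes the global maximum of $u-\varphi_k$ on $\R^d$, and all four items in the statement hold simultaneously.

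The main obstacle is the gap condition. A merely pointwise strict maximum does not exclude that $\sup_{\R^d\setminus B_s(0)}(u-\varphi)=(u-\varphi)(0)$ due to the behavior of $u-\varphi$ at infinity, where boundedness alone is available; the reduction via $\zeta$ handles this, and one must check that incorporating $\chi\zeta$ into $\varphi_k$ preserves both the $\C^2$-convergence to $\varphi$ and the identity $\varphi_k\equiv\varphi$ outside $B_R(\hat x)$.
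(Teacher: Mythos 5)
Your core construction — apply Jensen's lemma on a small ball around $\hat x$ to produce $p_k\to 0$ and twice-differentiability points $x_k$ of $u$ where $(u-\varphi)+\langle p_k,\cdot\rangle$ is maximal, then take $\varphi_k:=\varphi-\chi\langle p_k,\cdot\rangle$ with a cutoff $\chi$ supported in $B_R(\hat x)$ — is exactly the paper's. The paper's $\varphi_k=(1-\chi)\varphi+\chi\psi_k$ with $\psi_k:=\varphi-\langle p_k,\cdot\rangle+\Phi_k(x_k)$ only differs from yours by the additive constant $\chi\cdot\Phi_k(x_k)$, which tends to zero.

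There is, however, a genuine gap where you discuss the ``gap condition.'' You try to establish
\[
\sup_{\R^d\setminus B_s(\hat x)}(u-\varphi)\le (u-\varphi)(\hat x)-\delta_0
\]
and correctly note that a pointwise strict maximum does not give this (the set is noncompact and $u,\varphi$ are only bounded). But your proposed fix via an auxiliary $\zeta$ does not close the gap: to preserve $\varphi_k\equiv\varphi$ outside $B_R(\hat x)$ and $\varphi_k\to\varphi$ in $\C^2$, the only allowable correction is $\chi\zeta$, which vanishes outside $B_R(\hat x)$ — so it creates no gap on $\R^d\setminus B_R(\hat x)$, which is exactly where you invoked it. The fix would need to alter $\varphi$ outside $B_R(\hat x)$, contradicting the required identity.

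The resolution is simpler and avoids the issue entirely: normalize so that $(u-\varphi)(\hat x)=0$; the strict \emph{global} maximum then gives $u\le\varphi$ on all of $\R^d$. The perturbation $-\chi\langle p_k,\cdot\rangle$ is supported in $B_R(\hat x)$, so on $\R^d\setminus B_R(\hat x)$ one has $u-\varphi_k=u-\varphi\le 0$, while the Jensen extremality inequality tested at $y=\hat x$ (which you already wrote down) gives $(u-\varphi_k)(x_k)=\Phi_k(x_k)\ge\Phi_k(\hat x)=0$. Thus no gap at infinity is needed at all. The $\delta_0$-gap is only required where $\chi$ is nonzero but not identically $1$, i.e., on the \emph{compact} annulus $\overline{B}_R(\hat x)\setminus B_s(\hat x)$, where it follows automatically from continuity of $u-\varphi$, compactness, and the strict maximum. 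Splitting the comparison into the three regions $\overline{B}_s(\hat x)$ (Jensen), $\overline{B}_R(\hat x)\setminus B_s(\hat x)$ (automatic gap on a compact set), and $\R^d\setminus B_R(\hat x)$ (global maximum plus $\Phi_k(x_k)\ge 0$) makes the argument complete, and is precisely what the paper does.
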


\bproof  By translation, we may assume $\hx=0$.  We may also assume that 
$(u-\varphi)(0)=0$. By assumption, there exists $\gd>0$ such that
\beq\label{strict}
\max_{\ol{B}_R\stm B_{r}}(u-\varphi)<-\gd. 
\eeq
Choose a function $\chi\in \C^2(\R^d)$ such that 
\[\bald
&0\leq \chi(x)\leq 1 \ \ \FOR x\in\R^d,
\\&
\chi(x)=\bcases 0 &\FOR x\in \R^d\stm B_R,\\[3pt]
1 \ \ &\FOR x\in B_r. 
\ecases
\eald
\]
Since the function $u$ is semiconvex, the argument employed in \cite[Lemma A.3]{users} to prove Jensen's lemma  actually shows that there exist sequences $(x_k)_k\subset B_r\,$ and $(p_k)_k\subset \R^{d}$
with 
\beq\label{max0}
\lim_{k\to+\infty}x_k=\lim_{k\to+\infty}p_k=0
\eeq
such that the function 
$\Phi_k(x):=(u-\varphi)(x)+\langle p_k, x\rangle$ is twice differentiable at $x_k$ and 
\beq\label{max2+1}
\Phi_k(x_k)=\max_{\ol B_r}\Phi_k.
\eeq
Notice that this means that $u$ is twice differentiable at $x_k$. 
By setting 
\[
\psi_k(x):=\varphi(x)-\langle p_k, x\rangle +\Phi_k(x_k),
\]
we have 
\[
u(x)\leq \psi_k(x) \ \ \FORALL x\in \ol B_r,
\quad
u(x_k)=\psi_k(x_k).
\]
Note that 
\[\bald
&\lim_{k\to +\infty}\Phi_k(x_k) = (u-\varphi)(0)=0,
\\& \lim_{k\to +\infty}\psi_k=\varphi \ \ \IN \C^2(\R^d).
\eald
\]
Thanks to \eqref{strict} and the fact that 
\[
\lim_{k\to+\infty}(u-\psi_k)=(u-\varphi) \ \ \IN \C(\R^d),
\]
we may assume, by possibly passing to a subsequence (not relabeled), that, 
for every $k\in\N$, 
\[
u\leq \psi_k \ \ \FORALL x\in\ol B_R\stm B_r,
\]
yielding 
\[
u\leq \psi_k \ \ \FORALL x\in B_R. 
\] 
To resume, we have
\[\bald
&u(x_k)=\psi_k(x_k) \ \ \FORALL k\in\N,\\
&u\leq \psi_k \ \ \FORALL x\in B_R\ \AND\  k\in\N,\\
&u\leq \varphi \ \ \ \FORALL x\in\R^d\ \AND\ k\in\N.
\eald
\] 
We define $\varphi_k\in \C_\mathrm{b}^2(\R^d)$ as \  $\varphi_k=(1-\chi)\varphi+\chi \psi_k$.
It is easily seen that 
\[\bald
&u(x_k)=\varphi_k(x_k)  && \FORALL k\in\N,\\
&u\leq \varphi_k &&\FORALL x\in\R^d\ \AND\ k\in\N,\\
&\varphi_k(x)=\varphi(x)  &&\FORALL x\in\R^d\stm B_R\ \AND\ k\in\N.
\eald
\] 
Since 
\[
\lim_{k\to+\infty}\psi_k=\varphi \ \ \ \IN \C^2(\R^d),
\]
we easily deduce that 
\[
\lim_{k\to +\infty}\varphi_k=\varphi \ \ \ \IN \C^2(\R^d).\qedhere
\]
\eproof

\section{Approximation of subsolutions via sup-convolutions}\label{app.sup convolution}

Let us consider the following nonlocal HJ equation 
\begin{equation}\label{eq0}
\gl u-\II u+H (x,Du)=0 \ \ \ \IN \T^d \tag{HJ$^\lambda_\M$},
\end{equation}
where $\lambda\geq 0$, and $H\in\C(\TM)$ satisfies {\bf (H1)} and is additionally assumed convex in $p$, i.e. it satisfies {\bf (H2)}.
%
%
The nonlocal differential operator $\II$ is of the form \eqref{def I}  and conditions {\bf (N), (J1), (J2)} are in force with $M:=\M$. 

In this section we are interested in showing that any continuous subsolution $v$ of \eqref{eq0} can be approximated, in the sup norm, by a family of semiconvex functions that are subsolutions to \eqref{eq0}, up to a small error. 
To this aim, we recall the definition of sup-convolution of an upper semicontinuous and bounded function  and its main properties.  

\begin{proposition}\label{properties sup convolution}
Let $v\in\Bd(\R^d)\cap\USC(\R^d)$. For $\eps>0$, let
\beq\label{def sup convolution}
v^\ep(x):=\sup_{y\in\R^d} \left(v(y)-\fr{1}{2\ep}|x-y|^2\right) \ \ \FOR x\in\R^d
\eeq
be the sup-convolution of $v$. 
The following holds:
\begin{itemize}
\item[\em (i)]\quad $v^{\ep}(x)\geq v^{\gd}(x)\geq v(x) \ \ \IF \ep>\gd>0$ \quad and \quad
$\lim_{\ep\to 0^+}v^\ep(x)=v(x)$ \quad for all $x\in\R^d$;\smallskip
\item[\em (ii)] \quad $v^\eps$ is $1/\eps$-semiconvex in $\R^d$;\smallskip
\item[\em (iii)] \quad $v^\eps$ is bounded and Lipschitz in $\R^d$. 
\end{itemize}
\end{proposition}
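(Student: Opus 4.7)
The plan is to prove the three items in order, exploiting the sup-convolution definition essentially by elementary manipulations, with upper semicontinuity of $v$ entering only at the pointwise convergence part of (i).

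For (i), the chain of inequalities $v^\ep(x)\ge v^\gd(x)\ge v(x)$ when $\ep>\gd>0$ is immediate: the penalty $\tfrac{1}{2\e}|x-y|^2$ is decreasing in $\e$ for each fixed $x\ne y$, and the candidate $y=x$ shows the defining sup is at least $v(x)$. For the pointwise limit as $\e\to 0^+$, I would first observe that, since $v$ is bounded upper semicontinuous and the penalty is coercive in $y$, the supremum defining $v^\e(x)$ is attained at some $y_\e\in\R^d$; comparing with the admissible point $y=x$ yields the quantitative estimate $\tfrac{1}{2\e}|x-y_\e|^2\le v(y_\e)-v(x)\le 2\|v\|_\infty$, so $y_\e\to x$ as $\e\to 0^+$. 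Upper semicontinuity of $v$ then gives $\limsup_{\e\to 0^+} v^\e(x)\le\limsup_{\e\to 0^+} v(y_\e)\le v(x)$, which combined with monotonicity closes (i).

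For (ii), the standard trick is to complete the square: writing
\[
v^\e(x)+\tfrac{1}{2\e}|x|^2 \;=\; \sup_{y\in\R^d}\Big( v(y)+\tfrac{1}{\e}\du{x,y}-\tfrac{1}{2\e}|y|^2\Big)
\]
realizes the left-hand side as a supremum of affine functions of $x$, hence convex; this is precisely the $(1/\e)$-semiconvexity of $v^\e$ in the sense defined in Section~\ref{sec.notation}.

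For (iii), boundedness is immediate from $v(x)\le v^\e(x)\le\sup v\le\|v\|_\infty$. For the Lipschitz estimate, I would fix $x,x'\in\R^d$ and, using a maximizer $y_\e=y_\e(x')$ for $v^\e(x')$, plug the admissible choice $y=y_\e$ into the sup defining $v^\e(x)$; this produces $v^\e(x)-v^\e(x')\ge -\tfrac{1}{2\e}(|x-y_\e|^2-|x'-y_\e|^2)$, and symmetrically. The bound $|x'-y_\e|\le 2\sqrt{\e\|v\|_\infty}$ from the argument in (i), together with $|x-y_\e|\le |x-x'|+|x'-y_\e|$, reduces the right-hand side to an expression controlled by a constant times $|x-x'|$ on the regime $|x-x'|\le 1$; patching this with the trivial bound $|v^\e(x)-v^\e(x')|\le 2\|v\|_\infty\le 2\|v\|_\infty|x-x'|$ when $|x-x'|\ge 1$ yields global Lipschitzness, with constant depending on $\e$ and $\|v\|_\infty$. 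I do not expect any serious obstacle: the only subtle point is the invocation of upper semicontinuity in (i) to secure a maximizer and to pass to the limit; everything else is purely algebraic.
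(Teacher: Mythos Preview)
The paper states this proposition without proof, as these are standard properties of the sup-convolution; there is no argument in the paper against which to compare. Your proof is correct in all three parts: the monotonicity and the use of upper semicontinuity to secure a maximizer and pass to the limit in (i), the completion-of-the-square trick in (ii), and the two-regime Lipschitz estimate in (iii) are all valid and standard.
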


The precise statement that we are going to prove is the following.

\begin{proposition}\label{prop sup convolution} Assume {\bf (N),\,(J1),\,(J2)} and {\bf (H1)-(H2)}. Let 
$v\in \C(\T^d)$ be a subsolution to  
\eqref{eq0} for some $\lambda\geq 0$  and  
\[
v^\ep(x):=\max_{y\in\R^d}\Big(v(y)-\fr{1}{2\ep}|x-y|^2\Big),\qquad x\in\R^d,
\]
be the sup-convolution of $v$. Then, for every $\delta>0$, there exists 
$\eps(\delta) >0$ such that the function $v^{\eps(\delta)}$ is a 
Lipschitz subsolution to 
\begin{equation}\label{eq eps}
\gl u-\II u+H (x,Du)\leq \delta \ \ \ \IN \T^d,
\end{equation}
satisfying \ \ $\|v-v^{\eps(\delta)}\|_\infty<\delta$. 
\end{proposition}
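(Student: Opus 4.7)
The strategy is twofold: (i) reduce the viscosity subsolution inequality for $v^\eps$ to an a.e.\ pointwise inequality using the semiconvexity of the sup-convolution, and (ii) transfer the known subsolution property of $v$ at a maximizer $y^*(x)$ down to the base point $x$ via a translation identity. By Proposition \ref{properties sup convolution}, $v^\eps$ is bounded, Lipschitz, $1/\eps$-semiconvex, and $v^\eps\downarrow v$ pointwise on $\R^d$ as $\eps\to 0^+$. Since $v\in\C(\M)$, this monotone convergence is uniform on the compact torus $\M$ by Dini's theorem, so $\|v-v^{\eps(\delta)}\|_\infty<\delta$ for all sufficiently small $\eps$. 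For each $x\in\M$, pick a maximizer $y^*=y^*(x)$ so that $v^\eps(x)=v(y^*)-\tfrac{1}{2\eps}|x-y^*|^2$. Comparing with $v^\eps(x)\ge v(x)$ and using a uniform continuity modulus $\omega_v$ of $v$ yields $|x-y^*|^2/(2\eps)\le\omega_v(|x-y^*|)$, hence $|x-y^*|^2=o(\eps)$ uniformly in $x$. Since $|Dv^\eps(x)|\le|x-y^*|/\eps$ at points of differentiability, one concludes that $|x-y^*|(1+|Dv^\eps(x)|)=o(1)$ uniformly in $x$.

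Because $v^\eps$ is bounded and semiconvex, Proposition \ref{prop pointwise subsol} reduces the claim to verifying the pointwise inequality $\lambda v^\eps(x)-\II v^\eps(x)+H(x,Dv^\eps(x))\le\delta$ at each $x$ of twice differentiability of $v^\eps$, which is a full-measure subset of $\M$ by Aleksandrov's theorem. Fix such an $x$ and set $y^*=y^*(x)$. Exploiting the twice differentiability of $v^\eps$ at $x$, I build a bounded $\C^2$ upper envelope $\chi$ of $v^\eps$ with $\chi(x)=v^\eps(x)$, $D\chi(x)=Dv^\eps(x)$, and $D^2\chi(x)=D^2v^\eps(x)+\sigma I$ (a local paraboloid cut off to a constant away from $x$). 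The identity $v(y^*+h)-v(y^*)\le v^\eps(x+h)-v^\eps(x)$ for all $h\in\R^d$, immediate from the definition of $y^*$, guarantees that $\psi(y):=\chi(y-(y^*-x))+\tfrac{1}{2\eps}|x-y^*|^2$ belongs to $\C^2_b(\M)$ and touches $v$ from above at $y^*$. Applying Theorem \ref{thm sub equivalent}(iii) to the subsolution $v$ at $y^*$ with test function $\psi$ yields
\[
\lambda v(y^*)-\II(v,Dv^\eps(x),y^*)+H(y^*,Dv^\eps(x))\le 0;
\]
combined with $v(y^*)\ge v^\eps(x)$, assumption (H1), and the estimate of Step~1, it remains to show that $\II(v,Dv^\eps(x),y^*)\le \II v^\eps(x)+o(1)$ uniformly in $x$.

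Applying the translation identity inside the integrand bounds $\II(v,Dv^\eps(x),y^*)$ by the integral $B$ obtained from $\II v^\eps(x)$ by replacing $j(x,z)$ with $j(y^*,z)$ in the integrand. The discrepancy $B-\II v^\eps(x)$ is handled by splitting $\R^d=B_{\delta'}\cup(B_R\setminus B_{\delta'})\cup(\R^d\setminus B_R)$: on $\R^d\setminus B_R$ the indicator $\1_B$ vanishes and (J2) together with the Lipschitz bound $L_\eps$ of $v^\eps$ yields a contribution of order $L_\eps|x-y^*|=o(1)$; on the middle annulus (J1), the Lipschitz bound, and $\nu(B_R\setminus B_{\delta'})<\infty$ produce another $o(1)$; on $B_{\delta'}$ one invokes the second-order Taylor expansion of $v^\eps$ at $x$ together with the algebraic identity $|j(y^*,z)\otimes j(y^*,z)-j(x,z)\otimes j(x,z)|\le 2C_j^2|x-y^*|\,|z|^2$ and the integrability $\int_{B_{\delta'}}|z|^2\,\nu(dz)\to 0$ as $\delta'\to 0^+$ from (N).

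The technical heart of the argument lies precisely in this small-jump estimate on $B_{\delta'}$: the Taylor remainder produces a term of order $\|D^2v^\eps(x)\|\cdot|x-y^*|\cdot\int_{B_{\delta'}}|z|^2\,\nu(dz)$, and the Hessian of $v^\eps$ is only bounded from below by $-\eps^{-1}I$ via semiconvexity, with no uniform upper bound available a priori. Making this contribution uniformly small in $x\in\M$ requires choosing $\delta'=\delta'(\eps)$ and $\eps$ in a coordinated way, absorbing the Hessian prefactor via the refined bound $|x-y^*|^2=o(\eps)$ from Step~1 together with an upper bound on the positive part of $D^2 v^\eps(x)$ obtained from the Lipschitz and boundedness properties of $v^\eps$.
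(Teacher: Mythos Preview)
Your outline contains the right ingredients but leaves a genuine gap in the small-jump region that, as you yourself flag, you do not close. After applying the translation identity $v(y^*+h)-v(y^*)\le v^\eps(x+h)-v^\eps(x)$ with $h=j(y^*,z)$, you must still pass from $j(y^*,z)$ to $j(x,z)$ inside the $v^\eps$-integral. On $B_{\delta'}$ this forces a second-order Taylor expansion of $v^\eps$ at $x$, producing the factor $\|D^2v^\eps(x)\|$. Your proposed remedy---bounding the positive part of $D^2v^\eps(x)$ via ``Lipschitz and boundedness properties''---is not valid: a bounded, Lipschitz, semiconvex function need not have its Aleksandrov Hessian bounded from above uniformly in $x$ (semiconvexity only gives $D^2v^\eps\ge -\eps^{-1}I$). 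Since $\delta'$ and $\eps$ must be chosen independently of $x$, the term $\|D^2v^\eps(x)\|\cdot|x-y^*|\cdot\int_{B_{\delta'}}|z|^2\,\nu(dz)$ cannot be made uniformly small by this route.

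The paper sidesteps this entirely by \emph{not} splitting the comparison into two steps. Working at the viscosity level with a test function $\varphi$ touching $v^\eps$ from above at $\hat x$ (rather than via Proposition~\ref{prop pointwise subsol}), it exploits that $(x,y)\mapsto v(y)-\tfrac{1}{2\eps}|x-y|^2-\varphi(x)$ has a maximum at $(\hat x,\hat y)$. Evaluating at $(\hat x+j(\hat x,z),\,\hat y+j(\hat y,z))$ yields in one stroke
\[
v(\hat y+j(\hat y,z))-v(\hat y)\le v^\eps(\hat x+j(\hat x,z))-v^\eps(\hat x)
+\tfrac{1}{\eps}\langle \hat x-\hat y,\,j(\hat x,z)-j(\hat y,z)\rangle
+\tfrac{1}{2\eps}|j(\hat x,z)-j(\hat y,z)|^2.
\]
The only second-order correction is the explicit quadratic $\tfrac{1}{2\eps}|j(\hat x,z)-j(\hat y,z)|^2\le \tfrac{C_j^2}{2}\,|z|^2\cdot\tfrac{|\hat x-\hat y|^2}{\eps}$, which is integrable against $\nu$ on $B$ by {\bf (N)} and small because $|\hat x-\hat y|^2/\eps\le 2\,\omega_v(2\sqrt{\eps\|v\|_\infty})\to 0$. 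No Hessian of $v^\eps$ ever appears. The remaining pieces of $\II(v,D\varphi(\hat x),\hat y)-\II(v^\eps,D\varphi(\hat x),\hat x)$ on $B_R\setminus B$ and $\R^d\setminus B_R$ are then handled by {\bf (N)}, {\bf (J1)}, and a trivial $\|v\|_\infty$ bound. The moral: use the doubling-of-variables inequality with \emph{both} jumps simultaneously rather than translating first and correcting the jump argument afterwards.
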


\bproof 
The fact that $v^\eps$ is $\Z^d$--periodic and Lipschitz continuous, for every $\eps>0$, is easily checked.  
Let $\varphi\in \C_{b}^2(\R^d)$ be such that 
$(v^\ep-\varphi)(\hx)=\max(v^\ep-\varphi)$ at some $\hx\in\R^d$. Choose $\hy\in\R^d$ so that 
\[
v^\ep(\hx)=v (\hy)-\fr{1}{2\ep}|\hx-\hy|^2. 
\]
Since $v(\hx)\leq v^\ep(\hx)$, it follows that 
\[
\fr{1}{2\ep}|\hx-\hy|^2\leq v(\hy)-v(\hx)\leq 2\|v\|_\infty,
\]
i.e.
\[
|\hx-\hy|\leq 2\sqrt{\ep\|v\|_\infty}. 
\]
Denoting by $\omega_v$ a continuity modulus for $v$, we derive 
\beq\label{eq optimal y}
\fr{1}{2\ep}|\hx-\hy|^2
\leq 
v(\hy)-v(\hx)
\leq 
\omega_v(|\hx-\hy|)
\leq
\omega_v\big( 2 \sqrt{\ep\|v\|_\infty}\big).
\eeq
Set $\psi(i)=|\xi|^2/{2\ep}$ for $\xi\in\R^d$. Note that 
\[
(x,y)\mapsto v(y)-\psi(x-y)-\varphi(x)
\]
takes a maximum at $(\hx,\hy)$.  It follows that
\beq\label{eq3.1}
D\varphi(\hx)=-D\psi(\hx-\hy)=\fr{1}{\ep}(\hy-\hx). 
\eeq
We choose $\gth\in \C^2(\R)$ so that 
\[
\gth(r)=r \ \ \ \FOR r\leq 1,\quad \gth(r) =2 \ \ \ \FOR r\geq 3, \quad\AND\quad \gth''(r)\leq 0 \ \ \ 
\FOR r\in\R.
\] Note that $0\leq \gth'(r) \leq 1$ and $\gth(r)\leq r$ for $r\in \R$.
For $M>0$, set $\gth_M(r)=M\gth(r/M)$ for $r\in\R$. Also, set $\psi_M(\xi)=\gth_M\circ \psi(\xi)$ 
for $\xi\in\R^d$. Note that 
\[
\psi_M(\xi)=\fr{1}{2\ep}|\xi|^2 \ \ \ \IF |\xi|\leq\sqrt{2\ep M},
\]
and 
\[\bald
&0\leq \psi_M(\xi)\leq 2M,\quad
D\psi_M(\xi)=\gth_M'\circ\psi(\xi)D\psi(\xi),
\\& D^2\psi_M(\xi)
=\gth_M''\circ\psi(\xi)D\psi(\xi)\otimes D\psi(\xi)
+\gth_M'\circ \psi(\xi)D^2\psi(\xi). 
\eald
\]
In particular, $\psi_M\in C_{\mrm{b}}^2(\R^d)$. 
Choosing $M>0$ large enough, we keep the property that 
\[
(x,y)\mapsto v (y)-\psi_M(x-y)-\varphi(x) \ \ \text{ takes a local maximum at }(\hx,\hy), 
\] 
as well as \ 
$\psi_M(\xi)=\psi(\xi)$ 
\ near $\xi=\hx-\hy$.
This last property, together with \eqr{eq3.1}, yields
\beq\label{eq3.2}
D\varphi(\hx)=D\psi_M(\hy-\hx)=\fr{\hy-\hx}{\ep}. 
\eeq
Since $\psi_M\in \C_{{b}}^2(\R^d)$, in view of Theorem \ref{thm sub equivalent} 
the subsolution property of $v$ reads as 
\[
\gl v (\hy)+H(\hy,D\varphi(\hx))\leq \II(v ,D\varphi(\hx),\hy). 
\]
By taking into account the fact that $v^\eps(\hx)\leq v(\hy)$, \eqref{eq3.1} and assumption {\bf (H1)}, we derive 
\beq\label{eq3.3}
\gl v^\eps (\hx)+H(\hx,D\varphi(\hx))\leq \II(v ,D\varphi(\hx),\hy)
+\omega_H\big(|\hx-\hy|(1+{|\hx-\hy|}/{\eps})\big).
\eeq
Observe that 
\begin{flalign*}
v (\hy+j(\hy,z))&-v (\hy)
=\Big( 
v (\hy+j(\hy,z))-\psi\big(\hx+j(\hx,z)
-\hy-j(\hy,z)\big)
\Big)
-
\Big(
v (\hy)-\psi(\hx-\hy)
\Big)
\\
&\qquad\qquad\quad
+\psi\big(\hx+j(\hx,z)-\hy-j(\hy,z)\big) -\psi(\hx-\hy)
\\
&\leq 
v^\ep(\hx+j(\hx,z))-v^\ep(\hx)
+\psi\big(\hx+j(\hx,z)-\hy-j(\hy,z)\big) -\psi(\hx-\hy)
\\
&=
v^\ep\big(\hx+j(\hx,z)\big)-v^\ep(\hx)
+\fr{1}{\ep}
\lan \hx-\hy, j(\hx,z)-j(\hy,z)\ran +\fr{1}{2\ep}|j(\hx,z)-j(\hy,z)|^2. 
\end{flalign*}
Let us fix $R>1$. Setting $E_1:=B$, $E_2:=B_R\setminus B$, $E_3:=\R^d\setminus B_R$, we have  
\begin{flalign*}
\II(v,D\varphi(\hx),\hy)
=
\sum_{i=1}^3 \int_{E_i} 
\Big(v (\hy+j(\hy,z))-v (\hy)-\1_B(z)\lan D\varphi(\hx),j(\hy,z)\ran \Big)
\nu(dz)
=:I_1+I_2+I_3
\end{flalign*}
For $I_3$, we have the following estimate:
\[\bald
I_3 
&=
\int_{\R^d\setminus B_R}
\Big(v^\ep(\hx+j(\hx,z))-v^\ep(\hx) \Big)\, \nu(dz) 
\\
&\quad 
+\int_{\R^d\setminus B_R}
\Big(
v (\hy+j(\hy,z))-v (\hy)-v^\ep(\hx+j(\hx,z))+v^\ep(\hx)
\Big)\,
\nu(dz)
\\
&\leq 
\int_{\R^d\setminus B_R}
\Big(
v^\ep(\hx+j(\hx,z))-v^\ep(\hx)
\Big)
\,\nu(dz)
+
4\|v\|_\infty\, \nu(\R^d\setminus B_R).
\eald
\]
For the integral $I_2$, we have:
\[\bald
I_2
&=
\int_{B_R\stm B}
\Big(
v (\hy+j(\hy,z))-v (\hy)
\Big)\,
\nu(dz)
\\
&\leq 
\int_{B_R\stm B}
\Big(
v^\ep(\hx+j(\hx,z))-v^\ep(\hx)
\Big)\,
\nu(dz)
\\
&\quad+
\int_{B_R\stm B}
\Big(
\fr{1}{\ep}\lan \hx-\hy, j(\hx,z)-j(\hy,z)\ran +\fr{1}{2\ep}|j(\hx,z)-j(\hy,z)|^2
\Big)
\,\nu(dz)
\\
&\leq 
\int_{B_R\stm B} \Big(v^\ep(\hx+j(\hx,z))-v^\ep(\hx)\Big)\,\nu(dz)
+
RC_jC_\nu\left(1+RC_j\right) \frac{|\hx-\hy|^2}{\eps} \ \ \ \text{by {\bf (N),\,(J1)}}.
\eald
\]
Last, the integral $I_1$ can be estimated as follows:
\[\bald 
I_1
&=
\int_B
\Big(v (\hy+j(\hy,z))-v (\hy)-\lan \underbrace{D\varphi(\hx)}_{(\hy-\hx)/\eps},j(\hy,z)\ran \Big)
\nu(dz)
\\&
\leq 
\int_B
\Big(
v^\ep(\hx+j(\hx,z)) -v^\ep(\hx) 
+\fr{1}{\ep}\lan \hx-\hy, j(\hx,z)-j(\hy,z)\ran +\fr{1}{2\ep}|j(\hx,z)-j(\hy,z)|^2
\\
&\quad +
\fr{1}{\ep}
\lan\hx-\hy,j(\hy,z)\ran
\Big)\,
\nu(dz)
\\
&=
\int_B
\Big(
v^\ep(\hx+j(\hx,z)) -v^\ep(\hx) 
+
\fr{1}{\ep}
\lan \hx-\hy,  j(\hx,z)\ran +\fr{1}{2\ep}|j(\hx,z)-j(\hy,z)|^2
\Big)\,
 \nu(dz)
\\
&\leq 
\int_B
\Big(
v^\ep(\hx+j(\hx,z)) -v^\ep(\hx) 
-\lan D\varphi(\hx), j(\hx,z)\ran
\Big)
\,\nu(dz) +
{C_\nu C_j^2}\frac{|\hx-\hy|^2}{2\ep}   \ \ \ \text{by {\bf (N),\,(J1)}}.
\eald
\]
In view of \eqref{eq optimal y}, we infer that there exists a positive constant 
$C(R)=C(C_j,C_\nu,R)$ such that 
\beq\label{eq sup convolution estimate}
\bald
\II (v,D\varphi(\hx),\hy)
&\leq  
\int_{\R^d}(v^\ep(\hx+j(\hx,z))-v^\ep(\hx)-\1_B(z)\lan D\varphi(\hx), j(\hx,z)\ran)\nu(dz)
\\&
+4\|v\|_\infty \nu(\R^d\setminus B_R)+C(R)\,\omega_v\big( 2 \sqrt{\ep\|v\|_\infty}\big)
\\&
\leq 
\II(v^\ep,D\varphi(\hx),\hx) +4\|v\|_\infty \nu(\R^d\setminus B_R)
+
C(R)\,\omega_v\big( 2 \sqrt{\ep\|v\|_\infty}\big). 
\eald
\eeq
Let us fix $\delta>0$. In view of assumption {\bf (N)}, we first choose $R>1$ big enough such that 
\[
4\|v\|_\infty\, \nu(\R^d\setminus B_R)<\delta/2.
\]
Next, we choose $\eps=\eps(\delta)>0$ small enough so that $\|v-v^\eps\|_\infty<\delta$,
\[
|\hx-\hy|(1+{|\hx-\hy|}/{\eps})
\leq
\sqrt{\eps}+2\omega_v\big(2 \sqrt{\ep\|v\|_\infty}\big)
\qquad
\hbox{in view of  \eqref{eq optimal y}},
\]
and 
\[
\omega_H\left(\sqrt{\eps}+2\omega_v\big(2 \sqrt{\ep\|v\|_\infty}\big)\right)
+
C(R)\,\omega_v\big( 2 \sqrt{\ep\|v\|_\infty}\big)
<
\frac{\delta}{2}.
\]
By going back to the inequality  \eqref{eq3.3} with this choice of $\eps>0$, we finally get, in view of  
\eqref{eq sup convolution estimate}
\[
\gl v^\eps (\hx)+H(\hx,D\varphi(\hx))\leq \II(v^\eps,D\varphi(\hx),\hx)
+\delta
\qquad
\IN \M.
\]
This proves the assertion, in view of Theorem \ref{thm sub equivalent}.  
\eproof

\section{Smoothing out subsolutions}\label{app.smooth subsolutions}

This section is aimed at showing that a Lipschitz and semiconvex subsolution of 
an equation of the form 
\beq\label{eq HJ a}
\gl u-\II u+H (x,Du)=a\qquad \IN \M,
\eeq
with $\lambda\geq 0$, $a\in\R$ and $H$ satisfying  {\bf (H2)}, can be approximated through a family of smooth functions that are subsolutions to \eqref{eq HJ a}, up to a small error. The nonlocal operator $\II$ is of the form \eqref{def I}, where we still assume conditions {\bf (N),\,(J1)}, but we need to reinforce condition  {\bf (J2)} in favor of  the following:
\begin{itemize}
\item[\bf (J3)] There is a constant $C_0>0$ such that 
\[
\int_{\R^d}|j(x,z)-j(y,z)|\nu(dz)\leq C_0|x-y| \ \ \ \FOR x,y\in\R^d. 
\]
\end{itemize}
The precise statement is the following. 

\begin{proposition}\label{prop convolution} Assume {\bf (N),\,(J1),\,(J3)} and {\bf (H2)}. Let $v\in\Lip(\T^d)$ be a semiconvex subsolution of \eqref{eq HJ a}
for some $\lambda\geq 0$ and $a\in\R$. 
Then, for each $\delta>0$, there exists a subsolution $u_\delta\in \C^\infty(\M)$ of \eqr{eq HJ a} with $a+\delta$ in place of $a$ satisfying  $\|u_\delta-v\|_\infty <\delta$. 
\end{proposition}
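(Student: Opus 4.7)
The natural approach is to regularize $v$ via convolution. Fix a symmetric mollifier $\rho_\eps \in C^\infty_c(\R^d)$ supported in $B_\eps$ with $\int \rho_\eps = 1$, and set $v_\eps := v * \rho_\eps$, identifying $v$ with its $\Z^d$-periodic extension. Then $v_\eps \in \C^\infty(\M)$ with $\|v_\eps - v\|_\infty \leq L\eps$, where $L := \Lip(v)$. By Aleksandrov's theorem and (the periodic analogue of) Proposition \ref{prop pointwise subsol}, $v$ satisfies
\[
\lambda v(\tilde x) - \II v(\tilde x) + H(\tilde x, Dv(\tilde x)) \leq a \qquad \FORALL\ \tilde x\in\M\setminus\Sigma_v,
\]
with $|\Sigma_v|=0$. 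The plan is to show that $v_\eps$ satisfies the corresponding pointwise inequality with $a$ replaced by $a+\delta$ for $\eps$ sufficiently small; since $v_\eps\in\C^\infty$, this immediately gives a viscosity subsolution via Theorem \ref{thm sub equivalent}.

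For the Hamiltonian term, using $Dv_\eps(x) = \int Dv(x-y)\rho_\eps(y)\,dy$, Jensen's inequality (exploiting convexity {\bf (H2)}) together with uniform continuity of $H$ on the compact set $\M \times \overline{B}_L$ yields
\[
H(x, Dv_\eps(x)) \leq \int H(x, Dv(x-y))\rho_\eps(y)\,dy \leq \int H(x-y, Dv(x-y))\rho_\eps(y)\,dy + \omega_H(\eps),
\]
where $\omega_H(\eps) \to 0^+$. The discount term is linear and poses no difficulty. The main obstacle, and the only step requiring the strengthened assumption {\bf (J3)} (rather than {\bf (J2)}), is the near-commutation formula
\[
\II v_\eps(x) \geq \int \rho_\eps(y)\, \II v(x-y)\,dy - 2LC_0 \eps.
\]

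To establish this, I would expand $v_\eps = v * \rho_\eps$ inside the definition of $\II v_\eps(x)$, interchange orders of integration by Fubini, and add and subtract the term with $j(x-y,z)$ in place of $j(x,z)$. The resulting error reads
\begin{align*}
\II v_\eps(x) - \int \rho_\eps(y) \II v(x-y)\,dy
&= \int \rho_\eps(y) \int_{\R^d} \big\{ v(x-y+j(x,z)) - v(x-y+j(x-y,z))\\
&\quad - \1_B(z) \langle j(x,z) - j(x-y,z), Dv(x-y)\rangle\big\}\,\nu(dz)\,dy.
\end{align*}
The Lipschitz bound $|v(a)-v(b)|\leq L|a-b|$ and $|Dv|\leq L$ a.e.\ control the braced integrand by $2L|j(x,z)-j(x-y,z)|$, and {\bf (J3)} yields $\int_{\R^d}|j(x,z)-j(x-y,z)|\,\nu(dz) \leq C_0|y|$, whence the total error is bounded by $2LC_0 \int \rho_\eps(y)|y|\,dy \leq 2LC_0\eps$. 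Under only {\bf (J2)}, the near-origin contribution $\int_B |j(x,z)-j(x-y,z)|\,\nu(dz)$ would not be controlled, which is precisely why the assumption must be strengthened here.

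Combining the three estimates with the pointwise inequality for $v$ integrated against $\rho_\eps$ gives
\[
\lambda v_\eps(x) - \II v_\eps(x) + H(x, Dv_\eps(x)) \leq a + \omega_H(\eps) + 2LC_0\eps
\qquad \FORALL x\in\M.
\]
Choosing $\eps = \eps(\delta) > 0$ small enough that $L\eps < \delta$ and $\omega_H(\eps) + 2LC_0\eps < \delta$ produces the desired smooth subsolution $u_\delta := v_\eps$.
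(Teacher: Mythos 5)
Your proposal follows essentially the same route as the paper's proof: mollify $v$, use Jensen's inequality (via \textbf{(H2)}) together with uniform continuity to handle the Hamiltonian, and use \textbf{(J3)} to commute $\II$ with convolution up to an $O(\eps)$ error. The error estimate $2LC_0\eps$ and the final assembly match the paper's Proposition \ref{prop convolution} precisely.

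There is, however, one genuine gap: the step ``interchange orders of integration by Fubini.'' Before one adds and subtracts the $j(x-y,z)$ term, the map
\[
(y,z)\mapsto \rho_\eps(y)\big(v(x-y+j(x,z))-v(x-y)-\1_B(z)\langle Dv(x-y),j(x,z)\rangle\big)
\]
need not belong to $L^1(dy\otimes\nu(dz))$: for $|z|<1$, the Lipschitz property only bounds this integrand in absolute value by $2LC_j|z|$, and $\int_B|z|\,\nu(dz)$ may diverge under \textbf{(N)} (only $\int_B|z|^2\,\nu(dz)<\infty$ is guaranteed). Thus Fubini does not directly apply, and the identity $\II v_\eps(x)=\int\rho_\eps(y)\big(\int_{\R^d}\cdots\,\nu(dz)\big)dy$ is not automatic. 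The paper resolves this via Tonelli rather than Fubini: exploiting the semiconvexity of $v$, one adds the nonnegative function $g(z)=2\|v\|_\infty\1_{\R^d\setminus B}(z)+C_v|j(x,z)|^2\1_B(z)\in L^1(\nu)$ so that the integrand becomes nonnegative, applies Tonelli, and then subtracts $\int g\,d\nu$ back off. This is the one place (besides Proposition \ref{prop pointwise subsol}) where semiconvexity enters quantitatively, and it is needed before your subtraction trick can be carried out; once the interchange is legitimate, the rest of your argument goes through verbatim.
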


\begin{remark}
We do not know whether the above statement keeps holding when condition {\bf (J3)} is replaced by the weaker condition {\bf (J2)}. 
\end{remark}

By merging Propositions \ref{prop sup convolution} and \ref{prop convolution}, we readily obtain the following result.  

\begin{theorem}\label{thm smooth approximation} 
Assume {\bf (N),\,(J1),\,(J3)} and {\bf (H1)-(H2)}. Let $u\in \C(\T^d)$ be a subsolution to  
\eqref{eq0} for some $\lambda\geq 0$. Then there exist a sequence $(u_n)_n$ 
in $\C^\infty(\M)$ uniformly converging to $u$ on $\M$ and satisfying 
\[
\gl u_n(x)-\II u_n(x)+H (x,Du_n(x))\leq \frac1n\qquad \FORALL x\in\M.
\]
\end{theorem}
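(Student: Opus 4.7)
The plan is to obtain the asserted smoothing by a two-step regularization that first replaces $u$ by a Lipschitz and semiconvex near-subsolution via sup-convolution (Proposition \ref{prop sup convolution}), and then replaces the latter by a genuinely smooth near-subsolution via mollification (Proposition \ref{prop convolution}). Since each step introduces only a controllable error in both the sup-norm and the right-hand side of the equation, choosing both errors of size $1/(2n)$ will deliver the claim with $1/n$ on the right.

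Fix $n\in\N$ and set $\eps:=1/(2n)$. First I would invoke Proposition \ref{prop sup convolution} applied to $u$ with threshold $\eps$, producing a sup-convolution parameter $\tau=\tau(\eps)>0$ such that $v:=u^\tau$ satisfies $\|v-u\|_\infty<\eps$ and is a Lipschitz viscosity subsolution of
\[
\gl v-\II v+H(x,Dv)\leq \eps \qquad \text{in } \M.
\]
By Proposition \ref{properties sup convolution}, $v$ is moreover $1/\tau$-semiconvex, so it fulfills the hypotheses of Proposition \ref{prop convolution} (with $a:=\eps$).

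Next I would apply Proposition \ref{prop convolution} to $v$ with threshold $\eps$ as well, which yields $u_n\in \C^\infty(\M)$ with $\|u_n-v\|_\infty<\eps$ and
\[
\gl u_n(x)-\II u_n(x)+H(x,Du_n(x))\leq \eps+\eps=\frac{1}{n} \qquad \FORALL x\in\M,
\]
the inequality holding pointwise since $u_n$ is smooth. The triangle inequality then gives $\|u_n-u\|_\infty<2\eps=1/n$, so $u_n\to u$ uniformly on $\M$, which is the desired conclusion.

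Nothing genuinely new is required beyond the two propositions already established: the standing assumptions {\bf (N),\,(J1),\,(J3)} together with {\bf (H1)--(H2)} are exactly what is needed to invoke both results simultaneously (note that {\bf (J3)} implies {\bf (J2)}, so the sup-convolution step is covered). The only point to double check is that the near-subsolution $v$ produced in the first step is both Lipschitz and semiconvex, as required by Proposition \ref{prop convolution}, but both are automatic from the standard properties of the sup-convolution recalled in Proposition \ref{properties sup convolution}. The substantive obstacle of the whole program lies inside Proposition \ref{prop convolution} itself, and is the reason why the stronger hypothesis {\bf (J3)} — rather than the weaker {\bf (J2)} sufficient elsewhere — must be imposed here.
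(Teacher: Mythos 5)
Your argument is correct and is exactly what the paper intends: Theorem \ref{thm smooth approximation} is stated with only the remark ``By merging Propositions \ref{prop sup convolution} and \ref{prop convolution}, we readily obtain the following result,'' and your proof spells out that merge, including the necessary observations that {\bf (J3)} implies {\bf (J2)} and that the sup-convolution output is semiconvex (via Proposition \ref{properties sup convolution}) so Proposition \ref{prop convolution} applies.
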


\begin{remark}
Beside allowing us to simplify some proofs, the interest of having such an approximation result relies on the fact that the minimization problems appearing in Theorem \ref{thm.discounted Mather measures} and, primarily, in Theorem \ref{thm.Mather measures 1} can be performed over the class of closed measures, in analogy to what is done in the context of weak KAM Theory for $\gl=0$, see for instance \cite[Theorem 5.7]{DFIZ1}. We will elaborate this further in this section, see in particular Proposition \ref{prop closed measures minimization}. 
\end{remark}

\bproof[Proof of Proposition \ref{prop convolution}]
Up to replacing $H$ with $H-a$, we can assume, without loss of generality, that $a=0$. 
Let $\rho_\eps$ be a standard mollification kernel, with $\supp \rho_\eps \subset B_\eps$.  
Let 
\[
{I }v(x,\xi)=\int_{\R^d}(v(\xi+j(x,z))-v(\xi)-1_B(z)\du{Dv(\xi),j(x,z)})\nu(dz),
\]
defined for  $(x,\xi)\in\R^d\tim\R^d$. A crucial point for the proof of this proposition 
is to show that
\beq\bald \label{s7.2}
\int_{\R^d} {I }v(x,x-y)\rho_\eps(y)dy&=
\int_{\R^d}(v_\eps(x+j(x,z))-v_\eps(x)-1_B(z)\du{Dv_\eps(x),j(x,z)})\nu(dz)
\\&=\II  v_\eps(x),
\eald
\eeq
where 
\[
v_\eps(x):=(v*\rho_\eps)(x)=\int_{\R^d}v(x-y)\rho_\eps(y)dy.
\]
Clearly, $v_\eps\in \C^\infty(\T^d)$. 

To check \eqr{s7.2}, we fix $x\in\R^d$. Observe that 
\[
f: (\xi,z)\mapsto v(\xi+j(x,z))-v(\xi)-\1_B(z)\du{Dv(\xi),j(x,z)}
\]
is measurable with respect to the $\sigma$-algebra 
$\mathcal{L}(\R^d)\otimes \mathcal{B}(\R^d)$ (where $\mathcal{L}(\R^d)$ stands for the Lebesgue 
$\sigma$-algebra on $\R^d$).  
If $|z|\geq 1$, then
\[
f(\xi,z)=v(\xi+j(x,z))-v(\xi)\geq -2\|v\|_\infty.
\]
If $|z|<1$, then, since 
\[
v(\xi+\eta)-v(\xi)\geq \du{Dv(\xi), \eta}-C_v|\eta|^2
\qquad\FORALL \xi,\eta\in\R^d
\]
for some constant $C_v\geq 0$ by semiconvexity of $v$, we have
\[
f(\xi,z)\geq -C_v|j(x,z)|^2.
\]
If we set 
\[
g(z)=2\|v\|_\infty\1_{\R^d\setminus B}(z)+ C_v|j(x,z)|^2\1_B(z),
\]
we infer from the above inequalities that $f(\xi,z)+g(z)\geq 0$ in $\R^d\tim\R^d$. 
By Tonelli's Theorem, we have
\begin{flalign*}
\int_{B_\eps\times\R^d} \Big(f(x-y,z)+g(z) \Big)\, \rho_\eps(y) dy&\otimes\nu(dz)
=\int_{B_\eps}\left(\int_{\R^d}(f(x-y,z)+g(z))\nu(dz)\right)\rho_\eps(y) dy
\\
&=
\int_{\R^d}\left(\int_{B_\eps}(f(x-y,z)+g(z))\rho_\eps(y) dy\right)\nu(dz)
=:I (x),
\end{flalign*} 
where the double integral appearing above makes sense as a nonnegative number. 
We note that 
\[
I (x)=\int_{\R^d}\Big (v_\eps(x+j(x,z))-v_\eps(x)-\1_B(z)\du{Dv_\eps(x),j(x,z)}
+g(z) \Big)\nu(dz).
\]
Since $g\in L^1(\R^d; \nu)$ by assumptions {\bf (N)} and {\bf (J1)}, we furthermore have 
\[
\int_{B_\eps}\int_{\R^d}g(z)\nu(dz)\rho_\eps(y)dy=\int_{\R^d}\int_{B_\eps}g(z)\rho_\eps(y)dy\nu(dz),
\]
and hence
\[\bald
\int_{B_\eps}&\left(\int_{\R^d}f(x-y,z)\nu(dz)\right)\rho_\eps(y)dy 
\\&=\int_{\R^d}\Big(v_\eps(x+j(x,z)-v_\eps(x)-\1_B(z)\du{Dv_\eps(x),j(x,z)}\Big)\,\nu(dz)=\II v_\eps(x). 
\eald
\]
Since $v$ is a subsolution to \eqr{eq HJ a} and is semiconvex, by Proposition \ref{prop pointwise subsol} we have
\[
\gl v+H(x,Dv(x))\leq \II v(x)\qquad \text{for a.e.}\  x\in \R^d.  
\] 
For $x\in\R^d$ and $y\in B_\eps$, we observe, by {\bf (J3)}, that
\[\bald
\II v(x-y)-I v(x,x-y)
&=
\int_{\R^d}
\Big(
v(x-y+j(x-y,z))-v(x-y+j(x,z))
\\
&\quad -
\1_B(z)\du{Dv(x-y),j(x-y,z)-j(x,z)}\Big)\,
\nu(dz)
\\&\leq 2\int_{\R^d}\|Dv\|_\infty|j(x-y,z)-j(x,z)|\nu(dz)
\\&\leq 2C_0\|Dv\|_\infty|y|\leq 2C_0\|Dv\|_\infty \eps 
\eald\]
If we set
\[
\go(r)=\sup\{|H(x,p)-H(y,p)|\mid |p|\leq \|Dv\|_\infty, |x-y|\leq r\},
\]
then, by \eqr{s7.2},
\[
\bald
\gl v_\eps(x)+H(x,Dv_\eps(x))&\leq \go(\eps)+\int_{B_\eps}I v(x,x-y)\rho_\eps(y)dy+2C_0\|Dv\|_\infty\eps
\\&= \II v_\eps(x)+\go(\eps)+2C_0\|Dv\|_\infty \eps. 
\eald
\]
Let $\delta>0$ and choose $\eps=\eps(\delta)>0$ such that 
\[
\go(\eps)+2C_0\|Dv\|_\infty \eps<\delta
\qquad
\hbox{and}
\qquad
\|v-v_\eps\|_\infty<\delta.
\]
The assertion follows by setting $u_\delta:=v_{\eps(\delta)}$. 
\eproof

Let $Q$ be a compact subset of $\R^d$ and set $K:=\M\times Q$. We proceed to explain how, with the aid of Theorem \ref{thm smooth approximation}, the approach that was taken in Section \ref{sec.duality} could be modified by minimizing the integral of $\phi\in\C(K)$ over the class of closed measures. The definition of 
closed measure on $K$ associated with $(z,\gl)\in \T^d\tim[0,+\infty )$ is the following.

\begin{definition}\label{def closed measure I}
Let  $(z,\gl)\in \T^d\tim[0,+\infty )$ be fixed.
A {\em closed measure} $\mu$ on $K$ associated with $(z,\gl)$ is a Borel probability measure on $K$ satisfying  
\[
\int_{K} \bigg(\gl \psi(x)-\II \psi(x)+\langle \xi, D\psi(x)\rangle \bigg)\mu(dx d\xi)
=\gl \psi(z)\quad\hbox{for all $\psi\in \C^2(\T^d)$}. 
\]
We denote by $\frak{C}_K(z,\gl)$ the space of all closed measures on $K$ associated with $(z,\gl)$.  Note that, when $\gl=0$, this set is actually independent of $z$. 
\end{definition}

The relation between closed measures and the probability measures $\cG'(z,\gl)$ introduced in Section \ref{sec.duality} by duality is the following. 

\begin{lemma}\label{lemma closed measure} 
Let $\lambda \geq 0$ and $z\in\M$. Then $\cG'(z,\gl)\subseteq \frak{C}_K(z,\gl)$.
\end{lemma}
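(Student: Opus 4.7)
The plan is to show that any $\mu\in\cG'(z,\gl)$ satisfies the closedness identity against an arbitrary test function $\psi\in\C^2(\M)$. The key idea is that, given such a $\psi$, one can tailor a continuous function $\phi$ on $K$ so that the pair $(\phi,\psi)$ belongs to $\cF(\gl)$ \emph{by construction}, and the integral identity characterizing closedness is then extracted by pairing $\phi-\gl\psi(z)$ against $\mu$.

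Concretely, for a fixed $\psi\in\C^2(\M)$ I would set
\[
\phi_\psi(x,\xi):=\lan\xi,D\psi(x)\ran+\gl\psi(x)-\II\psi(x),\qquad (x,\xi)\in K.
\]
This lies in $\C(K)$, since $D\psi$ is continuous and $\II\psi\in\C(\M)$ by Lemma \ref{lemma I well defined}. One then computes directly
\[
H_{\phi_\psi}\bigl(x,D\psi(x)\bigr)=\max_{\xi\in Q}\bigl(\lan\xi,D\psi(x)\ran-\phi_\psi(x,\xi)\bigr)=-\gl\psi(x)+\II\psi(x),
\]
so that $\psi$ satisfies $\gl\psi-\II\psi+H_{\phi_\psi}(\cdot,D\psi)=0$ pointwise, hence in the viscosity sense (a $\C^2_{\mathrm{b}}$ function is a viscosity subsolution iff the pointwise inequality holds: at a local maximum of $\psi-\varphi$ with $\varphi\in\C^2_{\mathrm b}$ one has $D\psi=D\varphi$ and, integrating the pointwise bound $\varphi(\cdot+j)-\varphi\geq\psi(\cdot+j)-\psi$ against $\nu$, $\II\varphi\geq\II\psi$ at the touching point). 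Therefore $(\phi_\psi,\psi)\in\cF(\gl)$ and $\phi_\psi-\gl\psi(z)\in\cG(z,\gl)$. Pairing against the probability measure $\mu\in\cG'(z,\gl)$ yields
\[
\int_K\Big(\gl\psi(x)-\II\psi(x)+\lan\xi,D\psi(x)\ran\Big)\,\mu(dxd\xi)\geq \gl\psi(z).
\]

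For the reverse inequality I would simply repeat the argument with $-\psi$ in place of $\psi$: since the recipe $\psi\mapsto\phi_\psi$ is linear, one has $\phi_{-\psi}=-\phi_\psi$, so $(-\phi_\psi,-\psi)\in\cF(\gl)$ and consequently $-\phi_\psi-\gl(-\psi)(z)=-\phi_\psi+\gl\psi(z)\in\cG(z,\gl)$. Pairing with $\mu$ reverses the previous inequality and hence forces equality. Since $\psi\in\C^2(\M)$ was arbitrary, this is exactly the definition of $\mu\in\frak{C}_K(z,\gl)$.

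The construction is essentially a symmetric pair of applications of the defining dual inequality for $\cG'(z,\gl)$, so I do not anticipate any real obstacle. The only point that deserves a brief verification is the equivalence between the pointwise and viscosity notions of subsolution for $\C^2_{\mathrm b}$ functions against the nonlocal operator $\II$ flagged above, which runs in complete analogy with the classical second-order case thanks to the monotonicity of $\II$ at extrema of the tested difference.
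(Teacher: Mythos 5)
Your proposal is correct and follows the same route as the paper: you construct the same function $\phi_\psi(x,\xi)=\gl\psi(x)-\II\psi(x)+\lan\xi,D\psi(x)\ran$, observe that both $(\phi_\psi,\psi)$ and $(-\phi_\psi,-\psi)$ lie in $\cF(\gl)$, and pair the two resulting elements of $\cG(z,\gl)$ against $\mu$ to force equality. Your extra remark spelling out why a $\C^2_{\mathrm b}$ pointwise subsolution is a viscosity subsolution (using $D\psi=D\varphi$ and $\II\psi\leq\II\varphi$ at a global touching point) is just the verification the paper leaves as ``easily checked.''
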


\bproof
Let us pick $\psi\in\C^2(\M)$ and set 
\[
\phi(x,\xi):=\gl \psi(x)-\II\psi(x)+\langle \xi,D\psi(x)\rangle
\qquad
\FORALL (x,\xi)\in K. 
\] 
It is easily checked that $(\phi,\psi)$ and $(-\phi,-\psi)$ both belong to $\cF(\gl)$. For every $\mu\in\cG'(z,\gl)$ we thus have $\langle \mu, \phi \rangle\geq \gl\psi(z)$ and $\langle \mu, -\phi \rangle\geq -\gl\psi(z)$, yielding $\langle \mu, \phi \rangle= \gl\psi(z)$. 
\eproof

The following holds. 

\begin{lemma}\label{lem.ma25} Let $\lambda \geq 0$, $z\in\M$ and $(\phi,u)\in\cF (\gl)$. For any 
$\mu\in\frak{C}_K(z,\gl)$ we have
\[ 
\gl u(z)\leq \int_K \phi(x,\xi)\mu(dxd\xi). 
\]
\end{lemma}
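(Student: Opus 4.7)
The plan is to use Theorem \ref{thm smooth approximation} to replace $u$ with a smooth approximate subsolution, for which we can test the closed-measure identity pointwise against arbitrary test vectors $\xi\in Q$, and then pass to the limit.

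First, since $H_\phi$ satisfies \textbf{(H1)}--\textbf{(H2)} (as noted just after the definition \eqref{def H phi}), Theorem \ref{thm smooth approximation} applied to the subsolution $u\in\C(\M)$ of \eqref{eq HJ phi} yields a sequence $(u_n)_n\subset \C^\infty(\M)$ with $u_n\to u$ uniformly on $\M$ and
\[
\gl u_n(x)-\II u_n(x)+H_\phi(x,Du_n(x))\leq \tfrac{1}{n}\qquad\FORALL x\in\M.
\]
Since $H_\phi(x,Du_n(x))\geq \langle Du_n(x),\xi\rangle-\phi(x,\xi)$ for every $\xi\in Q$, we obtain the pointwise inequality
\[
\gl u_n(x)-\II u_n(x)+\langle Du_n(x),\xi\rangle - \phi(x,\xi)\leq \tfrac{1}{n}\qquad\FORALL (x,\xi)\in K.
\]

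Next, I would integrate this inequality against $\mu\in \frak{C}_K(z,\gl)$. Since $u_n\in \C^\infty(\M)$, Definition \ref{def closed measure I} applies with $\psi=u_n$ to give
\[
\int_K\Big(\gl u_n(x)-\II u_n(x)+\langle Du_n(x),\xi\rangle\Big)\,\mu(dxd\xi)=\gl u_n(z).
\]
Combining the last two displays yields
\[
\gl u_n(z) - \int_K \phi(x,\xi)\,\mu(dxd\xi)\leq \tfrac{1}{n}.
\]

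Finally, letting $n\to+\infty$, the uniform convergence $u_n\to u$ in particular gives $u_n(z)\to u(z)$, and we conclude
\[
\gl u(z)\leq \int_K\phi(x,\xi)\,\mu(dxd\xi),
\]
as required. The only delicate point is the invocation of Theorem \ref{thm smooth approximation}: it requires the stronger jump hypothesis \textbf{(J3)} of this appendix, but this is precisely the setting in which the present lemma is stated. All other ingredients (the regularity of $u_n$ making $\II u_n$ well defined via Lemma \ref{lemma I well defined}, and the continuity of $x\mapsto \langle Du_n(x),\xi\rangle$) are automatic from the smoothness of $u_n$ and the compactness of $K$, so the integration against $\mu$ and the limit passage present no additional difficulty.
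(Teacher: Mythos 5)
Your proof is correct and follows essentially the same route as the paper's: invoke Theorem \ref{thm smooth approximation} to get smooth approximate subsolutions $u_n$, drop to the pointwise inequality on $K$ via the definition of $H_\phi$, integrate against $\mu$ using the closed-measure identity with $\psi=u_n$, and pass to the limit. You merely make explicit the two steps the paper leaves implicit (the Fenchel inequality for $H_\phi$ and the appeal to Definition \ref{def closed measure I}), which is a sound elaboration, not a different argument.
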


\bproof According to Theorem \ref{thm smooth approximation}, there exists a sequence $(u_n)_{n\in\N}$ in $
\C^2(\T^d)$ uniformly converging to $u$ on $\M$  such that 
\[
\gl u_n(x)-\II u_n(x)+H_\phi(x,Du_n(x))\leq 1/n\ \ \ \FORALL x\in\R^d.
\]
Integrating with respect to a measure $\mu\in\frak{C}_K(z,\gl)$ the inequality
\[
\gl u_n(x)-\II u_n(x)+\xi\cdot Du_n(x)-\phi(x,\xi)\leq 1/n\ \ \ \FORALL (x,\xi)\in K=\R^d\tim Q,
\]
we obtain
\[\bald
1/n&\geq -\int_K \phi(x,\xi)\,\mu(dxd\xi)+\int_K(\gl u_n(x)-\II u_n(x)+\xi\cdot Du_n(x)) \,\mu(dxd\xi)
\\&=-\int_K \phi(x,\xi)\,\mu(dxd\xi)+\gl u_n(z).
\eald
\]
By sending $n\to +\infty$ we get
\[
\gl u(z)\leq \int_K\phi(x,\xi)\mu(dxd\xi).\qedhere 
\]
\eproof
\medskip

In view of Lemmas \ref{lemma closed measure} and \ref{lem.ma25} and of Theorems \ref{thm discounted Mather measures} and \ref{thm Mather measures} we get the following result. 

\begin{proposition}\label{prop closed measures minimization}
For every $\lambda \geq 0$ and $z\in\M$, the following holds:
\[
\min_{\mu\in{\cGprime}(z,\lambda)}\int_K \phi(x,\xi)\,\mu(dxd\xi)
=
\min_{\mu\in \frak{C}_K(z,\gl)}\int_K \phi(x,\xi)\,\mu(dxd\xi).
\]
\end{proposition}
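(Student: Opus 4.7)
The plan is to split the equality into two inequalities and handle the cases $\lambda>0$ and $\lambda=0$ separately, in each case using the explicit identification of the minimum on the $\cG'$-side provided by Theorems \ref{thm discounted Mather measures} and \ref{thm Mather measures}.

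One inequality is immediate: by Lemma \ref{lemma closed measure} we have $\cG'(z,\gl)\subseteq\frak{C}_K(z,\gl)$, so minimizing the integral of $\phi$ over the larger set $\frak{C}_K(z,\gl)$ yields a value no larger than minimizing over $\cG'(z,\gl)$. The nontrivial direction is to show that $\int_K\phi\,d\mu$ is bounded below by the $\cG'$-minimum for every $\mu\in\frak{C}_K(z,\gl)$, which is exactly where Lemma \ref{lem.ma25} (relying on the smooth-subsolution approximation of Theorem \ref{thm smooth approximation}) is used.

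For $\lambda>0$, let $u_\lambda\in\C(\M)$ be the unique solution of \eqref{eq HJ phi}. By Theorem \ref{thm discounted Mather measures}, the minimum of $\int_K\phi\,d\mu$ over $\cG'(z,\gl)$ equals $\gl u_\gl(z)$ and is attained. Since $(\phi,u_\gl)\in\cF(\gl)$, Lemma \ref{lem.ma25} gives
\[
\int_K\phi(x,\xi)\,\mu(dxd\xi)\geq \gl u_\gl(z)\qquad\text{for every }\mu\in\frak{C}_K(z,\gl),
\]
which yields the reverse inequality and shows the min on $\frak{C}_K(z,\gl)$ is attained (namely, by any minimizer on $\cG'(z,\gl)$, which belongs to $\frak{C}_K(z,\gl)$).

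For $\lambda=0$, Theorem \ref{thm Mather measures} tells us that the minimum over $\cG'(0)$ equals $-c_\phi$ and is attained. To derive the matching lower bound for any $\mu\in\frak{C}_K(0)$, fix $\eps>0$. By Lemma \ref{lemma phi+c boundary point}, $\phi+c_\phi+\eps\in\cG(0)$, so there exists $u_\eps\in\C(\M)$ with $(\phi+c_\phi+\eps,u_\eps)\in\cF(0)$. Applying Lemma \ref{lem.ma25} with $\gl=0$ to this pair, and using that $\mu$ is a probability measure on $K$, gives
\[
0\leq \int_K\bigl(\phi(x,\xi)+c_\phi+\eps\bigr)\,\mu(dxd\xi)=\int_K\phi(x,\xi)\,\mu(dxd\xi)+c_\phi+\eps.
\]
Letting $\eps\to 0^+$ we obtain $\int_K\phi\,d\mu\geq -c_\phi$, which completes the proof.

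The proof is essentially an assembly of results already established, so I do not expect a serious obstacle; the only subtle point is the $\lambda=0$ step, where Lemma \ref{lem.ma25} alone gives only the trivial bound $0\leq\int_K\phi\,d\mu$ when applied to $(\phi,u)$ directly, so one must shift $\phi$ by $c_\phi+\eps$ to bring it into $\cG(0)$ before invoking the lemma, and then let $\eps\to 0$.
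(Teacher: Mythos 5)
Your proof is correct and takes essentially the same approach as the paper: the paper's own proof is a one-line citation of Lemmas \ref{lemma closed measure} and \ref{lem.ma25} and Theorems \ref{thm discounted Mather measures} and \ref{thm Mather measures}, which you have simply unpacked. Your handling of the $\gl=0$ case is the right one — shifting to $(\phi+c_\phi+\eps,u_\eps)\in\cF(0)$ via Lemma \ref{lemma phi+c boundary point} before invoking Lemma \ref{lem.ma25} is needed precisely because a pair $(\phi,u)\in\cF(0)$ may not exist when $c_\phi>0$ (and even when it does, the lemma would then give only the weaker bound $\int_K\phi\,d\mu\geq 0$).
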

\end{appendix}

\bibliography{discount}
\bibliographystyle{siam}

\bye